\newtheorem{thm}{Theorem}[section]
\newtheorem{lem}[thm]{Lemma}
\newtheorem{prop}[thm]{Proposition}
\newtheorem{defn}[thm]{Definition}
\newtheorem{rmk}{Remark}
\numberwithin{equation}{section}
\newcommand{\bel}{\begin{equation} \label}
\newcommand{\ee}{\end{equation}}
\def\beq{\begin{equation}}
\def\eeq{\end{equation}}
\newcommand{\bea}{\begin{eqnarray}}
\newcommand{\eea}{\end{eqnarray}}
\newcommand{\beas}{\begin{eqnarray*}}
\newcommand{\eeas}{\end{eqnarray*}}
\newcommand{\pd}{\partial}
\newcommand{\wh}{\widehat}
\newcommand{\te}{\theta}
\newcommand{\re}{\mathfrak R}
\newcommand{\R}{\mathbb{R}}
\newcommand{\C}{\mathbb{C}} 
\newcommand{\N}{\mathbb{N}} 
\newcommand{\al}{\alpha}
\newcommand{\be}{\beta}
\newcommand{\Om}{\Omega}
\newcommand{\cA}{\mathcal{A}}
\newcommand{\cB}{\mathcal{B}}
\newcommand{\cS}{\mathcal{S}}
\def\epsilon{\varepsilon}
\def\phi {\varphi}
\providecommand{\abs}[1]{\left\lvert#1\right\rvert}
\providecommand{\norm}[1]{\left\lVert#1\right\rVert}
\renewcommand{\leq}{\leqslant}
\renewcommand{\geq}{\geqslant}
\providecommand{\abs}[1]{\left\lvert#1\right\rvert}
\providecommand{\norm}[1]{\left\lVert#1\right\rVert}
\title[Equivalence of definitions of solutions]
{\bf Equivalence of definitions of solutions for some class of fractional diffusion equations }
\author{Yavar Kian}
\address{Aix Marseille Univ, Université de Toulon, CNRS, CPT, Marseille, France}
\email{yavar.kian@univ-amu.fr}
\date{}
\begin{document}

\begin{abstract} We study the unique existence of weak solutions for  initial boundary value problems associated with different class of fractional diffusion equations including variable order, distributed order and multiterm fractional diffusion equations. So far, different definitions of weak solutions have been considered  for these class of problems. This includes definition of solutions in a variational sense and definition of solutions from properties of their Laplace transform in time. The goal of the present article is to unify these two approaches by showing the equivalence of these two definitions. Such property allows also to show that the weak solutions under consideration combine the advantage of these two class of solutions which include representation of solutions by a Duhamel type of formula, suitable properties of Laplace transform of solutions, resolution of the equation in the sense of distributions and explicit link with the initial condition. 

\end{abstract}

\maketitle


\section{Introduction}

\subsection{Settings}
\label{sec-settings}

Let $\Omega$ be a bounded and connected open subset of $\R^d$, $d \geq 2$, 
with Lipschitz  boundary $\partial \Omega$. 
Let $a:=(a_{i,j})_{1 \leq i,j \leq d} \in L^\infty(\Omega;\R^{d^2})\cap H^1(\Omega;\R^{d^2})$
be symmetric, that is 
$$ 
a_{i,j}(x)=a_{j,i}(x),\ x \in \Omega,\ i,j = 1,\ldots,d, 
$$
and fulfill the ellipticity condition: there exists a constant
$c>0$ such that
\bel{ell}
\sum_{i,j=1}^d a_{i,j}(x) \xi_i \xi_j \geq c |\xi|^2, \quad 
\mbox{for a.e. $x \in \Omega,\ \xi=(\xi_1,\ldots,\xi_d) \in \R^d$}.
\ee
Assume that $q \in L^{\frac{d}{2}}(\Omega)$ is non-negative and define the operator $\cA$ by
$$ 
\cA u(x) :=-\sum_{i,j=1}^d \partial_{x_i} 
\left( a_{i,j}(x) \partial_{x_j} u(x) \right)+q(x)u(x),\  x\in\Omega. 
$$ 
We set also $\rho \in L^\infty(\Omega)$ obeying
\bel{eq-rho}
 0<c_0 \leq\rho(x) \leq C_0 <+\infty,\ x \in \Omega. 
\ee

From now on we set $\R_+=(0,+\infty)$ and we introduce  the  function $K\in L^1_{loc}(\R_+;L^\infty(\Omega))\cap C^\infty(\R_+;L^\infty(\Omega))$ satisfying th following condition 
\bel{K} \inf\{\tau>0:\  e^{-\tau t}K\in L^1(\R_+;L^\infty(\Omega))\}=0.\ee
Then, we define the operator $I_K$ by
$$I_Kg(t,x)=\int_0^tK(t-s,x)g(s,x)ds,\quad g\in L^1_{loc}(\R_+;L^2(\Omega)),\ x\in\Omega,\ t\in\R_+.$$
We introduce the Caputo and Riemann-Liouville fractional derivative with kernel $K$ as follows
$$\partial^{K}_tg(t,x)=I_K\partial_tg(t,x),\quad D^{K}_tg(t,x)=\partial_tI_Kg(t,x),\quad g\in W^{1,1}_{loc}(\R_+;L^2(\Omega)),\ x\in\Omega,\ t\in\R_+.$$
In the present article we consider the following initial boundary value problem (IBVP):
\bel{eq1}
\left\{ \begin{array}{rcll} 
(\rho(x) \partial^{K}_t+\cA ) u(t,x) & = & F(t,x), & (t,x)\in 
\R_+\times\Omega,\\
 u(t,x) & = & 0, & (t,x) \in \R_+\times\partial\Omega, \\  
 u(0,x) & = & u_0(x), & x \in \Omega.
\end{array}
\right.
\ee
Namely, for different values of the kernel $K$, corresponding to variable order, distributed order and multiterm fractional diffusion equations, we prove the existence of weak solutions of \eqref{eq1} in the sense of a definition involving the Riemann-Liouville fractional derivative $D^{K}_t$. In addition, we would like to prove that this unique weak solution is described by a suitable Duhamel type of formula and its Laplace transform in time  has the expected  properties for such equations. Our goal is to unify  the two different main approaches considered so far for defining solutions of \eqref{eq1}. That is the definition of solutions of \eqref{eq1} in a variational sense involving Riemann-Liouville fractional derivative (see e.g. \cite{EK,Z}) and the definition of solutions in term of Laplace transform (see e.g. \cite{KSY,KY1,KY2,LKS}). 
\subsection{Definitions of weak and Laplace-weak solutions}
Before stating our results, we give the definition  of solutions under consideration in the present article. Inspired by \cite{EK,Z}, 
we give the definition of weak solutions of this initial boundary value problem (IBVP in short) \eqref{eq1} as follows.
\begin{defn}\label{d1} \emph{(Weak solution)}
Let the coefficients  in \eqref{eq1} satisfy
\eqref{ell}--\eqref{eq-rho}. We say that
$u\in L_{loc}^1(\R_+;L^2(\Omega))$ is a weak solution to \eqref{eq1} if it satisfies
the following conditions.
\begin{enumerate}
\item[{\rm(i)}] The following identity
\begin{equation}\label{d1a}
\rho( x)D_t^K[u-u_0](t, x)+\mathcal A u(t, x)=F(t,x),\quad  t\in\R_+,\ x\in\Omega
\end{equation}
holds true in the sense of distributions in $\R_+\times\Omega$.
\item[{\rm(ii)}] We have $I_K [u-u_0]\in W_{loc}^{1,1}(\R_+;D'(\Omega))$ and the following initial condition
\begin{equation}\label{d1b}
 I_K[u-u_0](0, x)=0,\quad  x\in\Omega,
\end{equation}
is fulfilled. 
\item[{\rm(iii)}] We have
$$
p_0=\inf\{\tau>0:\  e^{-\tau t}u\in L^1(\R_+;L^2(\Omega))\}<\infty
$$
and there exists $p_1\ge p_0$ such that for all $p\in\mathbb C$ satisfying $\re p>p_1$ we have
$$
\wh u(p,\,\cdot\,):=\int_0^\infty e^{-p t}u(t,\,\cdot\,)d t\in H^1_0(\Omega)
$$
\end{enumerate}
\end{defn}

\begin{rmk} The  conditions in Definition \ref{d1} describe the different aspects of the IBVP \eqref{eq1}. Namely, condition (i) is associated with the equation in \eqref{eq1}, condition (ii) describes the link with the initial condition of \eqref{eq1} and condition (iii) gives the boundary condition of \eqref{eq1}. Let us also observe that in the spirit of the works \cite{EK,Z} \emph{(}see also \cite{KRY,KuY,P}\emph{)}, we use in the equation \eqref{d1a} the fact that for $u\in W^{1,1}_{loc}(\R_+;L^2(\Omega))$ we have $\partial_t^K u=D_t^K[u-u(0,\cdot)]$. In that sense the expression $\partial_t^K u$ in \eqref{eq1} can be defined in a more general context by considering instead the expression $D_t^K[u-u(0,\cdot)]$.

\end{rmk}
In the present article we study the unique existence of a weak solution of the IBVP \eqref{eq1} in the sense of Definition \ref{d1} in three different context:\\
1) Variable order fractional diffusion equations where for $\alpha\in L^\infty(\Omega)$ satisfying
\bel{alpha}
0 < \alpha_0 \leq \alpha(x) \leq \alpha_M<1,\quad \alpha_M<2\alpha_0,\ x \in \Omega, 
\ee
we fix 
\bel{Kvariable} K(t,x)=\frac{t^{-\alpha(x)}}{\Gamma(1-\alpha(x))},\quad t\in\R_+,\ x\in\Omega.\ee
2) Distributed order fractional diffusion equations where for a non-negative weight function $\mu \in L^\infty(0,1)$, obeying the following condition:
\bel{mu}
\exists \alpha_0 \in(0,1),\ \exists \epsilon \in (0,\alpha_0),\ \forall \alpha \in (\alpha_0-\epsilon,\alpha_0),\ \mu(\alpha) \ge \frac{\mu(\alpha_0)}{2}>0,
\ee
we define
\bel{Kdistributed} K(t,x)=\int_0^1 \mu(\alpha)\frac{t^{-\alpha}}{\Gamma(1-\alpha)}d\alpha,\quad t\in\R_+,\ x\in\Omega.\ee
3) Multiple order fractional diffusion equations where, for $N\in\mathbb N$, $1<\alpha_1<\ldots<\alpha_N<1$ and for $\rho_j\in L^\infty(\Omega)$, $j=1,\ldots,N$, satisfying \eqref{eq-rho} with $\rho=\rho_j$, we fix
\bel{Kmultiple} K(t,x)=\sum_{j=1}^N\rho_j(x) \frac{t^{-\alpha_j}}{\Gamma(1-\alpha_j)},\quad t\in\R_+,\ x\in\Omega.\ee

Let us also recall an alternative definition of weak solutions of \eqref{eq1} defined in terms of Laplace transform (see e.g. \cite{KSY,KY1,KY2,LKS}). In order to distinguish these two definitions of solutions, in the remaining part of this article, this class of weak solutions will be called Laplace-weak solutions. From now on and in all the remaining parts of this article, we  denote by $\mathcal J$ the set of functions $F\in L^1_{loc}(\R_+;L^2(\Omega))$ for which  there exists $J\in\mathbb N$ such that
$t\mapsto(1+t)^{-J}F(t,\cdot)\in L^1(\R_+;L^2(\Omega))$.
Following \cite{KSY,KY1,KY2,LKS}, we give the following definition of Laplace-weak solutions of \eqref{eq1}.

\begin{defn}\label{d2}\emph{(Laplace-weak solution)} Assume that $K$ is given by either of the three expressions \eqref{Kvariable}, \eqref{Kdistributed} and \eqref{Kmultiple}. Let $F\in \mathcal J$ and let the coefficients and the source term in \eqref{eq1} satisfy
\eqref{ell}--\eqref{eq-rho}. We say that
$u\in L_{loc}^1(\R_+;L^2(\Omega))$ is a Laplace-weak solution to \eqref{eq1} if it satisfies
the following conditions.
\begin{enumerate}
\item[{\rm(i)}]
$\inf\{\tau>0:\  e^{-\tau t}u\in L^1(\R_+;L^2(\Omega))\}=0$.
\item[{\rm(ii)}] There exists $p_1\ge 0$ such that for all $p\in\mathbb C$ satisfying $\re p>p_1$, the Laplace transform $\wh u(p,\cdot)$ of
$u(t,\cdot\,)$ with respect to $t$  is lying in $H^1_0(\Omega)$ and it solves the following boundary value
problem
\begin{equation}\label{d2a}
\begin{cases}
(\mathcal A+\rho p\wh K(p,\cdot))\wh u(p,\cdot)=\wh F(p,\cdot)+\rho\wh K(p,\cdot)u_0 & \mbox{in }\Omega,\\
\wh u(p,\cdot\,)=0 & \mbox{on }\partial\Omega.
\end{cases}
\end{equation}
Note that here $\wh K(p,\cdot)$ is well defined for all $p\in\mathbb C$ satisfying $\re p>0$ thanks to condition \eqref{K}.
\end{enumerate}
\end{defn}
\begin{rmk} In Definition \ref{d2} all the  properties of the IBVP \eqref{eq1} are described by the boundary value problem \eqref{d2a}. 
Indeed, for a solution $u$ of \eqref{eq1} satisfying the condition
$$p_2=\inf\{\tau>0:\  e^{-\tau t}u\in W^{1,1}(\R_+;L^2(\Omega))\}<\infty,$$
we have 
$$\wh{\partial^{K}_t u}(p,\cdot\,)=\wh K(p,\cdot)[p\wh u(p,\cdot)-u(0,\cdot)]=p\wh K(p,\cdot))\wh u(p,\cdot)-\wh K(p,\cdot)u_0,\quad p\in\mathbb C,\ \re p>p_2.$$
Therefore, applying the Laplace transform in time to the equation of \eqref{eq1} we deduce that, for all $p\in\mathbb C$ satisfying $\re p>p_1$, $\wh u(p,\cdot)$ is the unique solution of \eqref{d2a}. Combining this with the uniqueness and the analyticity of the Laplace transform 
we can conclude that such solution of \eqref{eq1} coincides with the Laplace-weak solution of \eqref{eq1}. In that sense, this notion of Laplace-weak solutions allows to define solutions of \eqref{eq1} in terms of properties of their Laplace transform in time.
\end{rmk}
The goal of the present article is to unify these two definitions by proving the equivalence between Definition \ref{d1} and Definition \ref{d2} in some general context with the kernel $K$ given by \eqref{Kvariable}, \eqref{Kdistributed}, \eqref{Kmultiple}. For this purpose,   assuming that $F\in\mathcal J$ and $u_0\in L^2(\Omega)$, we will show the unique existence of Laplace-weak solutions of \eqref{eq1} in the sense of Definition \ref{d2}. After that we prove that the Laplace-weak solutions of \eqref{eq1} coincides with the unique weak solution of \eqref{eq1} in the sense of Definition \ref{d1}. This property shows in particular the  equivalence between these definitions. In addition to this equivalence, we give also a Duhamel type of representation of the weak solutions of \eqref{eq1} taking the form \eqref{sol1}, \eqref{di1} and \eqref{mul}. 

\subsection{Motivations}

Recall that anomalous diffusion in complex media have been intensively studied these last decades in different fields 
 with multiple applications in  geophysics, environmental 
 and biological problems. The diffusion properties of homogeneous media are 
currently modeled, see e.g. \cite{AG,CSLG}, by constant order 
time-fractional diffusion processes where in \eqref{eq1} the kernel $K$ takes the form $\frac{t^{-\beta}}{\Gamma(1-\beta)}$ with a constant values $\beta\in(0,1)$. However, in some complex media, several physical properties lead to more general model involving variable order, distributed order and multiterm fractional diffusion equations. For instance, it has been proved that the presence of heterogeneous regions displays space inhomogeneous variations and the constant order fractional dynamic models are not robust for long times (see \cite{FS}). In this context the variable order time-fractional model, corresponding to kernel $K$ given by \eqref{Kvariable},  is more relevant for describing the space-dependent anomalous diffusion process (see e.g. \cite{SCC}). In this context, several variable order diffusion models have been successfully applied in numerous applications in sciences and engineering, including Chemistry \cite{CZZ}, 
Rheology \cite{SdV}, Biology \cite{GN}, Hydrogeology \cite{AON} and Physics \cite{SS, ZLL}. In the same way, some anomalous diffusion process such as ultra-slow diffusion, where the mean squared variance grows only logarithmically with time, are modeled by  fractional diffusion equations with distributed order fractional derivatives with applications in polymer physics and kinetics of particles (see e.g. \cite{MMPG,MS}). For these different physical models, the goal of the present article is to prove existence of weak solutions of \eqref{eq1} enjoying several important properties such as resolution of the equation in the sense of distributions, suitable Duhamel representation formula and expected properties of the Laplace transform in time of the solutions stated in Definition \ref{d2}.

Beside these physical motivations, our analysis is also motivated by applications in other class of mathematical problems where the Duhamel representation formula, the properties of the Laplace transform in time of solutions as well as the resolution in the sense of distributions of the equation in \eqref{eq1}, stated in \eqref{d1a}-\eqref{d1b}, play an important role. This is for instance the case for several inverse problems (see e.g. \cite{JLLY,JK,KLLY,KLY,KOSY,KSXY,LIY}) as well as  the study of some dynamical properties  (see e.g. \cite{KSS,LLY}),  the derivation of analyticity properties in time of solutions (see e.g. \cite{LLY1}) and the numerical resolution (see e.g. \cite{B,JLSZ}) of these equations. In this context our goal is to exhibit weak solutions that satisfy simultaneously all the above mentioned properties.
 \subsection{Known results}

Recall that the well-posdness of the IBVP \eqref{eq1} has received a lot of attention these last decades among the mathematical community. For constant order fractional diffusion equations, where in \eqref{eq1} the kernel $K$ takes the form $\frac{t^{-\beta}}{\Gamma(1-\beta)}$ with a constant values $\beta\in(0,1)$, several approaches have been considered for defining solutions of \eqref{eq1}. This includes the definition of solutions of \eqref{eq1} in a variational and strong sense considered by \cite{EK,KuY,KRY,KY2,Z}, the definition  of solutions in the mild-sense in \cite{SY} and the definition of solutions by mean of their Laplace transform in time given by \cite{KY1,KY2}. Such analysis includes also the study of the IBVP \eqref{eq1} with a time-dependent elliptic operator as stated in \cite{KuY,KRY,Z}. Several  authors considered also the well-posedness of more general class of diffusion equations. For instance, the analysis of \cite{KJ,Z} in some abstract framework  can be applied to some class of distributed order and multiterm fractional diffusion equation of the form \eqref{eq1} with a kernel $K$ independent of $x$ (for $K$ given by \eqref{Kmultiple} the coefficients $\rho_1,\ldots,\rho_N$ are constants). In the same way, we can mention the work of \cite{KR,LKS} for the study of distributed order fractional diffusion equations and the work of \cite{LHY,LLY2} devoted to the study of well-posedness of multiterm fractional diffusion equations with both constant and variable coefficients $\rho_1,\ldots,\rho_N$ in \eqref{Kmultiple}. To the best of our knowledge, in the article \cite{KSY} one can find the only result available in the mathematical literature devoted to the study of the well-posedness of variable order fractional diffusion equations (the kernel $K$ given by \eqref{Kvariable}) with non-vanishing initial condition and general source term. In this last work, the authors give a definition of solutions in term of Laplace transform comparable to Definition \ref{d2}. As far as we know,  there is no result showing existence of weak solutions of \eqref{eq1} satisfying the properties described by Definition \ref{d1} for variable order fractional diffusion equations.

In all the above mentioned results the authors have either considered a variational definition of solutions comparable to Definition \ref{d1} 
or a definition of solutions in terms of Laplace transform comparable to the Laplace-weak solutions of Definition \ref{d2}. However, as far as we know, there has been no result so far proving the unification of these two definitions of solutions for variable order, distributed order or multiterm fractional diffusion equations.

\subsection{Main results}

The main results of this article state the unique existence of Laplace-weak solutions of \eqref{eq1} in the sense of Definition \ref{d2} as well as the equivalence between Definition \ref{d1} and \ref{d2}  for the weight function $K$ given by \eqref{Kvariable}, \eqref{Kdistributed}, \eqref{Kmultiple} which correspond to variable, distributed order and multiterm  fractional diffusion equations.

For variable order fractional diffusion equations, our result can be stated as follows.

\begin{thm}\label{t1}
Assume that the conditions \eqref{ell}-\eqref{eq-rho} are fulfilled. Let $u_0\in L^2(\Omega)$, $F\in\mathcal J$, $\alpha\in L^\infty(\Omega)$ satisfy \eqref{alpha} and let $K$ be given by \eqref{Kvariable}. Then there exists a unique Laplace-weak solution $u\in L^1_{loc}(\R_+;L^2(\Omega))$ of \eqref{eq1} in the sense of Definition \ref{d2}. Moreover, the Laplace-weak solution $u\in L^1_{loc}(\R_+;L^2(\Omega))$ of \eqref{eq1} is the unique weak solution of \eqref{eq1} in the sense of Definition \ref{d1}. In addition, $u$ is described by a Duhamel type of formula  taking the form \eqref{sol1}.
\end{thm}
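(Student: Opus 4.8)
The plan is to build the solution frequency by frequency in the Laplace variable and then invert. For the variable-order kernel \eqref{Kvariable} one computes $\wh K(p,x)=p^{\alpha(x)-1}$, so that $p\wh K(p,x)=p^{\alpha(x)}$ and the stationary problem \eqref{d2a} reads $(\mathcal A+\rho\, p^{\alpha(\cdot)})\wh u(p,\cdot)=\wh F(p,\cdot)+\rho\, p^{\alpha(\cdot)-1}u_0$. The decisive observation is that for $\re p>0$ and $0<\alpha(x)<1$ one has $\re(p^{\alpha(x)})=|p|^{\alpha(x)}\cos(\alpha(x)\arg p)>0$, since $|\arg p|<\pi/2$ forces $|\alpha(x)\arg p|<\pi/2$. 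I would therefore introduce the sesquilinear form $B_p(v,w)=a(v,w)+\int_\Omega\rho\, p^{\alpha(\cdot)}v\overline w\,dx$, where $a$ is the form associated with $\mathcal A$, coercive on $H^1_0(\Omega)$ by \eqref{ell} and the Poincar\'e inequality, the zeroth-order term $\int q\, v\overline w$ being continuous on $H^1_0(\Omega)$ thanks to $q\in L^{d/2}(\Omega)$ and Sobolev embedding and, being non-negative, harmless for coercivity. The positivity of $\re(p^{\alpha(\cdot)})$ then makes $B_p$ coercive, uniformly on each half-plane $\re p\geq\delta>0$, so Lax--Milgram yields a unique $\wh u(p,\cdot)\in H^1_0(\Omega)$ solving \eqref{d2a}; holomorphy of $p\mapsto p^{\alpha(x)}$ on $\re p>0$ transfers to analyticity of $p\mapsto\wh u(p,\cdot)$.

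Next I would recover the time-dependent solution. The coercivity estimate gives $\|\wh u(p,\cdot)\|_{H^1_0(\Omega)}\leq C\,\|\wh F(p,\cdot)+\rho\, p^{\alpha(\cdot)-1}u_0\|_{H^{-1}(\Omega)}$, which together with the bounds on $\wh F$ coming from $F\in\mathcal J$ produces polynomial-type control of $\|\wh u(p,\cdot)\|_{L^2(\Omega)}$ on each half-plane $\re p\geq\delta$. Combined with analyticity, these bounds allow one to apply a vector-valued Laplace representation theorem, in the spirit of the references quoted for Laplace-weak solutions, to produce a unique $u\in L^1_{loc}(\R_+;L^2(\Omega))$ with Laplace transform $\wh u(p,\cdot)$ and with the subexponential growth $\inf\{\tau:e^{-\tau t}u\in L^1(\R_+;L^2(\Omega))\}=0$ required by condition (i) of Definition \ref{d2}; uniqueness of the Laplace-weak solution is then immediate from injectivity of the Laplace transform.

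To establish the equivalence with Definition \ref{d1}, I would translate \eqref{d2a} back to the time domain. With $g=u-u_0$ one has $\widehat{I_Kg}(p)=\wh K(p)\wh g(p)$ and $\widehat{D^K_tg}(p)=p\,\widehat{I_Kg}(p)-I_Kg(0,\cdot)$, and since $\wh g(p)=\wh u(p)-p^{-1}u_0$, the identity \eqref{d2a} rearranges to $\mathcal A\wh u(p)+\rho\, p\wh K(p)\wh g(p)=\wh F(p)$, that is $\mathcal A\wh u(p)+\rho\,\widehat{D^K_t[u-u_0]}(p)=\wh F(p)$ once $I_K[u-u_0](0,\cdot)=0$ is known. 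Inverting the Laplace transform gives \eqref{d1a}, i.e. condition (i) of Definition \ref{d1}; the membership $I_K[u-u_0]\in W^{1,1}_{loc}(\R_+;D'(\Omega))$ and the initial condition \eqref{d1b} then follow from the decay $p\,\widehat{I_K[u-u_0]}(p)=p^{\alpha(\cdot)}\wh u(p)-p^{\alpha(\cdot)-1}u_0\to0$ as $\re p\to+\infty$ (using that $\wh F(p)\to0$ together with the leading behavior of the resolvent for large $p$), which identifies the trace $I_K[u-u_0](0,\cdot)=\lim_{\re p\to+\infty}p\,\widehat{I_K[u-u_0]}(p)=0$; condition (iii) of Definition \ref{d1} is in turn a consequence of conditions (i)--(ii) of Definition \ref{d2}. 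Conversely, for any weak solution $v$ in the sense of Definition \ref{d1}, taking the Laplace transform of \eqref{d1a} (legitimate thanks to the growth condition (iii)) and using \eqref{d1b} shows that $\wh v(p)$ solves \eqref{d2a} for $\re p$ large; uniqueness in Lax--Milgram forces $\wh v(p)=\wh u(p)$, whence $v=u$ by injectivity of the Laplace transform. This simultaneously yields the coincidence of the two notions and the uniqueness of the weak solution.

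The Duhamel formula \eqref{sol1} would finally be read off by splitting $\wh u(p)=(\mathcal A+\rho\, p^{\alpha(\cdot)})^{-1}\wh F(p)+(\mathcal A+\rho\, p^{\alpha(\cdot)})^{-1}\rho\, p^{\alpha(\cdot)-1}u_0$ and inverting each term, the first factor producing a convolution of $F$ against the associated solution family and the second a term carrying $u_0$. I expect the core difficulty to lie in the second step: since the order $\alpha(x)$ is genuinely variable, $\rho\, p^{\alpha(\cdot)}$ does not commute with $\mathcal A$ and one cannot diagonalize $\mathcal A$ to obtain explicit Mittag-Leffler kernels as in the constant-order case, so the resolvent bounds feeding the inversion must be extracted directly from $B_p$. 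Controlling these bounds uniformly in $p$ — especially in the high-frequency regime where $\arg p\to\pm\pi/2$ and $\cos(\alpha(x)\arg p)$ degenerates as $\alpha(x)\to1$ — is where I expect the structural hypothesis $\alpha_M<2\alpha_0$ in \eqref{alpha} to enter, allowing the spatial variation $p^{\alpha(\cdot)-\alpha_0}$ of the order to be dominated by the base term $p^{\alpha_0}$ in a perturbative comparison.
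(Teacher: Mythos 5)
Your overall architecture (solve the resolvent problem \eqref{d2a} for each $p$, invert the Laplace transform, read off Duhamel, then transfer conditions (i)--(ii) of Definition \ref{d1} back and forth) is the same as the paper's, and several pieces are sound: the computation $\wh K(p,\cdot)=p^{\alpha(\cdot)-1}$, the positivity $\re\bigl(p^{\alpha(x)}\bigr)>0$ for $\re p>0$ (hence solvability of \eqref{d2a} by Lax--Milgram, which is the content of the invertibility the paper imports from \cite[Proposition 2.1]{KSY}), and the uniqueness argument in both directions via injectivity and analyticity of the Laplace transform, which matches Step 1 of the paper's proof. However, there is a genuine gap at the central analytic step, and you have in effect flagged it yourself without closing it. The Lax--Milgram coercivity you obtain is confined to the right half-plane and yields at best $\|(A+\rho p^{\alpha(\cdot)})^{-1}\|_{\mathcal B(L^2(\Omega))}\leq C|p|^{-\alpha_0}$ there; for the $u_0$-term this gives $\|\wh u(p)\|_{L^2}\lesssim |p|^{\alpha_M-\alpha_0-1}$, which is \emph{not} integrable along vertical lines since $\alpha_M\geq\alpha_0$, so no off-the-shelf ``vector-valued Laplace representation theorem'' produces a function $u\in L^1_{loc}(\R_+;L^2(\Omega))$ from these bounds alone. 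What is actually required is the extension of the resolvent estimate into a sector $|\arg p|<\theta$ with $\theta\in(\pi/2,\pi)$ --- where $\re\bigl(p^{\alpha(x)}\bigr)$ may change sign pointwise in $x$ and your coercivity argument breaks down --- in the quantitative form $\|(A+\rho p^{\alpha(\cdot)})^{-1}\|\leq C\max\bigl(|p|^{\alpha_0-2\alpha_M},|p|^{\alpha_M-2\alpha_0}\bigr)$. This is \cite[Proposition 2.1]{KSY} (proved by splitting off the skew-adjoint part, as in the paper's Lemma \ref{l5} for the multiterm case), it is exactly where $\alpha_M<2\alpha_0$ enters (so that $2(\alpha_0-\alpha_M)>-1$ and the deformed contour integrals defining $S_0,S_1$ converge, cf.\ Lemma \ref{l1}), and it is the load-bearing ingredient of the whole theorem. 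Declaring it ``the core difficulty'' at the end of the proof does not discharge it.

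A second, lesser gap: your verification of conditions (i)--(ii) of Definition \ref{d1} proceeds by formally inverting Laplace-transform identities and by identifying the trace $I_K[u-u_0](0,\cdot)$ as $\lim_{\re p\to\infty}p\,\widehat{I_K[u-u_0]}(p)$. This Tauberian identification presupposes the $W^{1,1}_{loc}$ regularity of $I_K[u-u_0]$ that is itself part of what must be proved, and for data only in $\mathcal J\times L^2(\Omega)$ the solution is not regular enough to manipulate $D_t^K$ classically. The paper handles this by a density scheme: for $F_n\in C^\infty_0$ and $u_{0,n}\in C^\infty_0(\Omega)$ one shows $u_n\in C^1([0,\infty);L^2(\Omega))$ (using $\partial_t S_0(t)u_{0,n}=-S_1(t)Au_{0,n}$ and the vanishing of the contour integral of $p^{-1}(A+\rho p^{\alpha(\cdot)})^{-1}$, Lemma \ref{l3}), verifies \eqref{d1a}--\eqref{d1b} exactly, and then passes to the limit in $W^{1,1}(0,1;D(A_*^{-1}))$ using the kernel estimates of Lemma \ref{l1}. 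Some such approximation argument is needed to make your third paragraph rigorous.
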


For distributed order fractional diffusion equations, our result can be stated as follows.

\begin{thm}\label{t2}
Assume that the conditions \eqref{ell}-\eqref{eq-rho} are fulfilled. Let $u_0\in L^2(\Omega)$, $F\in\mathcal J$, $\mu \in L^\infty(0,1)$ be a non-negative function satisfying \eqref{mu} and let $K$ be given by \eqref{Kdistributed}. Then there exists a unique Laplace-weak solution $u\in L^1_{loc}(\R_+;L^2(\Omega))$ of \eqref{eq1} in the sense of Definition \ref{d2}. Moreover, the Laplace-weak solution $u\in L^1_{loc}(\R_+;L^2(\Omega))$ of \eqref{eq1} is the unique weak solution of \eqref{eq1} in the sense of Definition \ref{d1}. In addition, $u$ is described by a Duhamel type of formula  taking the form \eqref{di1}.
\end{thm}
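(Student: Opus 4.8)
The plan is to establish Theorem \ref{t2} by following the same overall architecture that the variable-order case (Theorem \ref{t1}) suggests, but adapting every estimate to the distributed-order kernel \eqref{Kdistributed}. The proof splits naturally into two parts: (a) the unique existence of a Laplace-weak solution in the sense of Definition \ref{d2}, and (b) the equivalence with the weak solution of Definition \ref{d1}. For part (a), I would first analyze the symbol $p\wh K(p,x)$ for $\re p>0$. Since $\wh K(p,x)=\int_0^1\mu(\alpha)p^{\alpha-1}\,d\alpha$ (using $\wh{t^{-\alpha}/\Gamma(1-\alpha)}(p)=p^{\alpha-1}$), the crucial point is to show that $p\wh K(p,x)=\int_0^1\mu(\alpha)p^\alpha\,d\alpha$ has strictly positive real part, uniformly in $x$, when $\re p>0$; here the lower bound \eqref{mu} on $\mu$ near $\alpha_0$ is what guarantees a nondegenerate contribution. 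With this coercivity of the bilinear form associated to $\mathcal A+\rho\,p\wh K(p,\cdot)$ on $H^1_0(\Omega)$, the Lax--Milgram theorem gives a unique $\wh u(p,\cdot)\in H^1_0(\Omega)$ solving \eqref{d2a} for each such $p$, together with resolvent-type bounds $\norm{\wh u(p,\cdot)}_{L^2(\Omega)}\leq C(p)(\norm{\wh F(p,\cdot)}_{L^2(\Omega)}+\norm{u_0}_{L^2(\Omega)})$.

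The next step of part (a) is to recover a genuine time-domain function $u(t,\cdot)$ from the family $p\mapsto\wh u(p,\cdot)$ and to verify condition (i) of Definition \ref{d2}, namely that the abscissa of exponential summability is $0$. I would control the growth of $\norm{\wh u(p,\cdot)}$ as $\abs{p}\to\infty$ and the integrability along vertical lines $\re p=\gamma$ by exploiting the behavior of $p\wh K(p,\cdot)$: the dominant term as $\abs{p}\to\infty$ behaves like $p^{\alpha_0}$-type growth dictated by the upper endpoint of the integral, which yields a gain that makes the inverse Laplace transform (Bromwich integral) converge and produces $u\in L^1_{loc}(\R_+;L^2(\Omega))$ with summability for every $\gamma>0$. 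The spectral decomposition of $\mathcal A$ into eigenpairs $(\lambda_n,\varphi_n)$ is the natural tool here, writing $\wh u(p,\cdot)=\sum_n\frac{\wh F_n(p)+\wh K(p)(u_0)_n}{\lambda_n+p\wh K(p)}\varphi_n$ in the constant-$x$ case and, in the genuinely $x$-dependent situation, using the uniform coercivity bounds instead. This step simultaneously delivers the Duhamel representation \eqref{di1}, since the kernel in the Bromwich integral defines the relaxation/propagator functions appearing there.

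For part (b), the equivalence, I would show the two implications by verifying that the Laplace-weak solution constructed above satisfies (i)--(iii) of Definition \ref{d1}, and conversely that any weak solution has a Laplace transform solving \eqref{d2a}, so that uniqueness forces the two to coincide. The forward direction is the substantive one: from \eqref{d2a} one reads off that $\wh u(p,\cdot)\in H^1_0(\Omega)$, which gives condition (iii) and the boundary behavior; rearranging \eqref{d2a} as $\mathcal A\wh u=\wh F+\rho\wh K(u_0-p\wh u)=\wh F-\rho\,\wh{D_t^K[u-u_0]}$ and inverting the Laplace transform yields \eqref{d1a} in the distributional sense, while the term $\wh K(p,\cdot)u_0$ matching $p\wh K(p,\cdot)\wh u(p,\cdot)$ encodes exactly the vanishing initial condition \eqref{d1b} for $I_K[u-u_0]$. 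The converse uses the remark following Definition \ref{d2} together with analyticity and injectivity of the Laplace transform.

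I expect the main obstacle to be the analysis of the symbol $p\wh K(p,\cdot)$ and the resulting contour estimates in part (a). Unlike the single-power case, $\wh K(p,\cdot)=\int_0^1\mu(\alpha)p^{\alpha-1}\,d\alpha$ is an integral superposition of fractional powers, so establishing sharp sectorial bounds (uniform lower bounds on $\re(p\wh K(p,\cdot))$ for small $\abs{p}$ and the correct decay of $\abs{\lambda_n+p\wh K(p)}^{-1}$ for large $\abs{p}$ and large $n$) requires a careful asymptotic analysis of this integral as $\abs{p}\to0$ and $\abs{p}\to\infty$. The hypothesis \eqref{mu}, guaranteeing that $\mu$ is bounded below on a left neighborhood of $\alpha_0$, is precisely what rescues the low-frequency coercivity, and handling the uniformity in $x\in\Omega$ (recalling that here $K$ is independent of $x$, which simplifies matters compared to Theorem \ref{t1}) is where the delicate work lies. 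Once these symbol estimates are in place, the Lax--Milgram argument, the Bromwich inversion, and the distributional identities of part (b) should follow by the same reasoning as in the proof of Theorem \ref{t1}.
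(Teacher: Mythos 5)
Your overall architecture for part (a) — analysis of the symbol $\vartheta(p)=\int_0^1\mu(\alpha)p^{\alpha}\,d\alpha$, solution of \eqref{d2a} for each fixed $p$, contour/Bromwich inversion producing the Duhamel formula \eqref{di1} — matches the paper's, and your identification of the bound on $\abs{\lambda_n+\vartheta(p)}^{-1}$ as the technical heart is correct: the paper imports exactly the estimate $\abs{\lambda_n+\vartheta(p)}^{-1}\leq C\max(\abs{p}^{-\alpha_0+\epsilon},\abs{p}^{-\alpha_0})$ from \cite[Lemma 2.2]{LKS}, and condition \eqref{mu} is indeed what makes it work. One caveat: your Lax--Milgram coercivity argument lives in the half-plane $\re p>0$, whereas the contour $\gamma(\delta,\theta)$ with $\theta\in(\pi/2,\pi)$ used for the inversion and for the $L^1_{loc}$ bounds of Lemma \ref{l4} requires the resolvent estimate on all of $\C\setminus(-\infty,0]$; positivity of $\re\,\vartheta(p)$ for $\re p>0$ is therefore not sufficient by itself.

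The genuine gap is in part (b), the forward implication, which is the substantive content of the theorem. You write that rearranging \eqref{d2a} and ``inverting the Laplace transform yields \eqref{d1a} in the distributional sense,'' and that the term $\wh K(p,\cdot)u_0$ ``encodes exactly'' the initial condition \eqref{d1b}. As stated this is circular: to identify the inverse Laplace transform of $p\wh K(p)\wh u-\wh K(p)u_0$ with $D_t^K[u-u_0]=\partial_t I_K[u-u_0]$ you must already know that $I_K[u-u_0]\in W^{1,1}_{loc}(\R_+;D'(\Omega))$, and the pointwise evaluation $I_K[u-u_0](0,\cdot)=0$ requires continuity at $t=0$ — neither of which follows from the Laplace-transform identity for a solution that is merely $L^1_{loc}$ in time. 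The paper closes this gap by a regularization argument that your proposal omits: approximate $u_0$ (resp.\ $F$) by $u_{0,n}\in\mathcal C^\infty_0(\Omega)$ (resp.\ $F_n\in\mathcal C^\infty_0(\R_+\times\Omega)$); show via the explicit eigenfunction/contour formulas, Lemma \ref{l3} and the identity $\frac{1}{2i\pi}\int_{\gamma(\delta,\theta)}\frac{dp}{p(\lambda_k+\vartheta(p))}=0$ that $u_n\in W^{1,1}_{loc}(\R_+;L^2(\Omega))$ with $\partial_t u_n=-S_{1,\mu}(t)Au_{0,n}$ and $u_n(0,\cdot)=u_{0,n}$, so that \eqref{d1a}--\eqref{d1b} hold classically for $u_n$; and then pass to the limit in $L^1(0,T_1;D(A_*^{-1}))$ and $W^{1,1}(0,1;D(A_*^{-1}))$ using the bounds of Lemma \ref{l4}. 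Without this step (or an equivalent one), conditions (i) and (ii) of Definition \ref{d1} are asserted rather than proved.
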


For multiterm fractional diffusion equations, our result can be stated as follows.

\begin{thm}\label{t3}
Assume that the conditions \eqref{ell}-\eqref{eq-rho} are fulfilled with $\rho\equiv1$. Let $u_0\in L^2(\Omega)$, $F\in\mathcal J$, $1<\alpha_1<\ldots<\alpha_N<1$ and $\rho_j\in L^\infty(\Omega)$, $j=1,\ldots,N$, satisfying \eqref{eq-rho} with $\rho=\rho_j$, and let $K$ be given by \eqref{Kmultiple}. Then there exists a unique Laplace-weak solution $u\in L^1_{loc}(\R_+;L^2(\Omega))$ of \eqref{eq1} in the sense of Definition \ref{d2}. Moreover, the Laplace-weak solution $u\in L^1_{loc}(\R_+;L^2(\Omega))$ of \eqref{eq1} is the unique weak solution of \eqref{eq1} in the sense of Definition \ref{d1}. In addition, $u$ is described by a Duhamel type of formula  taking the form \eqref{mul}.
\end{thm}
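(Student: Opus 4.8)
The plan is to proceed exactly as for Theorems \ref{t1} and \ref{t2}, the only genuinely new point being the verification that the multiterm kernel \eqref{Kmultiple} satisfies the structural hypotheses exploited there. Its Laplace transform is the explicit finite sum
\[
\wh K(p,x)=\sum_{j=1}^N\rho_j(x)\,p^{\alpha_j-1},\qquad \re p>0,
\]
so that $p\wh K(p,x)=\sum_{j=1}^N\rho_j(x)\,p^{\alpha_j}$. This is analytic on $\{\re p>0\}$, and since $|\arg(p^{\alpha_j})|=\alpha_j|\arg p|$ one has $\re(p^{\alpha_j})>0$ as soon as $|\arg p|<\frac{\pi}{2\alpha_j}$; combined with $\rho_j\geq c_0>0$ this yields $\re\big(p\wh K(p,x)\big)>0$ on the sector $|\arg p|<\frac{\pi}{2\alpha_N}$, whose half-angle strictly exceeds $\frac{\pi}{2}$ because $\alpha_N<1$. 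This enlarged sector of positivity is the feature that will later justify deforming the inversion contour.

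For the existence of the Laplace-weak solution I would solve \eqref{d2a} for each fixed $p$ in this sector by Lax--Milgram. With $\rho\equiv 1$ the sesquilinear form
\[
B_p(v,w)=\int_\Omega a\nabla v\cdot\overline{\nabla w}\,dx+\int_\Omega q\,v\,\overline w\,dx+\int_\Omega p\wh K(p,\cdot)\,v\,\overline w\,dx
\]
on $H^1_0(\Omega)$ is bounded, and taking real parts gives coercivity by combining the ellipticity \eqref{ell}, the sign $q\geq 0$, and the positivity of $\re(p\wh K(p,\cdot))$ just established. This produces a unique $\wh u(p,\cdot)\in H^1_0(\Omega)$ depending analytically on $p$. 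Testing \eqref{d2a} against $\wh u(p,\cdot)$ itself furnishes the resolvent estimates, the leading terms $\rho_N p^{\alpha_N}$ for $|p|$ large and $\rho_1 p^{\alpha_1}$ for $|p|$ small dictating the decay of $\|\wh u(p,\cdot)\|_{L^2(\Omega)}$ that is needed to invert the transform.

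From these estimates $u$ is recovered by inverting the Laplace transform along a vertical line and, using the analyticity on the enlarged sector $|\arg p|<\frac{\pi}{2\alpha_N}$, deforming to a Hankel-type contour; this gives $u\in L^1_{loc}(\R_+;L^2(\Omega))$ together with condition (i) of Definition \ref{d2}, while uniqueness is immediate from injectivity of the Laplace transform. To obtain the equivalence with Definition \ref{d1} I would verify its three conditions: (iii) is built into the construction; (i) follows by inverting \eqref{d2a} and using $\wh{D_t^K[u-u_0]}=p\wh K(p,\cdot)\wh u-\wh K(p,\cdot)u_0$ to recover \eqref{d1a} in $D'(\R_+\times\Omega)$; and (ii), including $I_K[u-u_0]\in W^{1,1}_{loc}(\R_+;D'(\Omega))$ and the vanishing \eqref{d1b}, follows from the small-time behaviour encoded in the $|p|\to\infty$ estimates. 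The Duhamel representation \eqref{mul} is then read off by writing $\wh u=(\cA+p\wh K(p,\cdot))^{-1}[\wh F+\wh K(p,\cdot)u_0]$ and inverting, with the solution operators defined through the corresponding contour integrals of the resolvent.

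The main obstacle I anticipate is that, unlike the constant-coefficient multiterm case, here the $\rho_j$ genuinely depend on $x$, so $p\wh K(p,\cdot)$ is a variable-coefficient multiplication operator and $\cA+p\wh K(p,\cdot)$ is \emph{not} diagonalised by the eigenbasis of $\cA$. Consequently no explicit Mittag-Leffler functional calculus is available, and the $p$-decay estimates—in particular the uniform control as $\arg p\to\pm\frac{\pi}{2\alpha_N}$ required both to justify the contour deformation and to run the small-time analysis for condition (ii)—must be extracted directly from the coercivity of $B_p$ rather than from a scalar function of $\cA$.
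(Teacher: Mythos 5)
Your proposal follows the same overall architecture as the paper (sectorial resolvent estimate, contour-integral solution operators $\mathcal R_0,\mathcal R_1$, Duhamel formula \eqref{mul}, then the density scheme of Theorem \ref{t1} for the equivalence of the two definitions), but it reaches the key technical ingredient by a genuinely different route. The paper's Lemma \ref{l5} proves $\|(A+\sum_k\rho_k z^{\alpha_k})^{-1}\|\leq C|z|^{-\alpha_N}$ on \emph{any} sector $|\arg z|<\theta$ with $\theta<\pi$, via the factorization $A+\sum_k\rho_k z^{\alpha_k}=B_\beta(B_\beta^{-1}H_zB_\beta^{-1}+i)B_\beta$ with $B_\beta$ the multiplier by $(\sum_k\rho_k r^{\alpha_k}\sin(\beta\alpha_k))^{1/2}$, split into the two regimes $|\beta|$ bounded away from $0$ and $|\beta|$ small (where a Neumann-series perturbation of $H_z^{-1}$ is used). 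You instead obtain the same $|p|^{-\alpha_N}$ decay by Lax--Milgram coercivity of $\re B_p$, which is valid only on the smaller sector $|\arg p|<\frac{\pi}{2\alpha_N}$ where $\re(p^{\alpha_j})>0$ for every $j$; this is simpler, gives a single uniform constant on each closed subsector (no case split at $\beta=0$), and — since $\alpha_N<1$ forces $\frac{\pi}{2\alpha_N}>\frac{\pi}{2}$ — still leaves room to place the inversion contour $\gamma(\delta,\theta)$ with $\theta\in(\frac{\pi}{2},\frac{\pi}{2\alpha_N})$, which is all the argument ever uses. What the paper's stronger lemma buys is analyticity and bounds of the resolvent on all of $\C\setminus(-\infty,0]$, convenient for freely moving $\theta$; what yours buys is brevity and a transparent energy identity that also yields $\wh u(p,\cdot)\in H^1_0(\Omega)$ directly. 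Two points you compress that the paper spends real effort on: (a) passing from $\wh u=(\cA+p\wh K)^{-1}[\wh F+\wh K u_0]$ to the convolution formula \eqref{mul} is done in Lemma \ref{l7} by differentiating a primitive contour integral, which produces an extra term $BF(t,\cdot)$ that must then be killed by a separate Cauchy-theorem argument (Lemma \ref{l8}); and (b) conditions (i)--(ii) of Definition \ref{d1}, including \eqref{d1b}, are not read off from the $|p|\to\infty$ asymptotics alone but are first verified for $F_n\in\mathcal C^\infty_0$ and $u_{0,n}\in\mathcal C^\infty_0(\Omega)$ (where $D_t^Ku_n=\partial_t^Ku_n$ holds classically) and then transported by density in $L^1(0,T_1;D(A_*^{-1}))$. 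Your plan is compatible with both steps, but as written it understates them.
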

\subsection{Comments about our results}

To the best of our knowledge, in Theorem \ref{t1} we obtain the first result of unique existence of weak solutions of variable order fractional fractional diffusion equations solving in the sense of distributions the equation in \eqref{eq1}, as stated in \eqref{d1a}, and with explicit connection to the initial condition $u_0$ stated in \eqref{d1a}-\eqref{d1b}. As far as we know, the only other comparable results can be found in \cite{KSY} where the authors proved only existence of Laplace-weak solution of \eqref{eq1} with $K$ given by \eqref{Kvariable}. In that sense, Theorem \ref{t1} gives the first extension of the the analysis of \cite{KSY} by proving that the unique Laplace-weak solution under consideration in \cite{KSY} is also the unique weak solution in the sense of Definition \ref{d1}.

Let us observe that, in Theorem \ref{t2} and \ref{t3} we show, for what seems to be the first time, the unique existence of weak solutions of distributed order and multiterm fractional diffusion equations that enjoy simultaneously the following properties; 1) The weak solution solves in the sense of distributions the equation in \eqref{eq1} as stated in \eqref{d1a}; 2) The weak solution is explicitly connected with the initial condition $u_0$ by \eqref{d1a}-\eqref{d1b}; 3) The weak solution is described by a Duhamel type of formula; 4) The weak solution is also a Laplace-weak solution in the sense of Definition \ref{d1}. Indeed, several authors proved unique existence of solutions of distributed order and multiterm fractional diffusion equations enjoying the above properties 1) and 2) (see e.g. \cite{KJ,KR,LHY}) or the above properties 3) and 4) (see e.g. \cite{KSY}). Nevertheless, we are not aware of any results proving existence of weak solutions of  distributed order  fractional diffusion equations  or multiterm fractional diffusion equations with variable coefficients enjoying simultaneously the above properties 1), 2), 3) and 4). In that sense,  Theorem \ref{t2} and \ref{t3} show  that these different properties  of solutions of  distributed order and multiterm fractional diffusion equations can be unified.

In contrast to Definition \ref{d2}, where the solutions are described by mean of the properties of Laplace transform in time of such class of fractional diffusion equations (see e.g. \cite{P} for more details), Definition \ref{d1} gives more explicit properties of solutions of \eqref{eq1}. Namely, the weak solution of \eqref{eq1}, in the sense of Definition \ref{d1}, solves the equation in \eqref{eq1} in the sense of distribution, as stated in \eqref{d1a}. Moreover, this class of weak solutions are also explicitly connected with the initial condition $u_0$ by mean of properties \eqref{d1a}-\eqref{d1b} and condition (iii) gives the boundary condition imposed to weak solutions in the sense of Definition \ref{d1}. By proving the equivalence between Definition \ref{d1} and Definition \ref{d2} of weak solutions, we show that the weak solution of \eqref{eq1} combine the explicit properties of Definition \ref{d1} with the properties of Laplace transform of solutions as stated in Definition \ref{d2}.

Let us observe that the results of Theorem \ref{t1}, \ref{t2} and \ref{t3} can be applied to the unique existence of solutions of the IBVP \eqref{eq1} at finite time (see the IBVP \eqref{eqq1}). This aspect is discussed in Section 5 of the present article with a definition of weak solutions stated in Definition \ref{d3} by mean of a weak  solution at infinite time in the sense of Definition \ref{d1}. Our results for these issue are stated in Theorem \ref{t4}, where we show that the unique solution in the sense of Definition \ref{d3} is independent of the choice of the final time.

Let us observe that the boundary condition  under consideration in \eqref{eq1} can be replaced, at the price of some minor modifications, by more general homogeneous Neumann or Robin boundary condition. In the spirit of the work \cite{KY2}, it is also possible to consider non-homogeneous boundary conditions. For simplicity we restrict our analysis to homogenous Dirichlet boundary conditions.

\subsection{Outline}

This paper is organized as follows. In Section 2, we prove the existence of a Laplace-weak solutions of the IBVP \eqref{eq1} in the sense of Definition \ref{d2}  as well as the equivalence between Definition \ref{d1} and  \ref{d2}, when $K$ is given by \eqref{Kvariable}, stated in Theorem \ref{t1}. In the same way, Section 3 and 4 are respectively devoted to the proof of Theorem \ref{t2} and \ref{t3}. Moreover, in Section 5, we study the same  problem at finite time (see the IBVP \eqref{eqq1})  and we give a definition of solutions in that context stated in Definition \ref{d3}. We prove also in Theorem \ref{t3} the unique existence of solutions in the sense of Definition \ref{d3} as well as the independence of the unique solution in the sense of  Definition \ref{d3} with respect to the final time.

\section{Variable order fractional diffusion equations}

In this section, we prove the unique existence of a weak
solution to the problem \eqref{eq1} as well as the equivalence between Definition \ref{d1} and \ref{d2} of weak and Laplace-weak solutions of \eqref{eq1} for weight $K$ given by \eqref{Kvariable} with $\alpha\in L^\infty(\Omega)$ satisfying \eqref{alpha}. For this purpose, let us first recall that the unique existence of solutions close to the  Laplace-weak solutions for \eqref{eq1} has been proved by \cite[Theorem 1.1]{KSY} in the case of source terms $F\in L^\infty(\R_+;L^2(\Omega))$. We will recall here the representation of Laplace-weak solutions of \eqref{eq1} given by \cite{KSY}.  For this purpose, we fix
$\theta\in(\frac{\pi}{2},\pi)$, $\delta>0$ and we define the contour in $\mathbb C$, 
\bel{cont1}
\gamma(\delta,\theta):=\gamma_-(\delta,\theta)\cup\gamma_0(\delta,\theta)\cup\gamma_+(\delta,\theta),\ee
oriented in the counterclockwise direction with
\begin{equation}\label{g2}
\gamma_0(\delta,\theta):=\{\delta\, e^{i\beta}:\ \beta\in[-\theta,\theta]\},\quad\gamma_\pm(\delta,\theta)
:=\{s\,e^{\pm i\theta}\mid s\in[\delta,\infty)\}.
\end{equation}
 We denote also by $A$ the Dirichlet realization of the operator $\mathcal A$ acting on $L^2(\Omega)$ with domain $H^2(\Omega)\cap H^1_0(\Omega)$. Then, following
\cite{KSY}, we define the operators
\begin{equation}\label{S0}
S_0(t)\psi:=\frac{1}{2i\pi}\int_{\gamma(\delta,\theta)}e^{t p}\left(A+\rho p^{\alpha(\cdot)}\right)^{-1}\rho p^{\alpha(\cdot)-1}\psi d p,
\quad t>0,
\end{equation}
\begin{equation}\label{S1}
S_1(t)\psi:=\frac{1}{2i\pi}\int_{\gamma(\delta,\theta)}e^{t p}\left(A+\rho p^{\alpha(\cdot)}\right)^{-1}\psi d p,
\quad t>0.
\end{equation}
According to
\cite[Theorem 1.1]{KSY}, the definition of the operator valued functions $S_0,S_1$ are independent of the choice of
$\theta\in\left(\frac{\pi}{2},\pi\right)$, $\delta>0$. In light of \cite[Theorem 1.1]{KSY} and
\cite[Remark 1]{KSY}, for $u_0\in L^2(\Omega)$ and $F\in L^\infty(\R_+;L^2(\Omega))$, the function $u$ defined by 
\begin{equation}\label{sol1}
u(t, \cdot )=S_0(t)u_0+
\int_0^t S_1(t-\tau)F(\tau,\cdot) d\tau,\quad t>0,
\end{equation}
is the unique tempered distribution with respect to the time variable $t\in\R_+$ taking values in $L^2(\Omega)$ whose Laplace transform in time solves \eqref{d2a}. This means that the function $u$ given by \eqref{sol1} will be the unique Laplace-weak solution of problem \eqref{eq1} in the sense of Definition \ref{d1} provided that  $u\in L^1_{loc}(\R_+;L^2(\Omega))$.

We start by proving an extension of this result to the  unique existence of a Laplace-weak solution of problem \eqref{eq1} when $u_0\in L^2(\Omega)$ and $F\in\mathcal J$.  For this purpose, we need the following intermediate result about the operator valued functions $S_0$ and $S_1$.

\begin{lem}\label{l1}
Let $\theta\in\left(\frac{\pi}{2},\pi\right)$. The maps $t\longmapsto S_j(t)$, $j=0,1$, defined by
\eqref{S0}-\eqref{S1} are lying in $L^1_{loc}(\R_+;\mathcal B(L^2(\Omega))$ and there
exists a constant $C>0$ depending only on $\mathcal A,\rho,\alpha,\theta,\Omega$
such that the estimates
\begin{equation}\label{l1a}
\|S_0(t)\|_{\mathcal B(L^2(\Omega))}\leq
C\max\left(t^{2(\alpha_M-\alpha_0)},
t^{2(\alpha_0-\alpha_M)},1\right),\quad t>0,
\end{equation}
\begin{equation}\label{ll2a}
\|S_1(t)\|_{\mathcal B(L^2(\Omega))}\leq
C\max\left(t^{2\alpha_M-\alpha_0-1},t^{2\alpha_0-\alpha_M-1},1\right),\quad t>0,
\end{equation}
hold true
\end{lem}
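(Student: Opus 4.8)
The plan is to isolate a resolvent estimate for $A+\rho p^{\alpha(\cdot)}$ and then feed it into the Dunford--Taylor integrals \eqref{S0}--\eqref{S1}. The crux is the resolvent bound, which I would obtain by a numerical-range argument. Fix $p$ in the sector $|\arg p|\le\theta$ and write $p=|p|e^{i\phi}$ with $|\phi|\le\theta$. For $u\in\dom(A)$ put $f=(A+\rho p^{\alpha(\cdot)})u$ and pair with $u$:
\[
\langle f,u\rangle=\langle Au,u\rangle+\int_\Omega\rho(x)\,|p|^{\alpha(x)}e^{i\alpha(x)\phi}\,|u(x)|^2\,dx .
\]
Since $a$ is symmetric and $q\ge0$, $\langle Au,u\rangle$ is real and nonnegative, hence of argument $0$, while each integrand has argument $\alpha(x)\phi\in[-\alpha_M\theta,\alpha_M\theta]$. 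Thus all contributions lie in one sector of half-opening $\alpha_M\theta/2<\pi/2$ (using $\alpha_M<1$, $\theta<\pi$), and rotating onto its bisector gives
\[
|\langle f,u\rangle|\ge\cos\!\Big(\tfrac{\alpha_M\theta}{2}\Big)\Big(\langle Au,u\rangle+\int_\Omega\rho\,|p|^{\alpha(x)}|u|^2\,dx\Big)\ge c\,\min(|p|^{\alpha_0},|p|^{\alpha_M})\,\|u\|_{L^2(\Omega)}^2 ,
\]
using $\rho\ge c_0$ and $|p|^{\alpha(x)}\ge\min(|p|^{\alpha_0},|p|^{\alpha_M})$. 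With $|\langle f,u\rangle|\le\|f\|\|u\|$ this yields $\|(A+\rho p^{\alpha(\cdot)})^{-1}\|_{\mathcal B(L^2(\Omega))}\le C\max(|p|^{-\alpha_0},|p|^{-\alpha_M})$. Running the same computation with $f$ replaced by $\rho p^{\alpha(\cdot)-1}\psi$ and bounding the resulting pairing by $C_0\max(|p|^{\alpha_0-1},|p|^{\alpha_M-1})\|\psi\|\|u\|$ gives
\[
\|(A+\rho p^{\alpha(\cdot)})^{-1}\rho p^{\alpha(\cdot)-1}\|_{\mathcal B(L^2(\Omega))}\le C\,\frac{\max(|p|^{\alpha_0-1},|p|^{\alpha_M-1})}{\min(|p|^{\alpha_0},|p|^{\alpha_M})},
\]
whose right side is $|p|^{\alpha_M-\alpha_0-1}$ for $|p|\ge1$ and $|p|^{\alpha_0-\alpha_M-1}$ for $|p|\le1$.

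With these two bounds I would estimate \eqref{S0}--\eqref{S1} piece by piece along $\gamma(\delta,\theta)$. On the rays $\gamma_\pm$ one has $|e^{tp}|=e^{-st|\cos\theta|}$ with $|\cos\theta|>0$ (as $\theta>\pi/2$), and on the arc $\gamma_0$ one has $|e^{tp}|\le e^{t\delta}$ with length $O(\delta)$. Since $S_0,S_1$ are independent of $\delta$ by \cite[Theorem 1.1]{KSY}, I would take $\delta=1/t$ so that $e^{t\delta}$ stays bounded; then, after the substitution $\sigma=ts$, every contribution collapses to a convergent integral $\int e^{-\sigma|\cos\theta|}\sigma^{\gamma}\,d\sigma$ multiplied by $t^{-\gamma-1}$, where $\gamma$ is the exponent of $s$ in the relevant branch of the resolvent bound, and the arc $\gamma_0$ contributes the same order. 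Reading off $\gamma\in\{-\alpha_0,-\alpha_M\}$ for $S_1$ and $\gamma\in\{\alpha_M-\alpha_0-1,\alpha_0-\alpha_M-1\}$ for $S_0$, and distinguishing $t\le1$ from $t\ge1$, produces exactly the powers of $t$ in \eqref{l1a}--\eqref{ll2a}; here $\gamma>-1$ as soon as $\alpha_0<\alpha_M$, the degenerate case $\alpha_0=\alpha_M$ being the classical constant-order one, for which $\|S_0(t)\|\le C$ follows from boundedness of the Mittag-Leffler function. The constant then depends only on $\alpha_0,\alpha_M,\theta$ and on $c_0,C_0,\mathcal A,\Omega$, as claimed.

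For the remaining assertions, measurability of $t\mapsto S_j(t)$ follows from continuity of the integrand in \eqref{S0}--\eqref{S1} on $(0,\infty)$ together with the dominating bounds above, uniform on compact subsets, so each $t\mapsto S_j(t)$ is continuous and a fortiori measurable on $\R_+$. Local integrability at $t=0$ is precisely where the standing assumption \eqref{alpha} enters: $\alpha_M<2\alpha_0$ forces $\alpha_M-\alpha_0<1/2$, whence $2(\alpha_0-\alpha_M)>-1$ and $2\alpha_0-\alpha_M-1>-1$, so the singular factors $t^{2(\alpha_0-\alpha_M)}$ and $t^{2\alpha_0-\alpha_M-1}$ in \eqref{l1a}--\eqref{ll2a} are integrable near $0$, giving $S_j\in L^1_{loc}(\R_+;\mathcal B(L^2(\Omega)))$.

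I expect the first paragraph to be the main obstacle. The variable order turns $p^{\alpha(\cdot)}$ into a multiplication operator varying in both modulus and phase over $\Omega$, so $A+\rho p^{\alpha(\cdot)}$ is not a function of $A$ and no spectral calculus is available; the sectoriality must be read off from the common-sector structure of the quadratic form rather than from a functional calculus. The delicate point is that $\theta$ is allowed arbitrarily close to $\pi$, and the estimate survives only because $0<\alpha_0\le\alpha(x)\le\alpha_M<1$ keeps the total opening $\alpha_M\theta$ strictly below $\pi$, so that $\cos(\alpha_M\theta/2)$ stays positive and furnishes a lower bound uniform in $\phi\in[-\theta,\theta]$.
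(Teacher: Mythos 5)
Your proof is correct, but it takes a genuinely different route from the paper on the one point that matters. The paper does not prove the resolvent bound at all: it imports \eqref{ll2b} from \cite[Proposition 2.1]{KSY}, which gives $\|(A+\rho p^{\alpha(\cdot)})^{-1}\|\leq C\max(|p|^{\alpha_0-2\alpha_M},|p|^{\alpha_M-2\alpha_0})$, and it disposes of $S_1$ entirely by citing \cite[Lemma 6.1]{KLY}, only working out the contour estimates for $S_0$. You instead prove a resolvent estimate from scratch by the numerical-range/sectoriality argument, and your version is \emph{sharper}: rotating the sector of half-opening $\alpha_M\theta/2<\pi/2$ onto its bisector yields $\max(|p|^{-\alpha_0},|p|^{-\alpha_M})$ rather than the paper's $\max(|p|^{\alpha_0-2\alpha_M},|p|^{\alpha_M-2\alpha_0})$. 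The downstream contour computation (take $\delta=t^{-1}$, treat arc and rays separately, split $t\leq1$ from $t\geq1$ so that both branches of the bound on a single ray are handled) is the same as the paper's. One consequence you should state more carefully: with your resolvent bound the exponents do \emph{not} come out ``exactly'' as in \eqref{l1a}--\eqref{ll2a}; you get $\max(t^{\alpha_M-\alpha_0},t^{\alpha_0-\alpha_M})$ for $S_0$ and $\max(t^{\alpha_0-1},t^{\alpha_M-1})$ for $S_1$, which are strictly stronger and of course imply the stated estimates (and make local integrability automatic, without even invoking $\alpha_M<2\alpha_0$ from \eqref{alpha}, which the paper does need for its weaker exponents). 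Two small points to tidy up: your pairing argument only gives injectivity plus an a priori bound, so you should add a word on surjectivity of $A+\rho p^{\alpha(\cdot)}$ (Fredholm of index zero since $A$ has compact resolvent and $\rho p^{\alpha(\cdot)}$ is bounded multiplication, or Lax--Milgram for the rotated coercive form); and the aside about the constant-order Mittag--Leffler case is unnecessary, since the exponent $\alpha_0-\alpha_M=0$ causes no integrability issue. What your route buys is a self-contained and quantitatively better lemma; what the paper's route buys is brevity by leaning on \cite{KSY} and \cite{KLY}.
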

\begin{proof} 
Throughout this proof, by $C>0$ we denote generic constants
depending only on $\cA,\rho,\alpha,\te,\Om$, which may change from line to line. In this lemma we only consider the proof of this lemma for the operator valued function $S_0$, for $S_1$ one can refer to \cite[Lemma 6.1]{KLY} for the proof of \eqref{ll2a}.
In light of \cite[Proposition 2.1]{KSY}, for all $\beta\in(0,\pi)$, we have
\begin{equation}\label{ll2b}
\left\|\left(A+\rho(r\,e^{i\beta_1})^{\alpha(\cdot)}\right)^{-1}\right\|_{\mathcal B(L^2(\Omega ))}
\le C\max\left(r^{\alpha_0-2\alpha_M},r^{\alpha_M-2\alpha_0}\right),
\quad r>0,\ \be_1\in(-\be,\be).
\end{equation}
Using the fact that the operator $S_0$ is independent of the choice of $\delta>0$, we can
decompose
\[
S_0(t)=H_-(t)+H_0(t)+H_+(t),\quad t>0,
\]
where
\[
H_m(t)=\frac{1}{2i\pi}\int_{\gamma_m(t^{-1},\te)} e^{t p}(\rho p^{\alpha(\cdot)}+A)^{-1}\rho p^{\alpha-1} dp,
\quad m=0,\mp,\ t>0.
\]
In order to complete the proof of the lemma, it suffices to prove
\begin{equation}\label{ll2c}
\|H_m(t)\|_{\mathcal B(L^2(\Omega))}\leq C\max\left(t^{2(\alpha_M-\alpha_0)},t^{2(\alpha_0-\alpha_M)},1\right),\quad t>0,\ m=0,\mp.
\end{equation}
Indeed, these estimates clearly implies \eqref{l1a}. Moreover,  condition \eqref{alpha} implies that $2(\alpha_0-\alpha_M)>-\alpha_M>-1$ and we deduce from \eqref{l1a} that $S_0\in L^1_{loc}(\R_+;\mathcal B(L^2(\Omega))$. For $m=0$, using \eqref{ll2b}, we find
\[\begin{aligned}
\|H_0(t)\|_{\mathcal B(L^2(\Omega))}&\leq C\int_{-\theta}^\theta
t^{-1}\left\|\left(A+(t^{-1}e^{i\be})^{\alpha(\cdot)}\right)^{-1}\right\|_{\mathcal B(L^2(\Omega)}\norm{|t^{-1}e^{i\be}|^{\alpha(\cdot)-1}}_{L^\infty(\Omega)}
d\beta\\
&\leq C\max\left(t^{2(\alpha_M-\alpha_0)},t^{2(\alpha_0-\alpha_M)},1\right),\end{aligned}
\]
which implies \eqref{ll2c} for $m=0$. For $m=\mp$, again we
employ \eqref{ll2b} to estimate
$$\begin{aligned}
\|H_\mp(t)\|_{\mathcal B(L^2(\Omega))} & \leq C\int_{t^{-1}}^\infty e^{r t\cos\theta}
\left\|\left(A+(r\,e^{i\te})^{\al(\cdot)}\right)^{-1}\right\|_{\mathcal B(L^2(\Om)}d r\\
& \leq C\int_{t^{-1}}^\infty e^{r t\cos\theta}
\max\left(r^{\alpha_0-2\alpha_M},r^{\alpha_M-2\alpha_0}\right) \norm{r^{\alpha(\cdot)-1}}_{L^\infty(\Omega)}d r\\
& \leq C\int_{t^{-1}}^\infty e^{r t\cos\theta}
\max\left(r^{2(\alpha_0-\alpha_M)-1},r^{2(\alpha_M-\alpha_0)-1}\right) d r
\end{aligned}$$
For $t>1$, we obtain
$$\begin{aligned}
\|H_\mp(t)\|_{\mathcal B(L^2(\Omega))} & \leq C\int_1^\infty e^{r t\cos\theta}
r^{2(\alpha_M-\alpha_0)-1}d r+C\int_{t^{-1}}^1r^{2(\alpha_0-\alpha_M)-1} d r\\
& \leq C\int_0^\infty e^{r t\cos\te}r^{2(\alpha_M-\alpha_0)-1}\,d r
+C\left(t^{2(\alpha_0-\alpha_M)}+1\right)\\
& \leq C\,t^{-1}\int_0^\infty e^{r\cos\te}
\left(\frac{ r}{ t}\right)^{2(\alpha_M-\alpha_0)-1}d r
+C\left(t^{2(\alpha_0-\alpha_M)}+1\right)\\
& \leq C\max\left(t^{2(\alpha_0-\alpha_M)},t^{2(\alpha_M-\alpha_0)},1\right).
\end{aligned}$$
In the same way, for $t\in(0,1]$, we get
\[
\|H_\mp(t)\|_{\mathcal B(L^2(\Om))}\leq C\int_1^\infty e^{r t\cos\te}
r^{2(\alpha_M-\alpha_0)-1}d r\leq C\,t^{2(\alpha_M-\alpha_0)}.
\]
Combining these two estimates, we obtain
\[
\|H_\mp(t)\|_{\mathcal B(L^2(\Omega))}\leq
C\max\left(t^{2(\alpha_0-\alpha_M)},t^{2(\alpha_M-\alpha_0)},1\right),\quad t>0.
\]
This proves that \eqref{ll2c} also holds true for $m=\mp$. Therefore, estimate \eqref{l1a} holds true and we have $S_0\in L^1_{loc}(\R_+;\mathcal B(L^2(\Omega))$ which completes  the proof of the lemma.
\end{proof}

We are now in position to state the existence of a unique Laplace-weak solution $u\in L^1_{loc}(\R_+;L^2(\Omega))$ of \eqref{eq1}, given by \eqref{sol1}, for any source term $F\in\mathcal J$.

\begin{prop}\label{p1} Assume that the conditions \eqref{ell}-\eqref{eq-rho} are fulfilled.
Let $u_0\in L^2(\Omega)$, $F\in\mathcal J$, $\alpha\in L^\infty(\Omega)$ satisfy \eqref{alpha} and let $K$ be given by \eqref{Kvariable}. Then there exists a unique Laplace-weak
solution $u\in L^1_{loc}(\R_+;L^2(\Omega))$ to \eqref{eq1} given by \eqref{sol1}.
\end{prop}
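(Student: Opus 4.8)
The plan is to verify directly that the function $u$ defined by \eqref{sol1} meets the two requirements of Definition \ref{d2}, and then to obtain uniqueness from the injectivity of the Laplace transform. Since the operators $S_0,S_1$, the bounds of Lemma \ref{l1} and the resolvent estimate \eqref{ll2b} are already at our disposal from \cite{KSY}, the only genuinely new point is to accommodate the larger class $F\in\mathcal J$ in place of $F\in L^\infty(\R_+;L^2(\Omega))$. I would begin with the well-posedness of \eqref{sol1} in $L^1_{loc}(\R_+;L^2(\Omega))$. By Lemma \ref{l1} we have $S_0,S_1\in L^1_{loc}(\R_+;\mathcal B(L^2(\Omega)))$, so $t\mapsto S_0(t)u_0$ lies in $L^1_{loc}(\R_+;L^2(\Omega))$ for any $u_0\in L^2(\Omega)$; and on each interval $(0,T)$ the Duhamel term $t\mapsto\int_0^tS_1(t-\tau)F(\tau,\cdot)\,d\tau$ is the convolution of $S_1\in L^1((0,T);\mathcal B(L^2(\Omega)))$ with $F\in L^1((0,T);L^2(\Omega))$, so Young's convolution inequality for Bochner integrals places it in $L^1((0,T);L^2(\Omega))$. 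As $T>0$ is arbitrary, $u\in L^1_{loc}(\R_+;L^2(\Omega))$.

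Next I would establish condition (i), i.e. $e^{-\tau t}u\in L^1(\R_+;L^2(\Omega))$ for every $\tau>0$. For $S_0(t)u_0$ this follows from \eqref{l1a}: near $t=0$ the singularity $t^{2(\alpha_0-\alpha_M)}$ is integrable because \eqref{alpha} gives $2(\alpha_0-\alpha_M)>-\alpha_M>-1$, while the polynomial growth at infinity is absorbed by $e^{-\tau t}$. For the Duhamel term I would write $e^{-\tau t}\int_0^tS_1(t-s)F(s,\cdot)\,ds=(g_\tau*h_\tau)(t)$ with $g_\tau(t)=e^{-\tau t}S_1(t)$ and $h_\tau(t)=e^{-\tau t}F(t,\cdot)$. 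The bound \eqref{ll2a} (whose smallest exponent near $t=0$ is $2\alpha_0-\alpha_M-1>-1$) yields $g_\tau\in L^1(\R_+;\mathcal B(L^2(\Omega)))$; and since $F\in\mathcal J$ there is $J\in\mathbb N$ with $(1+t)^{-J}F\in L^1(\R_+;L^2(\Omega))$, so the boundedness of $t\mapsto e^{-\tau t}(1+t)^J$ on $\R_+$ gives $h_\tau\in L^1(\R_+;L^2(\Omega))$. Young's inequality then produces $g_\tau*h_\tau\in L^1(\R_+;L^2(\Omega))$. This proves (i) and in particular shows that $\wh u(p,\cdot)$ and $\wh F(p,\cdot)$ are defined for all $\re p>0$.

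Then I would verify condition (ii). Because $e^{-\tau t}u\in L^1$ for every $\tau>0$, the convolution theorem gives $\wh u(p,\cdot)=\wh{S_0}(p)u_0+\wh{S_1}(p)\wh F(p,\cdot)$ for $\re p>0$. Inserting the contour representations \eqref{S0}--\eqref{S1}, exchanging the time integral with the contour integral by Fubini (justified by \eqref{ll2b} together with the decay of $e^{tp}$ along the rays $\gamma_\pm$), and evaluating $\frac{1}{2i\pi}\int_{\gamma(\delta,\theta)}(A+\rho z^{\alpha(\cdot)})^{-1}(p-z)^{-1}\,dz$ by Cauchy's theorem exactly as in \cite{KSY}, I would obtain $\wh{S_1}(p)=(A+\rho p^{\alpha(\cdot)})^{-1}$ and $\wh{S_0}(p)=(A+\rho p^{\alpha(\cdot)})^{-1}\rho p^{\alpha(\cdot)-1}$. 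Hence $\wh u(p,\cdot)=(A+\rho p^{\alpha(\cdot)})^{-1}[\wh F(p,\cdot)+\rho p^{\alpha(\cdot)-1}u_0]$, which lies in $\dom(A)=H^2(\Omega)\cap H^1_0(\Omega)\subset H^1_0(\Omega)$; and since $\wh K(p,\cdot)=p^{\alpha(\cdot)-1}$ for $K$ given by \eqref{Kvariable}, this is precisely the solution of \eqref{d2a}, proving (ii) with $p_1=0$.

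Finally, for uniqueness, if $u$ and $v$ are two Laplace-weak solutions then $w=u-v$ has exponential order $0$, so $\wh w(p,\cdot)$ is analytic for $\re p>0$ and, for $\re p$ large, solves the homogeneous equation $(A+\rho p^{\alpha(\cdot)})\wh w(p,\cdot)=0$; the invertibility of $A+\rho p^{\alpha(\cdot)}$ guaranteed by \eqref{ll2b} forces $\wh w(p,\cdot)=0$ there, hence for all $\re p>0$ by analytic continuation, and the injectivity of the Laplace transform gives $w=0$. I expect the main obstacle to be the rigorous justification of the Fubini exchange and contour evaluation identifying $\wh{S_0}$ and $\wh{S_1}$ with the resolvent expressions, together with the bookkeeping that makes the integrable singularity at $t=0$ and the polynomial growth at infinity of $S_0,S_1$ compatible with the polynomial growth of $F\in\mathcal J$; once these resolvent identities are in hand, the remaining verifications are routine.
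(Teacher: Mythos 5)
Your proposal is correct, but it takes a genuinely different route from the paper at the key step. The paper does not compute $\wh{S_0}(p)$ and $\wh{S_1}(p)$ directly for general data; instead it reduces to $u_0\equiv 0$, approximates $G=(1+t)^{-J}F$ by $G_n\in\mathcal C_0^\infty(\R_+\times\Omega)$, sets $F_n=(1+t)^JG_n\in L^\infty(\R_+;L^2(\Omega))$, invokes \cite[Theorem 1.1 and Remark 1]{KSY} as a black box to get $(A+\rho p^{\alpha(\cdot)})\wh{u_n}(p)=\wh{F_n}(p)$, and then passes to the limit in $\wh{u_n}(p)\to\wh u(p)$ and $\wh{F_n}(p)\to\wh F(p)$ using the same Young-inequality bounds you use. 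Your approach instead verifies conditions (i)--(ii) of Definition \ref{d2} directly: Young's inequality gives $e^{-\tau t}u\in L^1(\R_+;L^2(\Omega))$ for every $\tau>0$, the convolution theorem gives $\wh u(p)=\wh{S_0}(p)u_0+\wh{S_1}(p)\wh F(p)$, and the resolvent identities $\wh{S_1}(p)=(A+\rho p^{\alpha(\cdot)})^{-1}$, $\wh{S_0}(p)=(A+\rho p^{\alpha(\cdot)})^{-1}\rho p^{\alpha(\cdot)-1}$ follow from Fubini and a contour deformation. What your route buys is directness (no double limit, and the $u_0$-term is handled on the same footing as the source term); what it costs is that you must actually carry out, or carefully cite, the Fubini/contour computation that \cite{KSY} performs — in particular one must close the truncated contour $\gamma(\delta,R,\theta)$ to the \emph{right}, through the arc $\gamma_0(R,\theta)$ crossing the positive real axis (the branch cut $(-\infty,0]$ prevents closing to the left), pick up the pole at $z=p$ with the correct orientation, and use the decay $\|(A+\rho z^{\alpha(\cdot)})^{-1}\|=O(|z|^{-\min(2\alpha_M-\alpha_0,\,2\alpha_0-\alpha_M)})$, which requires the structural hypothesis $\alpha_M<2\alpha_0$ from \eqref{alpha}, to discard the large arc. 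The paper's density argument avoids reopening that computation at the price of an approximation step. Your uniqueness argument is the same as the paper's. One cosmetic remark: you are right to keep the non-commuting factor $\rho p^{\alpha(\cdot)-1}$ to the right of the resolvent in $\wh{S_0}(p)$; in the variable-order setting this ordering matters and matches \eqref{S0} and \eqref{d2a}.
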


\begin{proof}

According to \cite[Theorem 1.1]{KSY} and Lemma \ref{l1}, we only need to prove this result for $u_0\equiv0$.  Using Lemma \ref{l1},  we will complete the proof of Proposition \ref{p1} by mean of density arguments.  Fix $$G(t,x)=(1+t)^{-J}F(t,x),\quad (t,x)\in\R_+\times\Omega$$
and recall that $G\in L^1(\R_+;L^2(\Omega))$. Therefore, we can find a sequence $(G_n)_{n\in\mathbb N}$ lying in $\mathcal C^\infty_0(\R_+\times\Omega)$ such that
$$\lim_{n\to\infty}\|G_n-G\|_{L^1(\R_+;L^2(\Omega))}=0.$$
Fixing $(F_n)_{n\in\mathbb N}$ a sequence of functions defined by 
$$F_n(t,x)=(1+t)^{J}G_n(t,x),\quad (t,x)\in\R_+\times\Omega,\ n\in\mathbb N,$$
we deduce that the sequence $(F_n)_{n\in\mathbb N}$ is lying in $\mathcal C^\infty_0(\R_+\times\Omega)$ and we have
\begin{equation}\label{p1a}
\lim_{n\to\infty}\|(1+t)^{-J}(F_n-F)\|_{L^1(\R_+;L^2(\Omega))}=\lim_{n\to\infty}\|G_n-G\|_{L^1(\R_+;L^2(\Omega))}=0.
\end{equation}
In light of Lemma \ref{l1}, for $t> 0$ we can introduce
$$\begin{aligned}
u_n(t,\cdot\,) & :=\int_0^t S_1(t-\tau)F_n(\tau,\cdot)d\tau=\int_0^t S_1(\tau)F_n(t-\tau,\cdot)d\tau,
\quad n\in\mathbb N,\\
u(t,\,\cdot\,) & :=\int_0^t S_1(t-\tau)F(\tau,\cdot)d\tau
\end{aligned}$$
as elements of $L^1_{loc}(\R_+;L^2(\Omega))$. We will prove that for all $p\in\mathbb C_+:=\{z\in\mathbb C:\ \re z>0\}$, the
Laplace transform $\wh u(p)$ of $u$ is well-defined in $L^2(\Omega)$ and we
have
\begin{equation}\label{p1c}
\lim_{n\to\infty}\|\wh{u_n}(p)-\wh u(p)\|_{L^2(\Omega)}=0.
\end{equation}
Applying estimate \eqref{ll2a}, we obtain
$$\begin{aligned}
\left\|e^{-p t}u(t,\,\cdot\,)\right\|_{L^2(\Omega)} & \leq\int_0^t
e^{-\re p\tau}\|S_1(\tau)\|_{\mathcal B(L^2(\Omega))}\,
e^{-\re p(t-\tau)}\|F(t-\tau,\cdot)\|_{L^2(\Omega)}d\tau\\
& \leq C\left(e^{-\re p t}\max
\left(t^{2\alpha_0-\al_M-1},t^{2\alpha_M-\al_0-1},1\right)\right)*
\left(e^{-\re p t}\|F(t,\cdot)\|_{L^2(\Omega)}\right)
\end{aligned}$$
for all $t>0$ and $p\in\mathbb C_+$, where $*$ denotes the convolution in
$\mathbb R_+$. Therefore, applying Young's convolution inequality
and condition \eqref{alpha}, we deduce
$$\begin{aligned}
&\|\wh u(p)\|_{L^2(\Omega)}\\
 & \leq\int_0^\infty
\left\| e^{-p t}u(t,\,\cdot\,)\right\|_{L^2(\Omega)}d t\\
& \leq C\left(\int_0^\infty e^{-\re p t}
\max\left(t^{2\al_0-\al_M-1},t^{2\al_M-\al_0-1},1\right)d t\right)
\left(\int_0^\infty e^{-\re p t}\|F(t,\cdot)\|_{L^2(\Omega)}d t\right)\\
&\leq C_p\left(\int_0^\infty e^{-\re p t}
\max\left(t^{2\al_0-\al_M-1},t^{2\al_M-\al_0-1},1\right)d t\right) \|(1+t)^{-J}F(t,\cdot)\|_{L^1(\R_+;L^2(\Omega))}<\infty.
\end{aligned}$$
for all $p\in\mathbb C_+$. This proves that $\wh u(p)$ is well defined for
all $p\in\mathbb C_+$ in the sense of $L^2(\Omega)$. In the same way, for all
$t>0$, $p\in\mathbb C_+$ and $n\in\mathbb N$, we get
$$\begin{aligned}
&\left\| e^{-p t}(u_n-u)(t,\,\cdot\,)\right\|_{L^2(\Omega)}\\
& \leq\int_0^te^{-\re  p(t-\tau)}\|S_1(t-\tau)\|_{\mathcal B(L^2(\Omega))}
e^{-\re  p\tau}\|F_n(\tau,\cdot)-F(\tau,\cdot)\|_{L^2(\Omega)}d\tau\\
& \le C\left(e^{-\re p t}
\max\left(t^{2\al_0-\al_M-1},t^{2\al_M-\al_0-1},1\right)\right)*
\left(e^{-\re p t}\|F_n(t,\cdot)-F(t,\cdot)\|_{L^2(\Omega)}\right).
\end{aligned}$$
Thus, applying Young's convolution inequality again, we have
$$\begin{aligned}
& \quad\,\|\wh{u_n}(p)-\wh u(p)\|_{L^2(\Omega)}
\le\int_0^\infty\left\|e^{-p t}(u_n-u)(t,\,\cdot\,)\right\|_{L^2(\Omega)}d t\\
& \le C\left(\int_0^\infty e^{-\re  p t}
\max\left(t^{2\al_0-\al_M-1},t^{2\al_M-\al_0-1},1\right)d t\right)
\left(\int_0^\infty e^{-\re  p t}\|F_n(t,\cdot)-F(t,\cdot)\|_{L^2(\Omega)}\,d t\right)\\
& \le C_p\left(\sup_{t\in\R_+}(1+t)^{J}e^{-\re  p t}\right)\|(1+t)^{-J}(F_n-F)\|_{L^1(\R_+;L^2(\Omega))}
\end{aligned}$$
for all $p\in\mathbb C_+$ and $n\in\mathbb N$, and \eqref{p1a} implies \eqref{p1c}. On the other
hand, in view of \cite[Theorem 1.1 and Remark 1]{KSY}, since
$F_n\in L^\infty(\R_+;L^2(\Omega))$, we have
\[
\left(\rho p^{\alpha(\cdot)}+A\right)\wh{u_n}(p,\cdot)=\left(\int_0^\infty e^{-p t}F_n(t,\,\cdot\,)\,d t\right)
,\quad p\in\mathbb C_+,\ n\in\N.
\]
 In addition, \eqref{p1a} implies that, for all $p\in\mathbb C_+$, we have
\[\begin{aligned}
\limsup_{n\to\infty}\norm{\wh{F_n}(p)-\wh{F}(p)}_{L^2(\Omega)}&\leq \limsup_{n\to\infty}\int_0^\infty e^{-\re  p t}\|F_n(t,\cdot)-F(t,\cdot)\|_{L^2(\Omega)}\,d t\\
\ &\leq  \left(\sup_{t\in\R_+}(1+t)^{N}e^{-\re  p t}\right) \limsup_{n\to\infty}\|(1+t)^{-J}(F_n-F)\|_{L^1(\R_+;L^2(\Omega))}\\
\ &\leq0.\end{aligned}
\]
Therefore, we obtain
\[
\lim_{n\to\infty}
\left\|\wh{u_n}(p)-\left(A+\rho p^{\alpha(\cdot)}\right)^{-1}\wh F(p,\cdot)\right\|_{L^2(\Omega)}=0
\]
and \eqref{p1c} implies that $\wh u(p)=\left(A+\rho p^{\alpha(\cdot)}\right)^{-1}\wh F(p,\cdot)$,
$p\in\mathbb C_+$. From the definition of the operator $A$, we deduce that
$\wh u(p)\in H^1_0(\Omega)$ solves the boundary value problem \eqref{d2a} for
all $p\in\mathbb C_+$. Recalling that the uniqueness of Laplace-weak solutions can be deduced easily from the uniqueness of the solution of \eqref{d2a} and the uniqueness of Laplace transform, we conclude that $u$ is the unique Laplace-weak
solution of \eqref{eq1} and the proof is completed.
\end{proof}

In view of Proposition \ref{p1} the first statement of Theorem \ref{t1} is fulfilled. Let us complete the proof of Theorem \ref{t1}.

\textbf{Proof of Theorem $\ref{t1}$.} Let us first observe that the first statement of Theorem \ref{t1} is a direct consequence of Proposition \ref{p1}. Therefore, in order to complete the proof of Theorem \ref{t1} we need to prove that \eqref{eq1} admits a unique weak solution $u\in L^1_{loc}(\R_+;L^2(\Omega))$ in the sense of Definition \ref{d1} given by \eqref{sol1} which is the unique Laplace-weak solution of \eqref{eq1}. We divide the proof of our result into three steps. We start by proving the uniqueness of the solution of \eqref{eq1} in the sense of Definition \ref{d2}. Then we prove that \eqref{sol1} is a weak solution of \eqref{eq1} in the sense of Definition \ref{d1} for $u_0\equiv0$. Finally, we show that \eqref{sol1}  is a weak solution of \eqref{eq1} in the sense of Definition \ref{d1} for $F\equiv0$.

{\bf Step 1. } This step will be devoted to the proof of the uniqueness of weak solutions of \eqref{eq1} in the sense of Definition \ref{d1}.
For this purpose, let $u\in L^1_{loc}(\R_+;L^2(\Omega))$ be a weak solution of \eqref{eq1} with $F\equiv0$ and $u_0\equiv0$. In view of condition (iii) of Definition \ref{d1}, we can fix
$$
p_0=\inf\{\tau>0:\ e^{-\tau t}u\in L^1(\R_+;L^2(\Omega))\}.
$$
Then, using the fact that $a:=(a_{i,j})_{1 \leq i,j \leq d} \in  H^1(\Omega;\R^{d^2})$, for all $p\in\mathbb C$ satisfying $\re\,p>p_0$, we have $e^{-p t}\mathcal A u\in L^1(\R_+;D'(\Omega))$ and condition \eqref{d2a} implies that 
$$
e^{-p t}\rho D_t^K u=-e^{-p t}\mathcal A u,\quad t\in\R_+.
$$
It follows that, for all $p\in\mathbb C$ satisfying $\re\,p>p_0$, $e^{-p t}\rho D_t^K u\in L^1(\R_+;D'(\Omega))$. Combining this with condition (ii) of Definition \ref{d1}, we deduce that, for all $p\in\mathbb C$ satisfying $\re\,p>p_0$, $e^{-p t}\rho I_Ku\in W^{1,1}(\R_+;D'(\Omega))$. Therefore, multiplying \eqref{d1a} by $e^{-p t}$ with $p\in\mathbb C$ satisfying $\re\,p>p_0$, integrating over $t\in\R_+$ and using condition \eqref{d1b}, we find
\begin{align*}
0 & =\int_0^\infty e^{-p t}\left(\rho D_t^K u(t,\,\cdot\,)+\mathcal A u(t,\,\cdot\,)\right)d t=\int_0^\infty e^{-p t}\left(\partial_t[\rho I_Ku(t,\,\cdot\,)]+\mathcal A u(t,\,\cdot\,)\right)d t\\
& =p\rho\wh{I_Ku}(p,\,\cdot\,)+\wh{\mathcal A u}(p,\,\cdot\,)=\rho p^{\alpha(\cdot)}\wh u(p,\,\cdot\,)+\mathcal A\wh u(p,\,\cdot\,).
\end{align*}
Combining this with condition (iii) of Definition \ref{d2}, we deduce that, for all $p\in\mathbb C$ satisfying $\re\,p>p_1$, $\wh u(p,\,\cdot\,)\in H^1_0(\Omega)$ solves the boundary value problem
$$
\begin{cases}
(\mathcal A+p^{\alpha(\cdot)}\rho)\wh u(p,\,\cdot\,)=0 & \mbox{in }\Omega,\\
\wh u(p,\,\cdot\,)=0 & \mbox{on }\partial\Omega.
\end{cases}
$$

On the other hand, applying \cite[Proposition 2.1]{KSY} we deduce that for all $p\in\mathbb C_+$ the operator $A+p^{\alpha(\cdot)}\rho$ is invertible as an operator acting on $L^2(\Omega)$. Therefore, we get that , for all $p\in\mathbb C$ satisfying $\re\,p>p_1$,
$\wh u(p;\,\cdot\,)\equiv0$ and, combining this with the analyticity and the uniqueness of Laplace transform, we deduce that $u\equiv0$. This completes the proof of the uniqueness of weak solution of problem \eqref{eq1}.\medskip

{\bf Step 2.} In this step we will prove that the Laplace-weak solution $u\in L^1_{loc}(\R_+;L^2(\Omega))$ of \eqref{eq1}, given by \eqref{sol1}, is the weak solution of \eqref{eq1} in the sense of Definition \ref{d1} when $u_0\equiv0$. Note that the Laplace-weak solution $u\in L^1_{loc}(\R_+;L^2(\Omega))$ of \eqref{eq1} clearly satisfies condition (iii) of Definition \ref{d1}. Thus, we only need to prove that the Laplace-weak solution $u\in L^1_{loc}(\R_+;L^2(\Omega))$ of \eqref{eq1} satisfies conditions (i) and (ii) of Definition \ref{d1}. In a similar way to Proposition \ref{p1}, we fix  a sequence $(G_n)_{n\in\mathbb N}$ lying in $\mathcal C^\infty_0(\R_+\times\Omega)$ such that
$$\lim_{n\to\infty}\|G_n-G\|_{L^1(\R_+;L^2(\Omega))}=0$$
and $(F_n)_{n\in\mathbb N}$ a sequence of functions of $\mathcal C^\infty_0(\R_+\times\Omega)$ defined by 
$$F_n(t,x)=(1+t)^{J}G_n(t,x),\quad (t,x)\in\R_+\times\Omega,\ n\in\mathbb N.$$
Then condition \eqref{p1a} is fulfilled. According to Proposition \ref{p1}, for all $n\in\mathbb N$, the Laplace-weak solution $u_n$ of \eqref{eq1} with $F=F_n$ is given by
$$
u_n(t,\,\cdot\,)=\int_0^t\,S(t-\tau)F_n(\tau,\cdot)d\tau=\int_0^t\,S(\tau)F_n(t-\tau,\cdot)d\tau,\quad t\in \mathbb R_+.
$$
Using the fact that $F_n\in\mathcal C^\infty_0(\R_+\times\Omega)$, $n\in\mathbb N$, and applying estimate \eqref{ll2a}, we deduce that $u_n\in C^1([0,\infty);L^2(\Omega))$ and $u_n(0,x)=0,\  x\in\Omega$.
Moreover, in view of \eqref{ll2a}, applying Young's convolution inequality, for all $p\in\mathbb C_+$, we get
$$\begin{aligned}
& \quad\,\| e^{-p t}u_n\|_{L^1(\R_+;L^2(\Omega))}+\|e^{-p t}\partial_tu_n\|_{L^1(\R_+;L^2(\Omega))}\\
& \leq C\left\|\left(e^{-(\re\,p)t}(\|F_n(t)\|_{L^2(\Omega)}+\|\partial_tF_n(t)\|_{L^2(\Omega)})\right)*\left(e^{-(\re\,p)t}\|S(t)\|_{\mathcal B(L^2(\Omega))}\right)\right\|_{L^1(\R_+)}\\
& \leq C\|F_n\|_{W^{1,1}(\R_+;L^2(\Omega))}\int_0^\infty e^{-(\re\,p)t}\max\left(t^{2\alpha_M-\alpha_0-1},t^{2\alpha_0-\alpha_M-1},1\right)d t<\infty.
\end{aligned}$$
Thus, for all $n\in\mathbb N$, we have $D_t^K u_n=\partial_t^K u_n$ and we deduce that, for all $p\in\mathbb C_+$, we have
$$
\wh{D_t^K u_n}(p,\,\cdot\,)=p^{\alpha(\cdot)}\wh{u_n}(p,\,\cdot\,).
$$
Therefore, using the fact that for all $p\in\mathbb C$ satisfying $\re\,p>p_1$, $\wh{u_n}(p,\,\cdot\,)$ solves \eqref{d2a} with $F=F_n$ and $u_0\equiv0$, we deduce that
$$
\wh{D_t^K u_n}(p,\,\cdot\,)=p^{\alpha(\cdot)}\wh{u_n}(p,\,\cdot\,)=-\rho^{-1}\mathcal A\wh u_n(p,\,\cdot\,)+\rho^{-1}\wh{F_n}(p,\cdot)=\wh{w_n}(p,\,\cdot\,),\quad p\in\mathbb C,\ \re\,p>p_1,
$$
where $w_n(t, x)=-\mathcal A u_n(t, x)+F_n(t,x)$, $(t, x)\in \R_+\times\Omega$.
Combining this with the uniqueness and the analyticity of the Laplace transform in time of $u_n$, we deduce that the identity
\[
\rho(x) D_t^{K}u_n(t, x)+\mathcal A u_k(t, x)=F_n(t,x),\quad (t, x)\in\R_+\times\Omega
\]
holds true in the sense of distributions on $\R_+\times\Omega$. In the same way, using the fact that $u_n\in C^1([0,\infty);L^2(\Omega))$ with $u_n(0,\,\cdot\,)\equiv0$, we deduce that $I_Ku_n\in C^1([0,\infty);L^2(\Omega))$ and
\begin{equation}\label{t1c}
I_Ku_n(0, x)=0,\quad  x\in\Omega.
\end{equation}
From now on, we will prove that the above properties can be extended by density to $u$. Fix $T_1>0$. Applying this last identity, we will show that $D_t^K u_n$ converges in the sense of $L^1(0,T_1;D'(\Omega))$ to $-\mathcal A u(t, x)+F(t,x)$ as $n\to\infty$, and then we will complete the proof of the theorem.

Define the space $L^2(\Omega;\rho dx)$ corresponding to the space of function $L^2$ on $\Omega$ with a measure of density $\rho$. We define the operator $A_*=\rho^{-1}\mathcal A$ acting on $L^2(\Omega;\rho dx)$ with domain $D(A_*)=\{v\in H^1_0(\Omega):\ \rho^{-1}\mathcal A v\in L^2(\Omega)\}$ and we recall that $A_*$ is a selfadjoint operator with a compact resolvent whose spectrum consists of a non-deacreasing unbounded positive eigenvalues. In view of Proposition \ref{p1}, we have $u\in L^1_{loc}(\R_+;L^2(\Omega))$ and we deduce that $\rho^{-1}\mathcal A u\in L^1(0,T_1;D(A_*^{-1}))$. Moreover, 
there exists $C>0$ depending only on $\mathcal A$, $\rho$ and $\Omega$ such that, for all $n\in\mathbb N$, we have
\begin{equation}\label{t1d}
\|\rho^{-1}\mathcal A (u_n- u)\|_{L^1(0,T_1;D(A_*^{-1}))}\leq C\|u_n-u\|_{L^1(0,T_1;L^2(\Omega))}.
\end{equation}
In addition, we have
$$
u_n(t,\,\cdot\,)-u(t,\,\cdot\,)=\int_0^tS(t-\tau)[F_n(\tau)-F(\tau)]d\tau,\quad t\in\R_+,\ n\in\mathbb N,
$$ and applying Lemma \ref{l1} and Young's convolution inequality, we obtain
$$\begin{aligned}
\|u_n-u\|_{L^1(0,T_1;L^2(\Omega))}&\leq C\|t^{2\alpha_0-\alpha_M-1}\|_{L^1(0,T_1)}\|F_n-F\|_{L^1(0,T_1;L^2(\Omega))}\\
&\leq C\|(1+t)^{-J}F_n-F\|_{L^1(0,T_1;L^2(\Omega))}\leq C\|(1+t)^{-J}F_n-F\|_{L^1(\R_+;L^2(\Omega))}\end{aligned}
$$
and \eqref{p1a} implies that
$$
\lim_{n\to\infty}\|\rho^{-1}\mathcal A u_n-\rho^{-1}\mathcal A u\|_{L^1(0,T_1;D(A_*^{-1}))}=0.
$$
In the same way, we have 
$$
\lim_{n\to\infty}\|\rho^{-1}F_n-\rho^{-1}F\|_{L^1(0,T_1;D(A_*^{-1}))}\leq C\lim_{n\to\infty}\|(1+t)^{-J}F_n-F\|_{L^1(\R_+;L^2(\Omega))} =0
$$
and it follows that $(D_t^K u_n)_{n\in\mathbb N}$ converges in the sense of $L^1(0,T_1;D(A_*^{-1}))$ to $-\rho^{-1}\mathcal A u+F$ as $n\to\infty$. On the other hand, for all $\psi\in  C^\infty_0(0,T_1)$, we have 
\bel{t1e}\begin{aligned}
\langle D_t^K u_n(t,\,\cdot\,),\psi(t)\rangle_{D'(0,T_1),C^\infty_0(0,T_1)} & =\left\langle\partial_t I_Ku_k(t,\,\cdot\,),\psi(t) \right\rangle_{D'(0,T_1),C^\infty_0(0,T_1)}\nonumber\\
& =-\left\langle I_Ku_n(t,\,\cdot\,),\psi'(t)\right\rangle_{D'(0,T_1),C^\infty_0(0,T_1)}.
\end{aligned}\ee
In addition, repeating the above arguments and applying \eqref{p1a}, one can check that the sequence $(u_n)_{n\in\mathbb N}$ converges to $u$ in the sense of $L^1(0,T_1;L^2(\Omega))$ as $n\to\infty$ and, applying again Young's convolution inequality, we deduce that the sequence $I_Ku_n$ converges to $I_Ku$
in the sense of $L^1(0,T_1;L^2(\Omega))$ as $n\to\infty$. Thus, sending $n\to\infty$, we find
$$\begin{aligned}
\lim_{n\to\infty}\langle D_t^K u_n(t,\,\cdot\,),\psi(t)\rangle_{D'(0,T_1),C^\infty_0(0,T_1)} & =-\lim_{n\to\infty}\left\langle I_Ku_n(t,\,\cdot\,),\psi'(t)\right\rangle_{D'(0,T_1),C^\infty_0(0,T_1)}\\
& =-\left\langle I_Ku(t,\,\cdot\,),\psi'(t) \right\rangle_{D'(0,T_1),C^\infty_0(0,T_1)}\\
& =\left\langle \partial_tI_Ku(t,\,\cdot\,),\psi(t)\right\rangle_{D'(0,T_1),C^\infty_0(0,T_1)}\\
& =\langle D_t^K u(t,\,\cdot\,),\psi(t)\rangle_{D'(0,T_1),C^\infty_0(0,T_1)}.
\end{aligned}$$
It follows that $D_t^K u_n$ converges in the sense of $D'(0,T_1;L^2(\Omega))$ to $D_t^K u$ as $n\to\infty$. Therefore, by the uniqueness of the limit in the sense of $D'(0,T_1;D(A_*^{-1}))$, we deduce that 
\[
D_t^K u(t,x)=-\rho^{-1}\mathcal A u(t, x)+\rho^{-1}(x)F(t,x),\quad t\in(0,T_1),\ x\in\Omega
\]
holds true and $D_t^K u\in L^1(0,T_1;D(A_*^{-1}))$. Using the fact that $T_1>0$ is arbitrarily chosen, we deduce that condition (i) of Definition \ref{d1} holds true. In addition, using the fact $D_t^K u=\partial_t I_Ku$, we obtain that $I_Ku\in W^{1,1}_{loc}(\R_+;D(A_*^{-1}))$. Recalling that $\mathcal C^\infty_0(\Omega)\subset D(A_*)$, one can check that $D(A_*^{-1})\subset D'(\Omega)$ which implies that $I_Ku\in W^{1,1}_{loc}(\R_+;D'(\Omega))$. Combining this with \eqref{p1a}, \eqref{t1d} and fixing $T_1=1$, we deduce that
$$\begin{aligned}
& \quad\,\limsup_{n\to\infty}\|I_Ku_n-I_Ku\|_{W^{1,1}(0,1;D(A_*^{-1}))}\\
& \leq\limsup_{n\to\infty}\|I_Ku_n-I_Ku\|_{L^1(0,1;D(A_*^{-1}))}+\|D_t^K u_k-D_t^K u\|_{L^1(0,1;D(A_*^{-1}))}\\
& \leq C\left(\limsup_{n\to\infty}\|u_n-u\|_{L^1(0,1;L^2(\Omega))}+\limsup_{k\to\infty}\|-\rho^{-1}[\mathcal A (u_k-u)+F_n-F] \|_{L^1(0,1;D(A_*^{-1}))}\right)\\
& \leq0.
\end{aligned}$$
Thus, we have that $(I_Ku_n)_{n\in\mathbb N}$ converge to $I_Ku$ in the sense of $W^{1,1}(0,1;D(A_*^{-1}))$ and from 
\eqref{t1c} we deduce that \eqref{d1b} is fulfilled. This proves that the Laplace-weak solution $u\in L^1_{loc}(\R_+;L^2(\Omega))$ of \eqref{eq1}, given by \eqref{sol1}, satisfies condition (ii) of Definition \ref{d1} which implies that $u$ satisfies all the conditions of Definition \ref{d1}. Thus \eqref{sol1} is a weak solution of \eqref{eq1} in the sense of Definition \ref{d1}. This completes the proof of the theorem when $u_0\equiv0$.

{\bf Step 3.} In this step we will prove that the Laplace-weak solution $u\in L^1_{loc}(\R_+;L^2(\Omega))$ of \eqref{eq1}, given by \eqref{sol1}, is the weak solution of \eqref{eq1} in the sense of Definition \ref{d1} when $F\equiv0$. Again, since the Laplace-weak solution $u\in L^1_{loc}(\R_+;L^2(\Omega))$ of \eqref{eq1} clearly satisfies condition (iii) of Definition \ref{d1}, we only need to check conditions (i) and (ii). For this purpose, let us first consider the following intermediate result.
\begin{lem}\label{l3} Let $\delta>0$ and $\theta\in(\pi/2,\pi)$. Then we have
\bel{l3a}\frac{1}{2i\pi}\int_{\gamma(\delta,\theta)}\frac{e^{t p}}{p} d p=1,\quad t\in\R_+.\ee
\end{lem}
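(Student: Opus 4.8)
The plan is to evaluate the contour integral by the residue theorem, exploiting the fact that $\theta>\pi/2$ forces the two rays of $\gamma(\delta,\theta)$ into the open left half-plane. Fix $t>0$ together with $\delta>0$ and $\theta\in(\pi/2,\pi)$. The integrand $p\mapsto e^{tp}/p$ is holomorphic on $\C\setminus\{0\}$ and has a simple pole at $p=0$ with residue $e^{t\cdot 0}=1$. First I would check that the improper integral over $\gamma(\delta,\theta)$ converges: on each ray $p=s\,e^{\pm i\theta}$ one has $\re p=s\cos\theta$ with $\cos\theta<0$, so $|e^{tp}/p|=e^{ts\cos\theta}/s$ and $\int_\delta^\infty e^{ts\cos\theta}s^{-1}\,ds<\infty$; hence the left-hand side of \eqref{l3a} is well defined.

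Next I would close the contour. For $R>\delta$, let $\gamma_R$ denote $\gamma(\delta,\theta)$ truncated to $\{|p|\le R\}$ (the two ray segments joined by the arc $\gamma_0$), and let $\Gamma_R=\{R\,e^{i\phi}:\ \phi\in[\theta,2\pi-\theta]\}$ be the large arc through the left half-plane running counterclockwise from $R\,e^{i\theta}$ to $R\,e^{-i\theta}$. Since $\gamma(\delta,\theta)$ is oriented counterclockwise, the concatenation $\gamma_R\cup\Gamma_R$ is a positively oriented simple closed curve encircling the origin exactly once, so the residue theorem gives $\int_{\gamma_R\cup\Gamma_R}e^{tp}p^{-1}\,dp=2i\pi$.

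It remains to discard the closing arc as $R\to\infty$, which is the only point requiring care and hence the main (if mild) obstacle. On $\Gamma_R$ one has $\re p=R\cos\phi\le R\cos\theta<0$ for every $\phi\in[\theta,2\pi-\theta]$, since $\cos$ attains its maximum over this range at the endpoints $\phi=\theta$ and $\phi=2\pi-\theta$; therefore $|e^{tp}|\le e^{tR\cos\theta}$ uniformly on $\Gamma_R$. Combined with $|p|^{-1}=R^{-1}$ and the length bound $|\Gamma_R|=(2\pi-2\theta)R$, this yields $\big|\int_{\Gamma_R}e^{tp}p^{-1}\,dp\big|\le (2\pi-2\theta)\,e^{tR\cos\theta}\to 0$ as $R\to\infty$, where both $t>0$ and $\cos\theta<0$ are used. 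Letting $R\to\infty$ in the residue identity and invoking the convergence established above then gives $\int_{\gamma(\delta,\theta)}e^{tp}p^{-1}\,dp=2i\pi$, which is \eqref{l3a} after dividing by $2i\pi$. I note that the computation produces the value $1$ for every admissible pair $(\delta,\theta)$, so it simultaneously re-confirms the independence of the integral on these parameters.
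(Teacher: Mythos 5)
Your argument is correct and is essentially the same as the paper's: both apply the residue theorem to the truncated contour closed by the large arc $\{Re^{i\phi}:\phi\in[\theta,2\pi-\theta]\}$, pick up the residue $1$ at $p=0$, and kill the arc contribution via the bound $|e^{tp}|\le e^{tR\cos\theta}$ with $\cos\theta<0$. Your explicit check that the improper integral over the infinite rays converges is a small added tidiness, not a difference in method.
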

We postpone the proof this lemma to the end of the present demonstration. Fix $(u_{0,n})_{n\in\mathbb N}$ a sequence of $\mathcal C^\infty_0(\Omega)$ such that 
\bel{t1f} \lim_{n\to\infty}\norm{u_{0,n}-u_0}_{L^2(\Omega)}=0\ee
and consider
$$u_n(t,\cdot)=S_0(t)u_{0,n}.$$
Fix $n\in\mathbb N$. In view of \eqref{l3a} and \eqref{S0}, we have
$$\begin{aligned}u_n(t,\cdot)-u_{0,n}&=\frac{1}{2i\pi}\int_{\gamma(\delta,\theta)}\frac{e^{t p}}{p} \left(A+\rho p^{\alpha(\cdot)}\right)^{-1}\rho p^{\alpha(\cdot)} u_{0,n}-\left(\frac{1}{2i\pi}\int_{\gamma(\delta,\theta)}\frac{e^{t p}}{p} d p\right)u_{0,n}\\
\ &=\frac{1}{2i\pi}\int_{\gamma(\delta,\theta)}\frac{e^{t p}}{p} \left[\left(A+\rho p^{\alpha(\cdot)}\right)^{-1}\rho p^{\alpha(\cdot)} u_{0,n}-u_{0,n}\right]dp.\end{aligned}$$
On the other hand, for all $p\in\mathbb C\setminus(-\infty,0]$, we have
$$\left(A+\rho p^{\alpha(\cdot)}\right)^{-1}\rho p^{\alpha(\cdot)} u_{0,n}-u_{0,n}=-\left(A+\rho p^{\alpha(\cdot)}\right)^{-1}Au_{0,n}$$
and it follows that
$$u_n(t,\cdot)-u_{0,n}=-\frac{1}{2i\pi}\int_{\gamma(\delta,\theta)}\frac{e^{t p}}{p}\left(A+\rho p^{\alpha(\cdot)}\right)^{-1}Au_{0,n}dp.$$
On the other hand, in view of \cite[Proposition 2.1]{KSY} there exists a constant $C>0$ depending on $\mathcal A$, $\rho$, $\Omega$ and $\alpha$ such that, for all $p\in \gamma(\delta,\theta)$, we have
\bel{t1g}
\left\|\left(A+\rho p^{\alpha(\cdot)}\right)^{-1}\right\|_{\mathcal B(L^2(\Omega ))}
\leq C\max\left(|p|^{\alpha_0-2\alpha_M},|p|^{\alpha_M-2\alpha_0}\right).\ee
Therefore, for all $p\in \gamma(\delta,\theta)$ and all $t\in\R_+$, we have
$$\norm{\frac{e^{t p}}{p}\left(A+\rho p^{\alpha(\cdot)}\right)^{-1}Au_{0,n}}_{L^2(\Omega)}\leq e^{t\re p}C\max\left(|p|^{\alpha_0-2\alpha_M-1},|p|^{\alpha_M-2\alpha_0-1}\right),$$
$$\norm{\partial_t\frac{e^{t p}}{p}\left(A+\rho p^{\alpha(\cdot)}\right)^{-1}Au_{0,n}}_{L^2(\Omega)}\leq e^{t\re p}C\max\left(|p|^{\alpha_0-2\alpha_M},|p|^{\alpha_M-2\alpha_0}\right).$$
It follows that 
\bel{t1i}\partial_tu_n(t,\cdot)=\partial_t[u_n(t,\cdot)-u_{0,n}]=-\frac{1}{2i\pi}\int_{\gamma(\delta,\theta)}e^{t p}\left(A+\rho p^{\alpha(\cdot)}\right)^{-1}Au_{0,n}dp=-S_1(t)Au_{0,n}.\ee
Combining this with Lemma \ref{l1} and the fact that $Au_{0,n}\in L^2(\Omega)$, we deduce that the map $u_n$ is lying in $W^{1,1}_{loc}(\R_+;L^2(\Omega))$. Moreover, we have 
$$u_n(0,\cdot)-u_{0,n}=-\frac{1}{2i\pi}\int_{\gamma(\delta,\theta)}p^{-1}\left(A+\rho p^{\alpha(\cdot)}\right)^{-1}Au_{0,n}dp.$$
Let us prove that
\bel{t1h}u_n(0,\cdot)-u_{0,n}=-\frac{1}{2i\pi}\int_{\gamma(\delta,\theta)}p^{-1}\left(A+\rho p^{\alpha(\cdot)}\right)^{-1}Au_{0,n}dp\equiv0.\ee
For this purpose, we fix $\delta<1$, $R>1$ and we consider the contour 
\bel{cont2}\gamma(\delta,R,\theta):=\gamma_-(\delta,R,\theta)\cup\gamma_0(\delta,\theta)\cup\gamma_+(\delta,R,\theta)\ee
oriented in the counterclockwise direction, where $\gamma_0(\delta,\theta)$ is given by \eqref{g2} and where
\[\gamma_\pm(\delta,R,\theta):=\{r e^{\pm i\theta}:\  r\in(\delta,R)\}.
\]
Applying the Cauchy formula,  for any  $R>1$, we have
$$\frac{1}{2i\pi}\int_{\gamma(\delta,R,\theta)}p^{-1}\left(A+\rho p^{\alpha(\cdot)}\right)^{-1}Au_{0,n}d p=\frac{1}{2i\pi}\int_{\gamma_0(R,\theta)}p^{-1}\left(A+\rho p^{\alpha(\cdot)}\right)^{-1}Au_{0,n} d p$$
with $\gamma_0(R,\theta)$ given by \eqref{g2} with $\delta=R$. Sending $R\to+\infty$, we obtain 
$$\frac{1}{2i\pi}\int_{\gamma(\delta,\theta)}p^{-1}\left(A+\rho p^{\alpha(\cdot)}\right)^{-1}Au_{0,n}dp=\lim_{R\to+\infty}\frac{1}{2i\pi}\int_{\gamma_0(R,\theta)}p^{-1}\left(A+\rho p^{\alpha(\cdot)}\right)^{-1}Au_{0,n}dp.$$
On the other hand, applying \eqref{t1g}, we deduce that
$$\begin{aligned}&\norm{\frac{1}{2i\pi}\int_{\gamma_0(R,\theta)}p^{-1}\left(A+\rho p^{\alpha(\cdot)}\right)^{-1}Au_{0,n} d p}_{L^2(\Omega)}\\
&\leq C\int_{-\theta}^{\theta}\norm{\left(A+\rho(Re^{i\beta})^{\alpha(\cdot)}\right)^{-1}}_{\mathcal B(L^2(\Omega)}\norm{Au_{0,n}}_{L^2(\Omega)} d \beta\\
\ &\leq C\max\left(R^{\alpha_0-2\alpha_M},R^{\alpha_M-2\alpha_0}\right)\norm{Au_{0,n}}_{L^2(\Omega)}.\end{aligned}$$
In view of \eqref{alpha}, it follows
$$\frac{1}{2i\pi}\int_{\gamma(\delta,\theta)}p^{-1}\left(A+\rho p^{\alpha(\cdot)}\right)^{-1}Au_{0,n}d p=\lim_{R\to+\infty}\frac{1}{2i\pi}\int_{\gamma_0(R,\theta)}p^{-1}\left(A+\rho p^{\alpha(\cdot)}\right)^{-1}Au_{0,n} d p\equiv0.$$
This proves \eqref{t1f} and  in a similar way to Step 2, we deduce that  $I_K[u_n-u_{0,n}]\in W^{1,1}_{loc}(\R_+;L^2(\Omega))$ satisfies
\begin{equation}\label{t1h}
I_K[u_n-u_{0,n}](0,x)=0,\quad  x\in\Omega.
\end{equation}
Therefore, $u_n$ satisfies condition (ii) of Definition \ref{d1}. Now let us show that $u_n$ satisfies condition (ii) of Definition \ref{d1}. Applying Lemma \ref{l1}  and \eqref{t1i}, we deduce that there exists a constant $C>0$ such that, for all $t>0$, we have
$$\begin{aligned}&\norm{u_n(t,\cdot)}_{L^2(\Omega)}+\norm{\partial_tu_n(t,\cdot)}_{L^2(\Omega)}\\
&\leq \norm{S_0(t)u_{0,n}}_{L^2(\Omega)}+\norm{S_1(t)A u_{0,n}}_{L^2(\Omega)}\\
\ &\leq C\max\left(t^{2(\alpha_M-\alpha_0)},
t^{2(\alpha_0-\alpha_M)},t^{2\alpha_M-\alpha_0-1},t^{2\alpha_0-\alpha_M-1},1\right)\norm{u_{0,n}}_{D(A)}.\end{aligned}$$
Combining this with \eqref{alpha}, for all $p\in\mathbb C_+$, we obtain $t\mapsto e^{-pt}I_K[u_n(t,\cdot)-u_{0,n}]\in W^{1,1}(\R_+;L^2(\Omega))$. Thus, for all $n\in\mathbb N$ and all $p\in\mathbb C_+$, fixing $v_n(t,\cdot)=u_n(t,\cdot)-u_{0,n}$, $t>0$, and applying \eqref{t1h}, we get
$$\begin{aligned}
\wh{D_t^K v_n}(p,\,\cdot\,)&=\int_0^{+\infty}e^{-pt}\partial_tI_K [u_n(t,\cdot)-u_{0,n}]dt\\
&=p\int_0^{+\infty}e^{-pt}I_K [u_n(t,\cdot)-u_{0,n}]dt\\
&=p\left(\int_0^{+\infty}e^{-pt}I_K u_n(t,\cdot)dt-u_{0,n}\int_0^{+\infty}e^{-pt}\frac{t^{1-\alpha(\cdot)}}{\Gamma(2-\alpha(\cdot))}dt\right)\\
&=p^{\alpha(\cdot)}\wh{u_n}(p,\,\cdot\,)- p^{\alpha(\cdot)-1}u_{0,n}.\end{aligned}
$$
Therefore, using the fact that for all $p\in\mathbb C$ satisfying $\re\,p>p_1$, $\wh{u_n}(p,\,\cdot\,)$ solves \eqref{d2a} with $F\equiv0$ and $u_0=u_{0,n}$, we deduce that
$$
\wh{D_t^K v_n}(p,\,\cdot\,)=p^{\alpha(\cdot)}\wh{u_n}(p,\,\cdot\,)- p^{\alpha(\cdot)-1}u_{0,n}=-\rho^{-1}\mathcal A\wh u_n(p,\,\cdot\,),\quad p\in\mathbb C,\ \re\,p>p_1.
$$
Then in a similar way to Step 2, we find that the identity \[
\rho(x) D_t^{K}[u_n-u_{0,n}](t, x)+\mathcal A u_n(t, x)=0,\quad (t, x)\in\R_+\times\Omega
\]
holds true in the sense of distributions on $\R_+\times\Omega$. We will now extend this result by density to the Laplace-weak solution of \eqref{eq1} which is given by \eqref{sol1} with $F\equiv0$. For this purpose, fix $T_1>0$.  Let us first observe that applying Lemma \ref{l1}, we obtain
$$\lim_{n\to+\infty}\|u_n-u\|_{L^1(0,T_1;L^2(\Omega))}\leq C\|t^{2(\alpha_0-\alpha_M)}\|_{L^1(0,T_1)}\lim_{n\to+\infty}\|u_{0,n}-u_0\|_{L^2(\Omega)}=0.$$
Therefore, repeating the arguments of Step 2, we can prove  that $D_t^K u_n$ converges in the sense of $D'(0,T_1;L^2(\Omega))$ to $D_t^K u$ and in the sense of $L^1(0,T_1;D'(\Omega))$ to $\rho^{-1}\mathcal A u$ as $n\to\infty$. Then, repeating the arguments used in the last part of Step 2 we deduce that the Laplace-weak solution $u$ of \eqref{eq1} fulfills the condition (i) and (ii) of Definition \ref{d1}. This completes the proof of Theorem \ref{t1}.\qed

Now that we have completed the proof of Theorem \ref{t1}, let us consider the proof of Lemma \ref{l3}.

\textbf{Proof of Lemma $\ref{l3}$.} Let us first recall that for all $t>0$ the map $z\mapsto \frac{e^{t z}}{z}$ is meromorphic on $\mathbb C$ with a simple pole at  $z=0$. Therefore, the residue theorem implies that for all $R>\delta$ we have 
\bel{l3b}\frac{1}{2i\pi}\int_{\gamma(\delta,R,\theta)}\frac{e^{t p}}{p} d p=1+\frac{1}{2i\pi}\int_{\gamma_1(R,\theta)}\frac{e^{t p}}{p} d p,\quad t\in\R_+,\ee
where we recall that $\gamma$ is given by \eqref{cont2} and $\gamma_1(R,\theta)$ is given by 
$$\gamma_1(R,\theta):=\{R\, e^{i\beta}:\ \beta\in[\theta, 2\pi-\theta]\}.$$
 Sending $R\to+\infty$, we obtain 
$$\frac{1}{2i\pi}\int_{\gamma(\delta,\theta)}\frac{e^{t p}}{p}dp=\lim_{R\to+\infty}\frac{1}{2i\pi}\int_{\gamma(\delta,R,\theta)}\frac{e^{t p}}{p} d p.$$
On the other hand, we have
$$\begin{aligned}\abs{\frac{1}{2i\pi}\int_{\gamma_1(R,\theta)}\frac{e^{t p}}{p} d p}&\leq \frac{1}{2\pi}\int_{\theta}^{2\pi-\theta} e^{tR\cos \beta}  d \beta\\
\ &\leq C e^{tR\cos\theta}.\end{aligned}$$
In view of \eqref{l3b} and the fact that $\theta\in(\pi/2,\pi)$, we find
$$\frac{1}{2i\pi}\int_{\gamma(\delta,\theta)}\frac{e^{t p}}{p}d p=1+\lim_{R\to+\infty}\frac{1}{2i\pi}\int_{\gamma_1(R,\theta)}\frac{e^{t p}}{p} d p=1,\quad t\in\R_+.$$ This proves \eqref{l3a} and it completes the proof of Lemma \ref{l3}.\qed

\section{Distributed order fractional diffusion equations}

In this section, we prove the unique existence of a weak
solution to the problem \eqref{eq1} as well as the equivalence between Definition \ref{d1} and \ref{d2} of weak and Laplace-weak solutions of \eqref{eq1} for weight $K$ given by \eqref{Kdistributed} with $\mu\in L^\infty(0,1)$ a non-negative function satisfying \eqref{mu}. For this purpose, let us first recall that the unique existence of Laplace-weak solutions for \eqref{eq1} has been proved by \cite{LKS} in the case of source terms $F\in L^\infty(\R_+;L^2(\Omega))$ and extended to source terms $F\in L^1(\R_+;L^2(\Omega))$ by \cite[Proposition 5.1]{KSXY}. We will recall here the representation of Laplace-weak solutions of \eqref{eq1} given by these works. Like in the previous section we denote by $A_*$ the operator $\rho^{-1}\mathcal A$ acting in the space $L^2(\Omega;\rho dx)$ with Dirichlet boundary condition. Let $(\varphi_n)_{n\geq1}$ be an $L^2(\Omega;\rho d x)$ orthonormal basis of eigenfunctions of the operator $A_*$ associated with the non-decreasing sequence of eigenvalues $(\lambda_n)_{n\geq1}$
of $A_*$ repeated with respect to there multiplicity. According to  \cite[Proposition 2.1]{LKS}, the unique Laplace weak solution $u$ of \eqref{eq1} enjoys the following representation formula
\bel{di1}
u(t,\cdot)=S_{0,\mu}(t)u_0+ \int_0^t S_{1,\mu}(t-s) F(s,\cdot) ds,\ t \in \R_+,
\ee
where
\bel{Smu0} 
S_{0,\mu}(t) \psi := \sum_{n=1}^\infty \frac{1}{2i\pi}  \int_{\gamma(\delta,\theta)}  e^{pt}(\lambda_n+\vartheta(p))^{-1} \frac{\vartheta(p)}{p}\left\langle \psi,\phi_n\right\rangle_{L^2(\Omega;\rho dx)}\phi_n,\ \psi \in L^2(\Omega),
\ee
\bel{Smu1} 
S_{1,\mu}(t) \psi := \sum_{n=1}^\infty \frac{1}{2i\pi}  \int_{\gamma(\delta,\theta)}  e^{pt}(\lambda_n+\vartheta(p))^{-1} \left\langle \rho^{-1}\psi,\phi_n\right\rangle_{L^2(\Omega;\rho dx)}\phi_n,\ \psi \in L^2(\Omega),
\ee
where $\vartheta(p):=\int_0^1 p^{\alpha} \mu(\alpha) d \alpha$, $\theta\in(\pi/2,\pi)$, $\delta>0$ and $\gamma(\delta,\theta)$ corresponds to the contour \eqref{cont1}. According to \cite{LKS}, the map $S_{j,\mu}$ is independent of the choice of $\theta\in(\pi/2,\pi)$, $\delta>0$. We start by proving an extension of this result  to source terms $F\in\mathcal J$ and by proving that the Laplace-weak solution $u$ given by 
\eqref{di1} is lying in $L^1_{loc}(\R_+;L^2(\Omega))$. For this purpose, like in the previous section, we need the following intermediate result about the operator valued functions $S_{0,\mu}$ and $S_{1,\mu}$.

\begin{lem}\label{l4}
Let $\theta\in\left(\frac{\pi}{2},\pi\right)$. The maps $t\longmapsto S_{\mu,j}(t)$, $j=0,1$, defined by
\eqref{Smu0}-\eqref{Smu1} are lying in $L^1_{loc}(\R_+;\mathcal B(L^2(\Omega))$ and there
exists a constant $C>0$ depending only on $\mathcal A,\rho,\theta,\mu,\Omega$
such that the estimates
\begin{equation}\label{l4a}
\|S_{0,\mu}(t)\|_{\mathcal B(L^2(\Omega))}\leq
C\max\left(t^{\alpha_0-\epsilon-1},t^{\alpha_0}\right),\quad t>0,
\end{equation}
\begin{equation}\label{l4b}
\|S_{1,\mu}(t)\|_{\mathcal B(L^2(\Omega))}\leq
C\max\left(t^{\alpha_0-\epsilon-1},t^{\alpha_0-1}\right),\quad t>0,
\end{equation}
hold true.
\end{lem}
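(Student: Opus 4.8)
The plan is to exploit the fact that, in contrast with the variable order case, the kernel \eqref{Kdistributed} is independent of $x$, so that the symbol $\vartheta(p)=\int_0^1 p^\alpha\mu(\alpha)\,d\alpha$ is scalar and the operators \eqref{Smu0}--\eqref{Smu1} act diagonally in the $L^2(\Omega;\rho dx)$-orthonormal eigenbasis $(\varphi_n)_{n\geq1}$ of $A_*$. First I would record that, since \eqref{eq-rho} makes the norms of $L^2(\Omega)$ and $L^2(\Omega;\rho dx)$ equivalent, it suffices to bound the operators in $\mathcal B(L^2(\Omega;\rho dx))$, where they are Fourier multipliers: setting
\[
m_n^{0}(t)=\frac{1}{2i\pi}\int_{\gamma(\delta,\theta)}e^{pt}(\lambda_n+\vartheta(p))^{-1}\frac{\vartheta(p)}{p}\,dp,\qquad m_n^{1}(t)=\frac{1}{2i\pi}\int_{\gamma(\delta,\theta)}e^{pt}(\lambda_n+\vartheta(p))^{-1}\,dp,
\]
one has $\|S_{j,\mu}(t)\|_{\mathcal B(L^2(\Omega;\rho dx))}=\sup_{n\geq1}|m_n^{j}(t)|$. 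Thus the whole lemma reduces to scalar estimates on $m_n^{j}(t)$ that are \emph{uniform in} $n\geq1$, exactly as in the proof of Lemma \ref{l1}.

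Second, and this is the heart of the matter, I would establish the two symbol estimates controlling these multipliers. For the resolvent factor: since for $p\in\gamma(\delta,\theta)$ every $p^\alpha$ has argument in $[-\theta,\theta]$, the value $\vartheta(p)$ lies in the convex sector $\{|\arg z|\leq\theta\}$, and projecting onto a bisecting direction while using $\theta<\pi$ produces a constant $c=c(\theta)>0$ with
\[
|\lambda_n+\vartheta(p)|\geq c\big(\lambda_n+|\vartheta(p)|\big),\qquad p\in\gamma(\delta,\theta),\ n\geq1,
\]
which is the scalar counterpart of \eqref{ll2b} and can be obtained as in \cite[Proposition 2.1]{LKS}; in particular $|\vartheta(p)|/|\lambda_n+\vartheta(p)|\leq 1/c$. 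The size of $\vartheta$ itself is then read off from \eqref{mu}: projecting $\vartheta(p)$ onto the bisector again and retaining only the interval $(\alpha_0-\epsilon,\alpha_0)$, on which $\mu\geq\mu(\alpha_0)/2$, gives $|\vartheta(re^{i\beta})|\geq c\,\tfrac{\mu(\alpha_0)}{2}\int_{\alpha_0-\epsilon}^{\alpha_0}r^\alpha\,d\alpha$, whence by the monotonicity of $\alpha\mapsto r^\alpha$ one finds $|\vartheta(p)|\geq c\,r^{\alpha_0}$ for $r=|p|\leq1$ and $|\vartheta(p)|\geq c\,r^{\alpha_0-\epsilon}$ for $r\geq1$. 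These are exactly the two exponents $\alpha_0$ and $\alpha_0-\epsilon$ occurring in \eqref{l4a}--\eqref{l4b}, and condition \eqref{mu} is used only here; extracting these correct powers from that single structural hypothesis is, in my view, the main obstacle, the sectorial lower bound being comparatively standard.

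Finally, I would estimate the contour integrals by the device already used in Lemma \ref{l1}: fix $\delta=t^{-1}$ and split $\gamma(t^{-1},\theta)=\gamma_-\cup\gamma_0\cup\gamma_+$ as in \eqref{cont1}--\eqref{g2}. On the arc $\gamma_0(t^{-1},\theta)$ one has $|e^{pt}|\leq e$ and $|dp|=t^{-1}d\beta$, so inserting the lower bounds on $|\vartheta|$ (distinguishing $t\geq1$, i.e.\ $r\leq1$, from $t\leq1$) yields boundary contributions $t^{\alpha_0-1}$ and $t^{\alpha_0-\epsilon-1}$ for $m_n^{1}$, and a bounded contribution for $m_n^{0}$. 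On the rays $\gamma_\pm$ one uses the decay $|e^{pt}|=e^{tr\cos\theta}$ with $\cos\theta<0$ together with the substitution $s=tr$, turning $\int_{t^{-1}}^{1}e^{tr\cos\theta}r^{-\alpha_0}\,dr$ and $\int_{1}^{\infty}e^{tr\cos\theta}r^{-(\alpha_0-\epsilon)}\,dr$ into $t^{\alpha_0-1}$ and $t^{\alpha_0-\epsilon-1}$ times convergent integrals. For $m_n^{0}$ the extra factor $\vartheta(p)/p$ is absorbed through $|\vartheta(p)|/|\lambda_n+\vartheta(p)|\leq 1/c$ and $\int_{\gamma}|e^{pt}|\,|p|^{-1}|dp|=O(1)$ (where Lemma \ref{l3} may also be invoked), which gives at least the claimed $\max(t^{\alpha_0-\epsilon-1},t^{\alpha_0})$. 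Collecting the pieces yields \eqref{l4a}--\eqref{l4b} with a constant depending only on $\mathcal A,\rho,\theta,\mu,\Omega$, and since $\alpha_0-\epsilon-1>-1$ these bounds are integrable near $t=0$, so that $S_{j,\mu}\in L^1_{loc}(\R_+;\mathcal B(L^2(\Omega)))$, completing the proof.
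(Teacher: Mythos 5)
Your proof is correct and follows essentially the same route as the paper: the key resolvent bound $|\lambda_n+\vartheta(p)|^{-1}\leq C\max(|p|^{-\alpha_0+\epsilon},|p|^{-\alpha_0})$ (which the paper simply quotes from \cite[Lemma 2.2]{LKS} and from \cite[Proposition 5.1]{KSXY} for the $S_{1,\mu}$ part, and which you re-derive from \eqref{mu} via sectoriality of $\vartheta$), followed by the contour split at $\delta=t^{-1}$ exactly as in Lemma \ref{l1}. Your observation that $|\vartheta(p)|/|\lambda_n+\vartheta(p)|\leq 1/c$ yields for the $S_{0,\mu}$ multiplier a bound uniform in $t$, which is slightly stronger than, and implies, \eqref{l4a}.
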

\begin{proof} For the proof of these results for $S_{1,\mu}$ one can refer to \cite[Proposition 5.1]{KSXY} and we only show the above properties for $S_{0,\mu}$. For this purpose, we recall the following estimate from \cite[Lemma 2.2]{LKS},
\bel{est3}
\frac{1}{\abs{\vartheta(p)+\lambda_n}}\leq C\max(\abs{p}^{-\alpha_0+\epsilon},\abs{p}^{-\alpha_0}),\ p \in \C \setminus (-\infty,0],\ n \in \N,
\ee
where the positive constant $C$ depends only on $\mu$. We recall also that
$$
|\vartheta(p)|\leq C\max(\abs{p},1),\ p \in \C \setminus (-\infty,0],
$$
where the positive constant $C$ depends only on $\mu$. Therefore, we have
\bel{l1c}
\frac{|\vartheta(p)|}{|p|\abs{\vartheta(p)+\lambda_n}}\leq C\max(\abs{p}^{-\alpha_0+\epsilon},\abs{p}^{-1-\alpha_0}),\ p \in \C \setminus (-\infty,0],\ n \in \N,
\ee
where the positive constant $C$ depends only on $\mu$.
For all $t \in (0,+\infty)$ and all $\psi \in L^2(\Omega)$,  by taking $\delta=t^{-1}$ in \eqref{cont1},  \eqref{l1c} implies
\bel{l1d}\begin{aligned}
&  \norm{\sum_{n=1}^{+\infty}  \left( \int_{\gamma_0(t^{-1},\theta)} \frac{\vartheta(p) e^{pt}}{p(\vartheta(p)+\lambda_n)} dp \right) \langle \psi, \varphi_{n}\rangle_{L_\rho^2(\Omega)} \varphi_{n,k}}_{L^2(\Omega)}\\
&\leq  C\norm{\sum_{n=1}^{+\infty}  \left( \int_{\gamma_0(t^{-1},\theta)} \frac{\vartheta(p)e^{pt}}{p(\vartheta(p)+\lambda_n)} dp \right) \langle \psi, \varphi_{n}\rangle_{L_\rho^2(\Omega;\rho dx)} \varphi_{n,k}}_{L^2(\Omega;\rho dx)}\\
&\leq  C \max(t^{\alpha_0-\epsilon-1},t^{\alpha_0}) \left(\int_{-\theta}^{\theta}e^{\cos \beta}d\beta\right) \norm{\psi}_{L^2(\Omega;\rho dx)}\leq C  \max(t^{\alpha_0-\epsilon-1},t^{\alpha_0})\norm{\psi}_{L^2(\Omega)},\end{aligned}
\ee

\beas
& & \norm{\sum_{n=1}^{+\infty}
\left( \int_{\gamma_\pm(t^{-1},\theta)}\frac{\vartheta(p) e^{pt}}{p(\vartheta(p)+\lambda_n)} dp \right)
\langle \psi,\varphi_{n}\rangle_{L^2(\Omega;\rho dx)} \varphi_{n}}_{L^2(\Omega)}\\
&\leq & C \left(\int_{t^{-1}}^{+\infty} \max( r^{-\alpha_0+\epsilon},r^{-\alpha_0-1} ) e^{t r \cos \theta} dr\right) \norm{\psi}_{L^2(\Omega)}\\
&\leq & C t^{-1} \left(\int_{1}^{+\infty} \max ( (t^{-1} r)^{-\alpha_0+\epsilon}, (t^{-1} r)^{-\alpha_0-1}  ) e ^{r \cos \theta} dr\right) \norm{\psi}_{L^2(\Omega)}\\
&\leq&  C \max(t^{\alpha_0-\epsilon-1},t^{\alpha_0}) \left(\int_{1}^{+\infty} e^{r\cos\theta}dr\right) \norm{\psi}_{L^2(\Omega)}.
 \eeas
Putting these two estimates together with \eqref{g2} we deduce \eqref{l4a} and the fact that $S_{0,\mu}\in L^1_{loc}(\R_+;\mathcal B(L^2(\Omega))$.

\end{proof}

Combining Lemma \ref{l4} with the arguments used in Proposition \ref{p1}, we obtain the following result about the unique existence of Laplace-weak solution for problem \eqref{eq1}.

\begin{prop}\label{p2} Assume that the conditions \eqref{ell}-\eqref{eq-rho} are fulfilled.
Let $u_0\in L^2(\Omega)$, $F\in\mathcal J$, $\mu\in L^\infty(0,1)$ be a non-negative function satisfying \eqref{mu} and let $K$ be given by \eqref{Kdistributed}. Then there exists a unique Laplace-weak
solution $u\in L^1_{loc}(\R_+;L^2(\Omega))$ to \eqref{eq1} given by \eqref{di1}.
\end{prop}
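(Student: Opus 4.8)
The plan is to mirror the structure of the proof of Proposition \ref{p1}, replacing the operator family $S_0,S_1$ by $S_{0,\mu},S_{1,\mu}$ and invoking Lemma \ref{l4} in place of Lemma \ref{l1}. By the linearity of the problem and the already-established representation \eqref{di1} for $F\in L^\infty(\R_+;L^2(\Omega))$ coming from \cite{LKS,KSXY}, it suffices to treat $u_0\equiv0$ and to extend the solution formula from $F\in L^\infty(\R_+;L^2(\Omega))$ (or $L^1$) to a general $F\in\mathcal J$ by a density argument. First I would write $G(t,x)=(1+t)^{-J}F(t,x)\in L^1(\R_+;L^2(\Omega))$ and pick $G_n\in\mathcal C^\infty_0(\R_+\times\Omega)$ with $G_n\to G$ in $L^1(\R_+;L^2(\Omega))$, then set $F_n=(1+t)^J G_n$ so that \eqref{p1a} holds with $F_n\in L^\infty(\R_+;L^2(\Omega))$.

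Next I would define $u(t,\cdot)=\int_0^t S_{1,\mu}(t-\tau)F(\tau,\cdot)\,d\tau$ and $u_n(t,\cdot)=\int_0^t S_{1,\mu}(t-\tau)F_n(\tau,\cdot)\,d\tau$ as elements of $L^1_{loc}(\R_+;L^2(\Omega))$. The key quantitative input is the bound \eqref{l4b}, namely $\|S_{1,\mu}(t)\|_{\mathcal B(L^2(\Omega))}\leq C\max(t^{\alpha_0-\epsilon-1},t^{\alpha_0-1})$, whose singular exponents $\alpha_0-\epsilon-1$ and $\alpha_0-1$ both exceed $-1$ (since $\alpha_0,\epsilon\in(0,1)$ with $\epsilon<\alpha_0$), so that the kernel is locally integrable and Young's convolution inequality applies. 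For each $p\in\mathbb C_+$ I would estimate $\|e^{-pt}u(t,\cdot)\|_{L^2(\Omega)}$ by the convolution of $e^{-\re p\,t}\max(t^{\alpha_0-\epsilon-1},t^{\alpha_0-1})$ with $e^{-\re p\,t}\|F(t,\cdot)\|_{L^2(\Omega)}$, and then bound the latter factor by $(\sup_t(1+t)^J e^{-\re p\,t})\|(1+t)^{-J}F\|_{L^1(\R_+;L^2(\Omega))}$. This shows $\wh u(p)\in L^2(\Omega)$ for all $p\in\mathbb C_+$; the identical computation applied to $u_n-u$ together with \eqref{p1a} gives $\|\wh{u_n}(p)-\wh u(p)\|_{L^2(\Omega)}\to0$.

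Finally I would identify the limit. Since each $F_n\in L^\infty(\R_+;L^2(\Omega))$, the representation of \cite{LKS,KSXY} guarantees that $\wh{u_n}(p,\cdot)=(A_*+\vartheta(p))^{-1}\rho^{-1}\wh{F_n}(p,\cdot)$ (equivalently $\wh{u_n}(p,\cdot)\in H^1_0(\Omega)$ solves \eqref{d2a} with $F=F_n$, $u_0\equiv0$, using $p\wh K(p,\cdot)=\vartheta(p)$ for $K$ given by \eqref{Kdistributed}). Using \eqref{p1a} one checks $\wh{F_n}(p)\to\wh F(p)$ in $L^2(\Omega)$ for every $p\in\mathbb C_+$, and combining this with the uniform resolvent bound \eqref{est3} shows that $\wh{u_n}(p)$ converges to the solution of \eqref{d2a}; by uniqueness of the limit with \eqref{p1c} this forces $\wh u(p)$ to solve \eqref{d2a}, so $\wh u(p,\cdot)\in H^1_0(\Omega)$ and $u$ satisfies conditions (i)--(ii) of Definition \ref{d2}. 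Uniqueness follows as before from the invertibility of $A_*+\vartheta(p)$ (via \eqref{est3}) together with the analyticity and injectivity of the Laplace transform. I expect the main obstacle to be purely bookkeeping: verifying that the singular exponents in \eqref{l4b} are strictly above $-1$ so that all the convolution estimates make sense, and confirming that the spectral series defining $S_{1,\mu}$ converges in $\mathcal B(L^2(\Omega))$ uniformly enough that the interchange of summation, contour integration, and the limit $n\to\infty$ is justified; the algebraic identification of the Laplace transform with the resolvent is then routine once $p\wh K(p,\cdot)=\vartheta(p)$ is recorded.
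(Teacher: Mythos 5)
Your proposal is correct and follows essentially the same route as the paper, which proves Proposition \ref{p2} precisely by combining Lemma \ref{l4} with the density argument of Proposition \ref{p1} (the kernel bound \eqref{l4b} with exponents above $-1$, Young's inequality, and the identification $p\wh K(p,\cdot)=\vartheta(p)$ via the resolvent bound \eqref{est3}). Nothing essential is missing.
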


Armed with this result, we can now complete the proof of Theorem \ref{t2}.

\textbf{Proof of Theorem $\ref{t2}$.} Let us first observe that the first statement of Theorem \ref{t2} is a direct consequence of Proposition \ref{p2}. Moreover, the uniqueness of weak solutions in the sense of Definition \ref{d1} as well as the fact that, for $u_0\equiv0$,  \eqref{eq1} admits a unique weak solution $u\in L^1_{loc}(\R_+;L^2(\Omega))$ in the sense of Definition \ref{d1} given by \eqref{di1}, can be deduced by mimicking the proof of Theorem \ref{t1}. For this purpose, we only show that $u$ given by \eqref{di1} is a weak solution of \eqref{eq1} in the sense of Definition \ref{d1} when $F\equiv0$. Since the Laplace-weak solution $u\in L^1_{loc}(\R_+;L^2(\Omega))$ of \eqref{eq1} clearly satisfies condition (iii) of Definition \ref{d1}, we only need to check condition (i) and (ii).  Fix $(u_{0,n})_{n\in\mathbb N}$ a sequence of $\mathcal C^\infty_0(\Omega)$ such that \eqref{t1f} is fulfilled
and consider
$$u_n(t,\cdot)=S_{0,\mu}(t)u_{0,n},\quad t\in\R_+.$$
Fix $n\in\mathbb N$. In view of \eqref{l3a} and \eqref{Smu0}, we have
$$\begin{aligned}&u_n(t,\cdot)-u_{0,n}\\
&=\sum_{k=1}^\infty \left(\frac{1}{2i\pi}  \int_{\gamma(\delta,\theta)}  \frac{\vartheta(p)e^{pt}}{p(\lambda_k+\vartheta(p))} dp\right)\left\langle u_{0,n},\phi_k\right\rangle_{L^2(\Omega;\rho dx)}\phi_k-\left(\frac{1}{2i\pi}\int_{\gamma(\delta,\theta)}\frac{e^{t p}}{p} d p\right)u_{0,n}\\
&=\sum_{k=1}^\infty \left(\frac{1}{2i\pi}  \int_{\gamma(\delta,\theta)} \frac{e^{pt}}{p} \left( \frac{\vartheta(p)}{(\lambda_k+\vartheta(p))} -1\right)dp\right)\left\langle u_{0,n},\phi_k\right\rangle_{L^2(\Omega;\rho dx)}\phi_k\\
&=-\sum_{k=1}^\infty \left(\frac{1}{2i\pi}  \int_{\gamma(\delta,\theta)} \frac{e^{pt}}{p}  \frac{\lambda_k}{(\lambda_k+\vartheta(p))} dp\right)\left\langle u_{0,n},\phi_k\right\rangle_{L^2(\Omega;\rho dx)}\phi_k\\
&=-\sum_{k=1}^\infty \left(\frac{1}{2i\pi}  \int_{\gamma(\delta,\theta)}   \frac{e^{pt}}{p(\lambda_k+\vartheta(p))}dp\right) \left\langle A_*u_{0,n},\phi_k\right\rangle_{L^2(\Omega;\rho dx)}\phi_k.\end{aligned}$$
On the other hand, applying \eqref{est3}, for all $k\in\mathbb N$ and all $p\in\mathbb C\setminus(-\infty,0]$, we have
$$\abs{\frac{e^{pt}}{p(\lambda_k+\vartheta(p))} \left\langle A_*u_{0,n},\phi_k\right\rangle_{L^2(\Omega;\rho dx)}}\leq C\max\left(|p|^{-\alpha_0+\epsilon-1},|p|^{-\alpha_0-1}\right)\abs{\left\langle A_*u_{0,n},\phi_k\right\rangle_{L^2(\Omega;\rho dx)}},$$
$$\abs{\partial_t\left(\frac{e^{pt}}{p(\lambda_k+\vartheta(p))} \left\langle A_*u_{0,n},\phi_k\right\rangle_{L^2(\Omega;\rho dx)}\right)}\leq C\max\left(|p|^{-\alpha_0+\epsilon},|p|^{-\alpha_0}\right)\abs{\left\langle A_*u_{0,n},\phi_k\right\rangle_{L^2(\Omega;\rho dx)}}.$$
Combining this with the fact that $u_{0,n}\in D(A_*)$, we deduce that
\bel{t2c}\partial_tu_n(t,\cdot)=-\sum_{k=1}^\infty \frac{1}{2i\pi}  \int_{\gamma(\delta,\theta)}   \frac{e^{pt}}{\lambda_k+\vartheta(p)} \left\langle A_*u_{0,n},\phi_k\right\rangle_{L^2(\Omega;\rho dx)}\phi_k=-S_{1,\mu}(t)Au_{0,n}.\ee
Applying Lemma \ref{l4} and the fact that $Au_{0,n}\in L^2(\Omega)$, we deduce that the map $u_n$ is lying in $W^{1,1}_{loc}(\R_+;L^2(\Omega))$. Moreover, we have 
$$u_n(0,\cdot)-u_{0,n}=-\sum_{k=1}^\infty \left(\frac{1}{2i\pi}  \int_{\gamma(\delta,\theta)}   \frac{1}{p(\lambda_k+\vartheta(p))}dp\right) \left\langle A_*u_{0,n},\phi_k\right\rangle_{L^2(\Omega;\rho dx)}\phi_k.$$
Applying the arguments used at the end of the proof of \cite[Proposition 2.1]{LKS}, we obtain
$$\frac{1}{2i\pi}  \int_{\gamma(\delta,\theta)}   \frac{1}{p(\lambda_k+\vartheta(p))}dp=0,\quad k\in\mathbb N$$
and it follows that    $I_K[u_n-u_{0,n}]\in W^{1,1}_{loc}(\R_+;L^2(\Omega))$ satisfies
\begin{equation}\label{t2d}
I_K[u_n-u_{0,n}](0,x)=0,\quad  x\in\Omega.
\end{equation}
This proves that $u_n$ satisfies condition (ii) of Definition \ref{d1}. Combining Lemma \ref{l4}, Proposition \ref{p1} with the arguments used in the last step of the proof of Theorem \ref{t2}, we can show that $u_n$ satisfies also condition (i). In the same way, using Lemma \ref{l4}, Proposition \ref{p1} and repeating the arguments used in the last step of the proof of Theorem \ref{t2}, we can show that, by density these properties can be extended to $u$. This proves  that the Laplace-weak solution $u$ of \eqref{eq1}, given by \eqref{di1}, fulfills the condition (i) and (ii) of Definition \ref{d1} and it completes the proof of Theorem \ref{t2}.\qed

\section{Multiterm fractional diffusion equations}

In this section, we prove the unique existence of a weak
solution to the problem \eqref{eq1} as well as the equivalence between Definition \ref{d1} and \ref{d2} of weak and Laplace-weak solutions of \eqref{eq1} for weight $K$ given by \eqref{Kmultiple} with $1<\alpha_1<\ldots<\alpha_N<1$ and $\rho_j\in L^\infty(\Omega)$, $j=1,\ldots,N$, satisfying \eqref{eq-rho} with $\rho=\rho_j$. In contrast to variable order and distributed order fractional diffusion equations, we have not find any result in the mathematical literature showing the unique existence of Laplace-weak solutions for multiterm fractional diffusion equations. For this purpose, we will consider first the proof of this result.

For all $p\in\mathbb C\setminus(-\infty,0]$, we can consider the following operator
$$\left(A+\sum_{k=1}^N\rho_k(x)p^{\alpha_k}\right)^{-1}\in \mathcal B(L^2(\Omega)).$$
For all $\theta\in(0,\pi)$ we denote by $\mathcal D_\theta$ the following set $ \mathcal D_{\theta}:=\{re^{i\beta}:\ r>0,\ \beta\in (-\theta,\theta)\}$.
Inspired by \cite[Proposition 2.1]{KSY}, we start with the following properties of the above operator.
\begin{lem}\label{l5} Let $\theta\in(0,\pi)$. Then there exists a constant $C>0$ depending only on $\mathcal A$, $\rho_1,\ldots,\rho_N$, $\alpha_1,\ldots,\alpha_N$, $\Omega$ and $\theta$ such that
\begin{equation}\label{l5a} \norm{\left(A+\sum_{k=1}^N\rho_k(x)z^{\alpha_k}\right)^{-1}}_{B(L^2(\Omega))}\leq C|z|^{-\alpha_N},\quad z\in \mathcal D_\theta.\end{equation}
\end{lem}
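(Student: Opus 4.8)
The plan is to establish \eqref{l5a} by an energy (coercivity) argument on the sesquilinear form associated with the operator, following the strategy of \cite[Proposition 2.1]{KSY} but tracking the contributions of all the powers $z^{\alpha_1},\dots,z^{\alpha_N}$ simultaneously. Write $z=re^{i\beta}$ with $r>0$ and $\beta\in(-\theta,\theta)$, and for $u\in H^1_0(\Omega)$ introduce the form
$$B_z(u,u)=\int_\Omega\Big(\sum_{i,j=1}^d a_{i,j}\partial_{x_j}u\,\overline{\partial_{x_i}u}+q|u|^2\Big)dx+\sum_{k=1}^N z^{\alpha_k}\int_\Omega\rho_k|u|^2dx.$$
First I would record the two structural facts that make the argument work: since $a$ is real and symmetric and $q\geq0$, the first integral, call it $E$, is real and nonnegative, and by ellipticity \eqref{ell} together with the Poincaré inequality it controls $\norm{u}_{H^1_0(\Omega)}^2$ from below; and since each $\rho_k$ obeys \eqref{eq-rho}, the masses $m_k:=\int_\Omega\rho_k|u|^2dx$ satisfy $m_k\geq c_0\norm{u}_{L^2(\Omega)}^2>0$ for $u\neq0$.

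The core of the proof is a rotation of the complex plane adapted to the sector. For $\beta\geq0$ each power $z^{\alpha_k}=r^{\alpha_k}e^{i\alpha_k\beta}$ has argument $\alpha_k\beta\in[0,\alpha_N\theta]$, and since $\alpha_N<1$ and $\theta<\pi$ we have $\alpha_N\theta<\pi$. Setting $\phi=\alpha_N\theta/2\in(0,\pi/2)$ and multiplying $B_z(u,u)$ by $e^{-i\phi}$, every term $e^{-i\phi}z^{\alpha_k}m_k$ then has argument in $[-\phi,\phi]$ and the term $e^{-i\phi}E$ has argument $-\phi$, so all real parts are nonnegative; keeping only the $k=N$ contribution yields
$$|B_z(u,u)|\geq\re\big(e^{-i\phi}B_z(u,u)\big)\geq\cos(\alpha_N\theta/2)\,r^{\alpha_N}m_N\geq\cos(\alpha_N\theta/2)\,c_0\,r^{\alpha_N}\norm{u}_{L^2(\Omega)}^2.$$
The case $\beta<0$ is identical after replacing $\phi$ by $-\phi$ (equivalently, by conjugating, since $\bar z\in\mathcal D_\theta$). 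Because $\alpha_N\theta/2<\pi/2$ the constant $\cos(\alpha_N\theta/2)$ is strictly positive, which is exactly where the hypotheses $\alpha_N<1$ and $\theta<\pi$ enter.

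From this coercivity estimate both the well-posedness and the bound follow. The same lower bound, now retaining $\re(e^{-i\phi}E)\geq\cos(\alpha_N\theta/2)E$ and invoking the $H^1_0$-coercivity of $E$, shows that $e^{-i\phi}B_z$ is coercive on $H^1_0(\Omega)$ in the $H^1_0$-norm; since $B_z$ is also bounded on $H^1_0(\Omega)$, the Lax--Milgram lemma gives, for every $f\in L^2(\Omega)$, a unique $u\in H^1_0(\Omega)$ with $B_z(u,v)=\langle f,v\rangle_{L^2(\Omega)}$ for all $v\in H^1_0(\Omega)$, and elliptic regularity places $u$ in $H^2(\Omega)\cap H^1_0(\Omega)=D(A)$ so that $u=(A+\sum_{k=1}^N\rho_k z^{\alpha_k})^{-1}f$. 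Taking $v=u$ and combining $|B_z(u,u)|\leq\norm{f}_{L^2(\Omega)}\norm{u}_{L^2(\Omega)}$ with the displayed lower bound gives $\norm{u}_{L^2(\Omega)}\leq(\cos(\alpha_N\theta/2)\,c_0)^{-1}r^{-\alpha_N}\norm{f}_{L^2(\Omega)}$, which is precisely \eqref{l5a} with $C=(\cos(\alpha_N\theta/2)\,c_0)^{-1}$. The main obstacle is the angular bookkeeping: one must choose the rotation so that no term in the sum contributes a negative real part, and this is possible uniformly only because the largest order satisfies $\alpha_N\theta<\pi$. The variable coefficients $\rho_k$ prevent any simultaneous diagonalization with $A$, so this energy argument, rather than a spectral one, is essential.
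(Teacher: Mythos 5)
Your proof is correct, but it takes a genuinely different route from the paper's. The paper follows \cite[Proposition 2.1]{KSY} and argues in two regimes: for $|\beta|$ bounded away from $0$ it factorizes $A+\sum_k\rho_k z^{\alpha_k}=B_\beta\left(B_\beta^{-1}H_zB_\beta^{-1}+i\right)B_\beta$, where $B_\beta$ is multiplication by $\left(\sum_k\rho_k r^{\alpha_k}\sin(\beta\alpha_k)\right)^{1/2}$ and $H_z$ is the self-adjoint part, obtaining a bound of the form $c_0^{-1}\max\left(|\sin(\alpha_1\beta)|^{-1},|\sin(\alpha_N\beta)|^{-1}\right)r^{-\alpha_N}$ that degenerates as $\beta\to0$; it then treats a neighbourhood of the positive real axis by a separate Neumann-series perturbation of $H_z^{-1}$, and takes the worse of the two constants. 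Your single rotation of the sesquilinear form by $e^{-i\phi}$ with $\phi=\alpha_N\theta/2$ is the numerical-range version of the same underlying observation (after rotation every term lies in a closed subsector of half-angle $<\pi/2$), but it replaces both steps at once: it is uniform in $\beta$ including $\beta=0$, it needs no reduction to $|z|>1$, and it yields the explicit constant $C=(c_0\cos(\alpha_N\theta/2))^{-1}$. The one caveat is your appeal to elliptic regularity to place the Lax--Milgram solution in $H^2(\Omega)\cap H^1_0(\Omega)$: on a Lipschitz domain with $a\in L^\infty(\Omega)\cap H^1(\Omega)$ this is not available, so the resolvent should be understood through the variational realization of $A$ with domain $\{v\in H^1_0(\Omega):\ \mathcal A v\in L^2(\Omega)\}$; this is a defect of the paper's own setup as well and does not affect the estimate \eqref{l5a} itself.
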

\begin{proof} Let us observe that, since the spectrum of $A$ is discrete and contained into $\R_+$, it is enough to prove \eqref{l5a} with $z\in \mathcal D_\theta$ satisfying $|z|>1$. For this purpose, from now on we fix $z=re^{i\beta}$ with $r\in[1,+\infty)$, $\beta\in(-\theta,\theta)$ and we will show \eqref{l5a}. In all this proof $c_0$ and $C_0$ denote the constants appearing in \eqref{eq-rho}. We divide the proof of this result into two steps.

\textbf{Step 1:} In this step we will prove that for all $\beta\in(-\theta,\theta)\setminus \{0\}$, we have
\begin{equation}\label{l5d}
\left\| \left(A+\sum_{k=1}^N\rho_k(x)z^{\alpha_k}\right)^{-1} \right\|_{\mathcal B(L^2(\Omega))} \leq c_0^{-1} \max\left(|\sin(\alpha_1\beta)|^{-1},|\sin(\alpha_N\beta)|^{-1}\right) r^{-\alpha_N}.\end{equation}
For this purpose, we assume that $\beta \in (0,\theta)$, the case of $\beta \in (-\theta,0)$ being treated in a similar fashion. Let $B_\beta$ be the multiplier in $L^2(\Omega)$, by the function 
$$ b_\beta(x) := \left( \sum_{k=1}^N\rho_k(x) r^{\alpha_k} \sin(\beta \alpha_k) \right)^{1 \slash 2},\ x \in \Omega, $$
in such a way that $i B_\beta^2$ is the skew-adjoint part of the operator $A+\sum_{k=1}^N\rho_k(x)r^{\alpha_k}e^{i\beta\alpha_k}$.
Applying \eqref{eq-rho}, we obtain 
$$ 0 < c_0^{1\slash2} \min\left(\sin(\alpha_1\beta)^{1 \slash 2},\sin(\alpha_N\beta)^{1 \slash 2}\right) r^{\alpha_N \slash 2}\leq b_\beta(x),\quad x\in\Omega,$$
$$b_\beta(x)\leq (NC_0)^{1 \slash 2} \max\left(\sin(\alpha_1\beta)^{1 \slash 2},\sin(\alpha_N\beta)^{1 \slash 2}\right)r^{\alpha_N \slash 2},\ x \in \Omega.$$
Hence, the self-adjoint operator $B_\beta$ is bounded and boundedly invertible in $L^2(\Omega)$, with
\begin{equation}\label{l5b}
\| B_\beta^{-1} \|_{\mathcal B(L^2(\Omega))} \leq c_0^{-1\slash2} \max\left(\sin(\alpha_1\beta)^{-1 \slash 2},\sin(\alpha_N\beta)^{-1 \slash 2}\right) r^{-\alpha_N \slash 2}.
\end{equation}
Moreover, for each $z=r e^{i \beta}$, it holds true that
\begin{equation}\label{l5c}
A + \sum_{k=1}^N\rho_k(x)z^{\alpha_k} = B_\beta \left( B_\beta^{-1} H_{z} B_\beta^{-1} + i \right) B_\beta,,
\end{equation}
where $H_z:= A + \sum_{k=1}^N\rho_k(x) r^{\alpha_k}\cos(\beta \alpha_k)$. It is clear that  the operator $H_z$ is self-adjoint in $L^2(\Omega)$ with domain $D(H_z)=D(A)$, by the Kato-Rellich theorem. Thus, $B_\beta^{-1} H_z B_\beta^{-1}$ is self-adjoint in $L^2(\Omega)$ as well, with domain $B_\beta D(A)$.
Therefore, the operator $B_\beta^{-1} H_z B_\beta^{-1}+i$ is invertible in $L^2(\Omega)$ and satisfies the estimate
$$ \| ( B_\beta^{-1} H_z B_\beta^{-1}+i )^{-1} \|_{\mathcal B(L^2(\Omega))} \leq 1. $$
It follows from this and \eqref{l1c} that $A_q+\sum_{k=1}^N\rho_k(x)z^{\alpha_k}$ is invertible in $L^2(\Omega)$, 
with
$$ \left(A+\sum_{k=1}^N\rho_k(x)z^{\alpha_k}\right)^{-1} = B_\beta^{-1} (U_\beta^{-1} H_z B_\beta^{-1} +i )^{-1} B_\beta^{-1}, $$
showing that $\left(A+\sum_{k=1}^N\rho_k(x)z^{\alpha_k}\right)^{-1}$ maps $L^2(\Omega)$ into $B_\beta^{-1} D(B_\beta^{-1} H_z B_\beta^{-1})=D(A)$.  As a consequence, we infer from \eqref{l5b} that
$$\begin{aligned}
\left\| \left(A+\sum_{k=1}^N\rho_k(x)z^{\alpha_k}\right)^{-1} \right\|_{\mathcal B(L^2(\Omega))} &\leq \| (B_\beta^{-1} H_z B_\beta^{-1}+i )^{-1} \|_{\mathcal B(L^2(\Omega))} \| B_\beta^{-1} \|_{\mathcal B(L^2(\Omega))}^2 \\
&\leq c_0^{-1} \max\left(|\sin(\alpha_1\beta)|^{-1},|\sin(\alpha_N\beta)|^{-1}\right) r^{-\alpha_N}.\end{aligned}$$
From this last estimate we deduce \eqref{l5d}.

\textbf{Step 2:} We fix $\theta_*\in(0,\min(\theta,\pi/2))$ such that 
\begin{equation}\label{l5e}\frac{|\sin(\alpha_N\theta_*)|}{\cos(\alpha_N\theta_*)}\leq \frac{c_0}{2C_0N}.\end{equation}
In this step, we will prove that for all $\beta\in(-\theta_*,\theta_*)$ we have
\begin{equation}\label{l5f}
\left\| \left(A+\sum_{k=1}^N\rho_k(x)z^{\alpha_k}\right)^{-1} \right\|_{\mathcal B(L^2(\Omega))} \leq 2c_0^{-1} \cos(\alpha_N\theta_*)^{-1} r^{-\alpha_N},\end{equation}
where we recall that since $\theta_*\in(0,\min(\theta,\pi/2))$ we have $\cos(\alpha_N\theta_*)>0$.
Using the fact that the operator $A$ is positive, for all $v\in D(A)$ and $\beta\in(-\theta_*,\theta_*)$, we get
$$\begin{aligned}\norm{H_zv}_{L^2(\Omega)}\norm{v}_{L^2(\Omega)}&\geq\left\langle H_zv,v\right\rangle_{L^2(\Omega)}\\
\ &\geq \left\langle Av,v\right\rangle_{L^2(\Omega)}+\left\langle \sum_{k=1}^N\rho_k \cos(\beta \alpha_k)v,v\right\rangle_{L^2(\Omega)}\\
\ &\geq \int_\Omega \left(\sum_{k=1}^N\rho_kr^{\alpha_k} \cos(\beta \alpha_k)\right)|v|^2dx\\
\ &\geq \int_\Omega \rho_Nr^{\alpha_N} \cos(\beta \alpha_N)|v|^2dx\\
\ &\geq c_0\cos(\theta_*\alpha_N)r^{\alpha_N}\norm{v}_{L^2(\Omega)}^2.\end{aligned}$$
 Choosing $v=H_z^{-1}u$ in the above inequality, we obtain
\begin{equation}\label{l5g}\norm{H_z^{-1}u}_{L^2(\Omega)}\leq c_0^{-1}\cos(\alpha_N\theta_*)^{-1}r^{-\alpha_N}\norm{u}_{L^2(\Omega)}\end{equation}
and we deduce that $\norm{H_z^{-1}}_{\mathcal B(L^2(\Omega))}\leq c_0^{-1}\cos(\alpha_N\theta_*)^{-1}r^{-\alpha_N}$.
Therefore, for all $\beta\in(-\theta_*,\theta_*)$, we get 
$$\begin{aligned}\norm{-iH_z^{-1}B_\beta^{2}}_{\mathcal B(L^2(\Omega))}&\leq \norm{H_z^{-1}}_{\mathcal B(L^2(\Omega))}\norm{B_\beta}_{\mathcal B(L^2(\Omega))}^{2}\\
\ &\leq \frac{C_0Nr^{\alpha_N}\sin(\alpha_N\beta)}{c_0\cos(\theta_*\alpha_N)r^{\alpha_N}}\leq \frac{C_0N\sin(\alpha_N\theta_*)}{c_0\cos(\theta_*\alpha_N)}\end{aligned}$$
and, combining this with \eqref{l5e}, we find
$$\norm{-iH_z^{-1}B_\beta^{2}}_{\mathcal B(L^2(\Omega))}\leq \frac{1}{2}.$$
Therefore, for all $\beta\in(-\theta_*,\theta_*)$, the operator $(Id+iH_z^{-1}B_\beta^{2})^{-1}$ is invertible and we have
$$\norm{(Id+iH_z^{-1}B_\beta^{2})^{-1}}_{\mathcal B(L^2(\Omega))}\leq \frac{1}{1-\norm{-iH_z^{-1}B_\beta^{2}}_{\mathcal B(L^2(\Omega))}}\leq 2.$$
It follows that
$$\left(A+\sum_{k=1}^N\rho_k(x)z^{\alpha_k}\right)^{-1}=(Id+iH_z^{-1}B_\beta^{2})^{-1}H_z^{-1},\quad \beta\in(-\theta_*,\theta_*)$$
and applying \eqref{l5g} we deduce \eqref{l5f}. Combining \eqref{l5d} and \eqref{l5f}, we deduce \eqref{l5a} by choosing $$C=\min(c_0^{-1} \sin(\alpha_N\theta)^{-1}, c_0^{-1} \sin(\alpha_1\theta_*)^{-1},2c_0^{-1} \cos(\alpha_N\theta_*)^{-1}).$$ This completes the proof of the lemma.\end{proof}

We fix $\theta\in(\frac{\pi}{2},\pi)$, $\delta\in\R_+$ and, applying Lemma \ref{l5}, we consider  the operator $\mathcal R_j(t)\in\mathcal B(L^2(\Omega))$, $j=0,1$ and $t\in\R_+$, given by 
\begin{equation}\label{R0}
\mathcal R_0(t)h=\frac{1}{2i\pi}\int_{\gamma(\delta,\theta)}e^{t p}\left(A+\sum_{k=1}^N\rho_kp^{\alpha_k}\right)^{-1}\left(\sum_{k=1}^N\rho_kp^{\alpha_k-1}\right)h d p,\quad h\in L^2(\Omega),\ t\in\R_+,
\end{equation}
\begin{equation}\label{R1}
\mathcal R_1(t)h=\frac{1}{2i\pi}\int_{\gamma(\delta,\theta)}e^{t p}\left(A+\sum_{k=1}^N\rho_kp^{\alpha_k}\right)^{-1}h d p,\quad h\in L^2(\Omega),\ t\in\R_+
\end{equation}
Note that here since the map  $z\longmapsto\left(A+\sum_{k=1}^N\rho_k(x)z^{\alpha_k}\right)^{-1}$ is holomorphic on $\mathbb C\setminus(-\infty,0]$ as a map taking values in $\mathcal B(L^2(\Omega))$, the definition of $\mathcal R_j$, $j=0,1$, will be independent of the choice of $\delta$ and $\theta\in(\frac{\pi}{2},\pi)$. Let us consider
\bel{mul} u(t,\cdot)=\mathcal R_0(t)u_0+\int_0^t\mathcal R_1(t-s)F(s,\cdot)ds.\ee
Combining the arguments used in Lemma \ref{l1} with estimate \eqref{l5a}, we can show the following properties of the maps $\R_+\ni t\mapsto\mathcal R_j(t)$, $j=0,1$.

\begin{lem}\label{l6}
Let $\theta\in\left(\frac{\pi}{2},\pi\right)$. The maps $t\longmapsto S_j(t)$, $j=0,1$, defined by
\eqref{R0}-\eqref{R1} are lying in $L^1_{loc}(\R_+;\mathcal B(L^2(\Omega))$ and there
exists a constant $C>0$ depending only on $\mathcal A,\rho,\theta,\Omega$
such that the estimates
\begin{equation}\label{l6a}
\|\mathcal R_0(t)\|_{\mathcal B(L^2(\Omega))}\leq
C\max\left(t^{\alpha_N-\alpha_1},1\right),\quad t>0,
\end{equation}
\begin{equation}\label{6b}
\|\mathcal R_1(t)\|_{\mathcal B(L^2(\Omega))}\leq
C\max\left(t^{\alpha_N-1},1\right),\quad t>0,
\end{equation}
hold true.
\end{lem}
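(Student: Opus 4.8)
The plan is to follow the scheme of the proof of Lemma~\ref{l1} line by line, replacing the two-term resolvent bound \eqref{ll2b} by the clean single-power estimate \eqref{l5a} from Lemma~\ref{l5}. Since $\mathcal R_j$ is independent of $\delta$, for each fixed $t>0$ I would take $\delta=t^{-1}$ in the contour \eqref{cont1} and split
\[
\mathcal R_j(t)=H_-^{(j)}(t)+H_0^{(j)}(t)+H_+^{(j)}(t),
\]
where $H_m^{(j)}$ is the integral over the piece $\gamma_m(t^{-1},\theta)$, $m=0,\mp$. On the arc $\gamma_0(t^{-1},\theta)$ I parametrize $p=t^{-1}e^{i\beta}$, $\beta\in[-\theta,\theta]$, so that $|e^{tp}|=e^{\cos\beta}$ is bounded and $|p|=t^{-1}$; on the rays $\gamma_\mp(t^{-1},\theta)$ I set $p=re^{\pm i\theta}$ with $r\geq t^{-1}$, so that $|e^{tp}|=e^{rt\cos\theta}$ decays because $\cos\theta<0$.

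For $\mathcal R_1$ this is immediate: by \eqref{l5a} the integrand is controlled by $|e^{tp}|\,|p|^{-\alpha_N}$. The arc contributes $Ct^{-1}(t^{-1})^{-\alpha_N}=Ct^{\alpha_N-1}$, while on each ray the substitution $s=rt$ gives
\[
\int_{t^{-1}}^\infty e^{rt\cos\theta}r^{-\alpha_N}\,dr=t^{\alpha_N-1}\int_1^\infty e^{s\cos\theta}s^{-\alpha_N}\,ds\leq Ct^{\alpha_N-1},
\]
the last integral being a finite constant. Hence $\|\mathcal R_1(t)\|_{\mathcal B(L^2(\Omega))}\leq Ct^{\alpha_N-1}\leq C\max(t^{\alpha_N-1},1)$, and since $\alpha_N>0$ the power $t^{\alpha_N-1}$ is locally integrable at $t=0$, giving \eqref{6b} together with $\mathcal R_1\in L^1_{loc}(\R_+;\mathcal B(L^2(\Omega)))$.

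For $\mathcal R_0$ the only new ingredient is the multiplier $\sum_{k=1}^N\rho_kp^{\alpha_k-1}$, whose $L^\infty(\Omega)$ norm I bound by $C\max(|p|^{\alpha_N-1},|p|^{\alpha_1-1})$ (the largest exponent dominating for $|p|\geq1$ and the smallest for $|p|\leq1$). Combined with \eqref{l5a} the integrand of $\mathcal R_0$ is then controlled by $C|e^{tp}|\max(|p|^{-1},|p|^{\alpha_1-\alpha_N-1})$. Multiplying by $|dp|=t^{-1}d\beta$, the arc piece yields $C\max(1,t^{\alpha_N-\alpha_1})$. For the rays I would exploit that, since $\alpha_1-\alpha_N-1<-1$, one has $\max(r^{-1},r^{\alpha_1-\alpha_N-1})=r^{-1}$ for $r\geq1$ and $=r^{\alpha_1-\alpha_N-1}$ for $r\leq1$; splitting the ray integral at $r=1$ when $t>1$, the part over $(t^{-1},1)$ is bounded using $e^{rt\cos\theta}\leq1$ by $\int_{t^{-1}}^1 r^{\alpha_1-\alpha_N-1}\,dr\leq Ct^{\alpha_N-\alpha_1}$ and the part over $(1,\infty)$ stays bounded, while for $t\leq1$ the whole ray lies in $\{r\geq1\}$ and the estimate $r^{-1}\leq t$ produces a uniform constant. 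This gives $\|\mathcal R_0(t)\|_{\mathcal B(L^2(\Omega))}\leq C\max(1,t^{\alpha_N-\alpha_1})$, i.e.\ \eqref{l6a}, together with the boundedness near $t=0$ needed for membership in $L^1_{loc}$.

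I expect the main obstacle to be the bookkeeping for $\mathcal R_0$: the crossover of the multiplier $\sum_k\rho_kp^{\alpha_k-1}$ at $|p|=1$ forces the split into the regimes $t>1$ and $t\leq1$, and is precisely what produces the two-sided structure $\max(1,t^{\alpha_N-\alpha_1})$ in \eqref{l6a}. By contrast $\mathcal R_1$, governed by the clean single-power resolvent bound \eqref{l5a}, requires no such case distinction.
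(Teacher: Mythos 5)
Your proposal is correct and is exactly the argument the paper intends: the paper gives no separate proof of Lemma \ref{l6}, stating only that it follows by "combining the arguments used in Lemma \ref{l1} with estimate \eqref{l5a}", which is precisely the contour splitting at $\delta=t^{-1}$ and the single-power resolvent bound that you carry out. Your bookkeeping for the multiplier $\sum_k\rho_k p^{\alpha_k-1}$ (crossover at $|p|=1$, split into $t\leq 1$ and $t>1$) and the local integrability of $t^{\alpha_N-1}$ and $t^{\alpha_N-\alpha_1}$ near $t=0$ are all as needed.
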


This proves that, for $u_0\in L^2(\Omega)$ and $F\in\mathcal J$, $u$ given by \eqref{mul} is lying in $L^1_{loc}(\R_+;L^2(\Omega))$.
Let us prove that this function $u$ is the unique Laplace weak solution of \eqref{eq1} when $K$ is given by \eqref{Kmultiple}.
For this purpose, we need two intermediate results.

Combining the result of Lemma \ref{l5} with \cite[Theorem 1.1.]{KSY}, we deduce  the following.

\begin{lem}\label{l7} Let $u_0\in L^2(\Omega)$ and $F\in L^\infty(\R_+;L^2(\Omega)$. Then, the function
\bel{fin}v(\cdot,t)=\mathcal R_0(t)u_0+\int_0^t\mathcal R_1(t-s)F(s,\cdot)ds+ BF(t,\cdot),\quad t\in\R_+,\ee
is the unique   Laplace weak solution of \eqref{eq1}.
Here $B$ is defined by
$$Bh=\frac{1}{2i\pi}\int_{\gamma(\delta,\theta)}p^{-1}\left(A+\sum_{k=1}^N\rho_kp^{\alpha_k}\right)^{-1}h d p.$$

\end{lem}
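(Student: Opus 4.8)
The plan is to reduce Lemma \ref{l7} to the single–fractional–power result \cite[Theorem 1.1]{KSY}, using the resolvent estimate of Lemma \ref{l5} as a substitute for \cite[Proposition 2.1]{KSY}. First I would rewrite the resolvent equation \eqref{d2a} in the present setting. Since $\rho\equiv1$ and $K$ is given by \eqref{Kmultiple}, computing the Laplace transform of $t^{-\alpha_k}/\Gamma(1-\alpha_k)$ gives $\wh K(p,\cdot)=\sum_{k=1}^N\rho_k p^{\alpha_k-1}$, so that $p\wh K(p,\cdot)=\sum_{k=1}^N\rho_k p^{\alpha_k}$ and \eqref{d2a} becomes the boundary value problem associated with the operator $\mathcal L(p):=A+\sum_{k=1}^N\rho_k p^{\alpha_k}$, namely
$$\mathcal L(p)\wh u(p,\cdot)=\wh F(p,\cdot)+\Big(\sum_{k=1}^N\rho_k p^{\alpha_k-1}\Big)u_0,\quad \re p>p_1.$$
This is precisely the operator whose inverse is controlled, uniformly on the sector $\mathcal D_\theta$, by Lemma \ref{l5}.

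Second, I would observe that estimate \eqref{l5a} is the exact analogue, for the multiterm operator $\mathcal L(p)$, of the sectorial resolvent bound \cite[Proposition 2.1]{KSY} on which the whole construction of \cite[Theorem 1.1]{KSY} rests. Consequently the argument of \cite[Theorem 1.1]{KSY} can be repeated verbatim with $A+\rho p^{\alpha(\cdot)}$ replaced by $\mathcal L(p)$: the contour integrals \eqref{R0}--\eqref{R1} and the operator $B$ are well defined thanks to \eqref{l5a}, and, by Lemma \ref{l6}, the function $v$ defined by \eqref{fin} is a tempered distribution in $t$ with values in $L^2(\Omega)$ whose Laplace transform is holomorphic on $\mathbb C_+$.

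Third, I would verify that $\wh v(p,\cdot)$ solves the above boundary value problem, which by the invertibility of $\mathcal L(p)$ (Lemma \ref{l5}) and the injectivity of the Laplace transform yields both the existence and the uniqueness of the Laplace-weak solution. To this end I would compute the Laplace transform of each term of \eqref{fin} separately. Using the decay \eqref{l5a} and \eqref{l6a} to justify the interchange (Fubini) of the $t$-integral with the contour integral, and evaluating the resulting contour integrals by the residue/contour-closing computation already used in Lemma \ref{l3} and in Step 3 of the proof of Theorem \ref{t1}, I expect to obtain $\wh{\mathcal R_0(\cdot)u_0}(p)=\mathcal L(p)^{-1}\big(\sum_{k}\rho_k p^{\alpha_k-1}\big)u_0$ and $\wh{\mathcal R_1*F}(p)=\mathcal L(p)^{-1}\wh F(p,\cdot)$. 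It then remains to treat the term $BF$: since $B$ is a fixed bounded operator its transform is $B\wh F(p,\cdot)$, and closing the contour of $p^{-1}\mathcal L(p)^{-1}$ to the right—where this map is holomorphic and, by \eqref{l5a}, decays like $|p|^{-1-\alpha_N}$—shows that this contribution does not disturb the resolvent equation (in fact it shows that $B$ reduces to the zero operator, so that \eqref{fin} coincides with the Duhamel formula \eqref{mul}). Combining the three pieces gives $\mathcal L(p)\wh v=\wh F+\big(\sum_k\rho_k p^{\alpha_k-1}\big)u_0$, as required.

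The main obstacle is the third step: rigorously justifying the interchange of the Laplace integral in $t$ with the contour integral in $p$ for source terms $F\in L^\infty(\R_+;L^2(\Omega))$ that need not decay in time, and correctly evaluating the associated contour integrals—in particular pinning down the contribution of the $BF$ term so that $\wh v$ lands exactly on the equation \eqref{d2a}. Once the Laplace transform of $v$ has been identified, the identification of $v$ with the unique Laplace-weak solution follows immediately from the uniqueness of the solution of \eqref{d2a} and the uniqueness of the Laplace transform, and the reduction of \eqref{fin} to \eqref{mul} is then automatic.
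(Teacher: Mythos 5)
Your plan is sound and reaches the right conclusion, but the route you take in your third step is genuinely different from the paper's. The paper follows \cite[Theorem 1.1]{KSY} closely: it introduces the symbol $\widetilde W(p)=p^{-1}\left(A+\sum_k\rho_kp^{\alpha_k}\right)^{-1}$, whose extra factor $p^{-1}$ yields the decay $|p|^{-1-\alpha_N}$ needed to define an operator family $\mathcal R_3$ by Fourier inversion along a vertical line (so that $\widehat{\mathcal R_3}(p)=\widetilde W(p)$ is guaranteed by the inversion theorem rather than computed), then sets $\tilde v=\partial_t(\mathcal R_3*\tilde F)$, so that $\widehat{\tilde v}(p)=p\widetilde W(p)\widehat F(p)=\left(A+\sum_k\rho_kp^{\alpha_k}\right)^{-1}\widehat F(p)$ comes for free, and finally recovers the explicit formula \eqref{fin} by differentiating under the contour integral --- this differentiation is precisely where the extra term $BF(t,\cdot)$ originates. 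You instead propose to Laplace-transform each term of \eqref{fin} directly, justifying the interchange of the $t$- and $p$-integrals by Fubini (which works: on $\gamma_\pm$ the double integral is controlled by $\int_\delta^\infty s^{-\alpha_N}(\re p+s|\cos\theta|)^{-1}ds<\infty$ using \eqref{l5a}) and evaluating $\frac{1}{2i\pi}\int_{\gamma(\delta,\theta)}(p-q)^{-1}\left(A+\sum_k\rho_kq^{\alpha_k}\right)^{-1}dq$ by closing the contour to the right, exactly as in Lemma \ref{l3} and in Step 3 of the proof of Theorem \ref{t1}; the arc contribution is $O(R^{-\alpha_N})$ and the residue at $q=p$ gives $\widehat{\mathcal R_1}(p)=\left(A+\sum_k\rho_kp^{\alpha_k}\right)^{-1}$, and similarly for $\mathcal R_0$. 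This is more elementary and self-contained (it avoids the temperate-distribution framework, the vertical-line construction and the appeal to Rudin's inversion theorem), it treats the $u_0$-term explicitly where the paper only sketches the case $u_0\equiv0$, and it absorbs Lemma \ref{l8} ($B\equiv0$) into the argument; what it does not explain is why the statement of Lemma \ref{l7} carries the term $BF$ at all, which in the paper is an artifact of the differentiation $\partial_t(\mathcal R_3*\tilde F)$. Do make sure, when writing this up, to also record the easy verifications of conditions (i)--(ii) of Definition \ref{d2} (polynomial growth of $v$ via Lemma \ref{l6}, and $\wh v(p,\cdot)\in H^1_0(\Omega)$ because the resolvent maps into $D(A)$), and note the slight tension between your second step (``repeat \cite{KSY} verbatim'') and your third step, which is in fact a different argument.
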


\begin{proof}  From now on, for any Banach space $Y$  we denote by $\cS'(\R_+;Y)$ the set of temperate distributions supported in $[0,+\infty)$ taking values in $Y$. Since the proof of this result is rather long and similar to \cite[Theorem 1.1.]{KSY}, we only give the main idea of its proof when $u_0\equiv0$.

In the first step of this proof we introduce the following family of operators acting in 
$L^2(\Omega)$,
$$ 
\widetilde{W}(p):=p^{-1}\left(A+\sum_{k=1}^N\rho_kp^{\alpha_k}\right)^{-1} ,\ p \in \mathbb \C 
\setminus \R_-.
$$
Combining Lemma \ref{l1} with the arguments used in \cite[Lemma 2.3.]{KSY}, we can define the map 
\bel{bm} 
\mathcal R_2(t):=  \frac{1}{2i\pi} \int_{i\infty}^{i\infty} e^{t p} \widetilde{W}(p+1) dp 
= \frac{1}{2 \pi} \int_{-\infty}^{+\infty} e^{ i t\eta} \widetilde{W}(1 + i \eta ) d \eta
, \quad t\in\R\ee
and show that $\mathcal R_2\in L^\infty(\R; \mathcal B(L^2(\Omega))) \cap \cS'(\R_+;\cB(L^2(\Omega)))$. Moreover, combining Lemma \ref{l1} with Theorem 19.2 and the following remark in \cite{Ru}, we deduce that  
$\widehat{\mathcal R_2}(p)=\widetilde{W}(p+1)$ for all $p\in\mathbb C_+$. 
As a consequence, the operator
$$
\mathcal R_3(t)=e^{t} \mathcal R_2(t) =\frac{1}{2i\pi} \int_{-i\infty}^{i\infty} e^{t (p+1)} 
\widetilde{W}(p+1) dp= \frac{1}{2i\pi} \int_{1-i\infty}^{1+i\infty} e^{t p} \widetilde{W}(p) dp,
\ t \in \R
$$ 
verifies
$\widehat{\mathcal R_3}(p)=\widehat{\mathcal R_2}(p-1)=\widetilde{W}(p)$ for all $p\in\{z\in\mathbb C;\ \re z  \in (1,+\infty) \}$.
Following \cite[Lemma 2.4.]{KSY}, we can prove that

\bel{t6a}
\mathcal R_3(t)=\frac{1}{2 i \pi} \int_{\gamma(\delta,\theta)} e^{t p} \widetilde{W}(p) dp,\ t 
\in \R_+,
\ee
and $\mathcal R_3\in\cS'(\R_+;\cB(L^2(\Omega)))\cap L^1_{loc}(\R_+;\cB(L^2(\Omega)))$. Using the fact that $\widehat{\mathcal R_3}(p)=\widetilde{W}(p)$ for all $p\in\{z\in\mathbb C;\ \re z  \in (1,+\infty) \}$, we deduce that 
\bel{lap11}
\widehat{\mathcal R_3\psi}(p)= \widetilde{W}(p)\psi,\ p \in \C_+,\ \psi\in L^2(\Omega).
\ee

We denote by $\tilde{F}$  the extension of a function $F$ by $0$ on 
$( \Omega\times \R) \setminus (\Omega\times\R_+ )$. Consider the convolution in time of $S_2$ with $\tilde{F}$ given by
$$ 
(\mathcal R_3*\tilde{F})(x,t) =\int_0^t \mathcal R_3(t-s) F(s,x) \mathds{1}_{\R_+}(s) ds.
\ (t,x) \in \R\times\Omega  , 
$$
We show that   $\mathcal R_3*\tilde{F}\in \cS'(\R_+;L^2(\Omega))$ and 
$$
\widehat{\mathcal R_3*\tilde{F}}(p)=\widehat{\mathcal R_3}(p) \widehat{\tilde{F}}(p),\ 
p\in\mathbb C_+,
$$
with $\widehat{\mathcal R_3}(p)=\int_0^{+\infty} \mathcal R_3(t)e^{-pt}dt$ and $\widehat{\tilde{F}}(p)
= \int_0^{+\infty} F(t)e^{-pt}dt$.
Thus, setting $\tilde{v}:=\pd_t(\mathcal R_3*\tilde{F})\in \cS'(\R_+;L^2(\Omega))$, 
we derive from \eqref{lap11} that
$$
\widehat{\tilde{v}}(p) = p\widehat{\mathcal R_3*\tilde{F}}(p) = p\widehat{\mathcal R_3}(p)\widehat{\tilde{F}}(p)=\left(A+\sum_{k=1}^N\rho_kp^{\alpha_k}\right)^{-1} \widehat{\tilde{F}}(p),\ p \in \C_+.
$$
Therefore, the proof will be completed if we show that $\tilde{v}=v$ with $v$ given by \eqref{fin}. For this purpose, applying \eqref{l6a}, we deduce that
$$
(\mathcal R_3*\tilde{F})(t)=\frac{1}{2 i \pi} \int_{\gamma(\delta,\theta)} g(t,p) 
dp,\ t \in \R_+ 
$$
with
\bel{a15b} 
g(t,p):= \int_0^te^{(t-s)p} p^{-1}\left(A+\sum_{k=1}^N\rho_kp^{\alpha_k}\right)^{-1}
\tilde{F}(s,\cdot)ds,\  p \in \gamma(\delta,\theta).
\ee
Therefore, for a.e. $t \in \R_+$ and all $p \in  \gamma(\delta,\theta)$, 
we have 
\beas
\pd_t g(t,p)
& =& \int_0^te^{(t-s)p} \left(A+\sum_{k=1}^N\rho_kp^{\alpha_k}\right)^{-1}
\tilde{F}(s,\cdot)ds 
+ p^{-1}\left(A+\sum_{k=1}^N\rho_kp^{\alpha_k}\right)^{-1}
\tilde{F}(t,\cdot),
\eeas
and consequently
$$
\norm{\pd_t g(t,p)}_{L^2(\Omega)}
\leq \norm{\left(A+\sum_{k=1}^N\rho_kp^{\alpha_k}\right)^{-1}}_{\cB(L^2(\Omega))} 
\left(  \int_0^t e^{s\re p} ds +| p |^{-1} \right) \norm{\tilde{F}}
_{L^\infty(\R;L^2(\Omega))}.
$$
From this and \eqref{l5a}, it follows that
$$
\norm{\pd_t g(t,p)}_{L^2(\Omega)}
\leq  C| p |^{-(1+ \alpha_N)} \norm{\tilde{F}}
_{L^\infty(\R;L^2(\Omega))}=C| p |^{-(1+ \alpha_N)} \norm{F}
_{L^\infty(\R_+;L^2(\Omega))}. 
$$
As a consequence, the mapping $p \mapsto \pd_t g(t,p) \in 
L^1(\gamma(\delta,\theta);L^2(\Omega))$ for any fixed $t \in \R_+$ 
and we have $\tilde{v}(t)=\partial_t[\mathcal R_3*\tilde{F}](t)=\frac{1}{2 i \pi} 
\int_{\gamma(\delta,\theta)} \pd_t g(t,p) dp$, or equivalently
$$
\tilde{v}(\cdot,t)=\frac{1}{2 i \pi} \int_{\gamma(\delta,\theta)}\left(\int_0^t 
e^{(t-s)p} \left(A+\sum_{k=1}^N\rho_kp^{\alpha_k}\right)^{-1}F(s)ds+p^{-1}\left(A+\sum_{k=1}^N\rho_kp^{\alpha_k}\right)^{-1}F(t)\right)dp
$$
in virtue of \eqref{a15b}. Now, applying the Fubini theorem to the right-hand 
side of the above identity, we obtain that $\tilde{v}=v$ with $v$ given by \eqref{fin}. Combining this with Lemma \ref{l6}, we deduce that $v\in L^1_{loc}(\R_+;L^2(\Omega))$ and it is the unique Laplace-weak solution of \eqref{eq1} in the sense of Definition \ref{d2}.
\end{proof}

We can extend the result of Lemma \ref{l7}  as follows.

\begin{lem}\label{l8}  Let $u_0\in L^2(\Omega)$ and $F\in L^\infty(\R_+;L^2(\Omega))$. Then, the function $u$ given by \eqref{mul} is the unique   Laplace weak solution of \eqref{eq1}.
\end{lem}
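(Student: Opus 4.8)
The plan is to deduce Lemma \ref{l8} from Lemma \ref{l7} by showing that the operator $B$ is identically zero, so that the two representation formulas \eqref{fin} and \eqref{mul} coincide. Indeed, for the same data $u_0$ and $F$, formula \eqref{fin} differs from \eqref{mul} only through the additional term $BF(t,\cdot)$; since $B\in\mathcal B(L^2(\Omega))$, once we know $B=0$ we obtain $u\equiv v$ on $\R_+$, and Lemma \ref{l7} immediately yields that the function $u$ given by \eqref{mul} is the unique Laplace-weak solution of \eqref{eq1}.

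It therefore remains to prove that $Bh=0$ for every $h\in L^2(\Omega)$. This will follow from a contour-shifting argument identical in spirit to the one used to establish \eqref{t1h} in Step 3 of the proof of Theorem \ref{t1}. Fix $h\in L^2(\Omega)$ and consider the $\mathcal B(L^2(\Omega))$-valued map $p\mapsto p^{-1}(A+\sum_{k=1}^N\rho_kp^{\alpha_k})^{-1}$, which is holomorphic on $\mathbb C\setminus(-\infty,0]$. By Lemma \ref{l5}, it obeys the bound
\begin{equation*}
\left\|p^{-1}\left(A+\sum_{k=1}^N\rho_kp^{\alpha_k}\right)^{-1}\right\|_{\mathcal B(L^2(\Omega))}\leq C|p|^{-1-\alpha_N},\quad p\in\gamma(\delta,\theta).
\end{equation*}
Fixing $\delta<1$ and $R>1$ and using the truncated contour $\gamma(\delta,R,\theta)$ from \eqref{cont2}, I would note that the closed region enclosed between $\gamma(\delta,R,\theta)$ and the outer arc $\gamma_0(R,\theta)$ (the arc \eqref{g2} with $\delta=R$) stays at distance $\delta>0$ from the origin, so the integrand is holomorphic there. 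The Cauchy theorem then gives
\begin{equation*}
\frac{1}{2i\pi}\int_{\gamma(\delta,R,\theta)}p^{-1}\left(A+\sum_{k=1}^N\rho_kp^{\alpha_k}\right)^{-1}h\,dp=\frac{1}{2i\pi}\int_{\gamma_0(R,\theta)}p^{-1}\left(A+\sum_{k=1}^N\rho_kp^{\alpha_k}\right)^{-1}h\,dp.
\end{equation*}

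The decisive point is that the right-hand arc integral vanishes in the limit $R\to+\infty$: on $\gamma_0(R,\theta)$ one has $|p|=R$, so the bound above together with the arc length $2\theta R$ yields a control of the form $C\,R^{-\alpha_N}\|h\|_{L^2(\Omega)}$, which tends to $0$ because $\alpha_N>0$. Since the same decay $|p|^{-1-\alpha_N}$ renders the integral over the full contour $\gamma(\delta,\theta)$ absolutely convergent, the left-hand side converges to $Bh$ as $R\to+\infty$, whence $Bh=0$. As $h\in L^2(\Omega)$ is arbitrary we conclude $B=0$, and combining this with Lemma \ref{l7} completes the proof. I do not expect any genuine obstacle here: the only things to watch are that the enclosed sector annulus avoids the simple pole of the integrand at $p=0$ (guaranteed by $\delta>0$, in contrast with the situation of Lemma \ref{l3} where that pole is precisely what is captured) and that the extra factor $|p|^{-\alpha_N}$ supplied by Lemma \ref{l5}, beyond the $|p|^{-1}$ already present, is what forces the outer arc to die as $R\to+\infty$.
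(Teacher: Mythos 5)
Your argument is correct and is essentially identical to the paper's own proof: both reduce Lemma \ref{l8} to Lemma \ref{l7} by showing $B=0$, using the Cauchy theorem on the truncated contour $\gamma(\delta,R,\theta)$ to replace the integral by one over the outer arc $\gamma_0(R,\theta)$, and then invoking the resolvent bound of Lemma \ref{l5} to show that the arc contribution is $O(R^{-\alpha_N})$ and hence vanishes as $R\to+\infty$. No issues.
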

\begin{proof} According to Lemma \ref{l6} and \ref{l7}, we only need to  show that here the map $B$ appearing in Lemma \ref{l6} will be equal to zero.  
To see this let us observe that, fixing $\delta<1$ and applying the Cauchy formula,  for any  $R>1$ and $h\in L^2(\Omega)$, we have
$$\frac{1}{2i\pi}\int_{\gamma(\delta,R,\theta)}p^{-1}\left(A+\sum_{k=1}^N\rho_kp^{\alpha_k}\right)^{-1}h d p=\frac{1}{2i\pi}\int_{\gamma_0(R,\theta)}p^{-1}\left(A+\sum_{k=1}^N\rho_kp^{\alpha_k}\right)^{-1}h d p$$
with
$$\gamma(\delta,R,\theta):=\gamma_-(\delta,R,\theta)\cup\gamma_0(\delta,\theta)\cup\gamma_+(\delta,R,\theta)$$
oriented in the counterclockwise direction, where
\[\gamma_\pm(\delta,R,\theta):=\{r e^{\pm i\theta}:\  r\in(\delta,R)\}.
\]
Sending $R\to+\infty$, we obtain 
$$Bh=\lim_{R\to+\infty}\frac{1}{2i\pi}\int_{\gamma_0(R,\theta)}p^{-1}\left(A+\sum_{k=1}^N\rho_kp^{\alpha_k}\right)^{-1}h d p.$$
On the other hand applying Lemma \ref{l1}, we deduce that
$$\begin{aligned}&\norm{\frac{1}{2i\pi}\int_{\gamma_0(R,\theta)}p^{-1}\left(A+\sum_{k=1}^N\rho_kp^{\alpha_k}\right)^{-1}h d p}_{L^2(\Omega)}\\
&\leq C\int_{-\theta}^{\theta}\norm{\left(A+\sum_{k=1}^N\rho_k(Re^{i\beta})^{\alpha_k}\right)^{-1}}_{\mathcal B(L^2(\Omega))}\norm{h}_{L^2(\Omega)} d \beta\\
\ &\leq CR^{-\alpha_N}\norm{h}_{L^2(\Omega)}.\end{aligned}$$
Therefore, we have
$$Bh=\lim_{R\to+\infty}\frac{1}{2i\pi}\int_{\gamma_0(R,\theta)}p^{-1}\left(A+\sum_{k=1}^N\rho_kp^{\alpha_k}\right)^{-1}h d p\equiv0.$$
\end{proof}

Combining Lemma \ref{l6}, \ref{l8} with the density arguments used in Proposition \ref{p1}, we obtain the following results about the unique existence of Laplace-weak solutions for \eqref{eq1}.

\begin{prop}\label{p3} Assume that the conditions \eqref{ell}-\eqref{eq-rho} are fulfilled.
Let $u_0\in L^2(\Omega)$, $F\in\mathcal J$,  $1<\alpha_1<\ldots<\alpha_N<1$, $\rho_j\in L^\infty(\Omega)$, $j=1,\ldots,N$, satisfy \eqref{eq-rho} with $\rho=\rho_j$, and let $K$ be given by \eqref{Kmultiple}. Then there exists a unique Laplace-weak
solution $u\in L^1_{loc}(\R_+;L^2(\Omega))$ to \eqref{eq1} given by \eqref{mul}.
\end{prop}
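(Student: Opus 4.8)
The plan is to transcribe the density argument of Proposition \ref{p1}, now driving it with Lemma \ref{l6} and Lemma \ref{l8} in place of Lemma \ref{l1} and \cite[Theorem 1.1]{KSY}. By linearity of the boundary value problem \eqref{d2a} in the pair $(F,u_0)$, Lemma \ref{l8} already settles the case $F\in L^\infty(\R_+;L^2(\Omega))$ and, in particular, fully accounts for the contribution $\mathcal R_0(t)u_0$ of the initial datum, which does not depend on $F$. Hence it suffices to treat $u_0\equiv0$ and to extend the representation $u(t,\cdot)=\int_0^t\mathcal R_1(t-\tau)F(\tau,\cdot)\,d\tau$ from $F\in L^\infty(\R_+;L^2(\Omega))$ to $F\in\mathcal J$; the general solution is then recovered by adding back the $u_0$-part provided by Lemma \ref{l8}.

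First I would set up the approximation exactly as in Proposition \ref{p1}. Writing $G(t,x)=(1+t)^{-J}F(t,x)\in L^1(\R_+;L^2(\Omega))$, I would choose a sequence $(G_n)\subset\mathcal C^\infty_0(\R_+\times\Omega)$ converging to $G$ in $L^1(\R_+;L^2(\Omega))$ and set $F_n(t,x)=(1+t)^JG_n(t,x)\in\mathcal C^\infty_0(\R_+\times\Omega)\subset L^\infty(\R_+;L^2(\Omega))$, so that $\|(1+t)^{-J}(F_n-F)\|_{L^1(\R_+;L^2(\Omega))}\to0$. Defining $u_n(t,\cdot)=\int_0^t\mathcal R_1(t-\tau)F_n(\tau,\cdot)\,d\tau$ and $u$ likewise, I would invoke estimate \eqref{6b} — noting that $\alpha_N<1$ makes $t^{\alpha_N-1}$ locally integrable at the origin — together with Young's convolution inequality to prove that for every $p\in\mathbb C_+$ the Laplace transform $\wh u(p)$ is well defined in $L^2(\Omega)$ and that $\|\wh{u_n}(p)-\wh u(p)\|_{L^2(\Omega)}\to0$.

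Next I would use Lemma \ref{l8}: since each $F_n$ lies in $L^\infty(\R_+;L^2(\Omega))$ and $u_0\equiv0$, the function $u_n$ is the Laplace-weak solution and therefore solves \eqref{d2a}, i.e.
\[
\left(A+\sum_{k=1}^N\rho_kp^{\alpha_k}\right)\wh{u_n}(p,\cdot)=\wh{F_n}(p,\cdot),\quad p\in\mathbb C_+.
\]
Exactly as in Proposition \ref{p1}, the bound $\sup_{t\in\R_+}(1+t)^Je^{-\re p\,t}<\infty$ combined with $\|(1+t)^{-J}(F_n-F)\|_{L^1}\to0$ yields $\wh{F_n}(p)\to\wh F(p)$ in $L^2(\Omega)$. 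The uniform resolvent bound of Lemma \ref{l5} then makes $\left(A+\sum_{k=1}^N\rho_kp^{\alpha_k}\right)^{-1}$ a fixed bounded operator, so I can pass to the limit and conclude $\wh u(p)=\left(A+\sum_{k=1}^N\rho_kp^{\alpha_k}\right)^{-1}\wh F(p)\in H^1_0(\Omega)$, which is precisely \eqref{d2a}. Adding the $u_0$-contribution of Lemma \ref{l8} shows that $u$ given by \eqref{mul} is a Laplace-weak solution.

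Finally, uniqueness follows as before: the invertibility of $A+\sum_{k=1}^N\rho_kp^{\alpha_k}$ on $L^2(\Omega)$ granted by Lemma \ref{l5} forces any two Laplace-weak solutions to share the same Laplace transform on a right half-plane, whence they coincide by analyticity and injectivity of the Laplace transform. I do not expect a genuine obstacle here, as the argument is a routine adaptation of Proposition \ref{p1}; the only points requiring care are the bookkeeping of the multiterm symbol $\sum_{k=1}^N\rho_kp^{\alpha_k}$ and checking that the decay $|z|^{-\alpha_N}$ from Lemma \ref{l5}, together with $\alpha_N<1$, suffices both for the local integrability of $\mathcal R_1$ near $t=0$ and for the limit passage in $\wh{u_n}(p)$.
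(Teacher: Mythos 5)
Your proposal is correct and follows essentially the same route as the paper, which proves Proposition \ref{p3} precisely by combining Lemma \ref{l6} and Lemma \ref{l8} with the density argument of Proposition \ref{p1}; your write-up simply fills in the details of that combination (reduction to $u_0\equiv 0$ by linearity, approximation of $F\in\mathcal J$ by $F_n=(1+t)^JG_n$, passage to the limit in \eqref{d2a} via the resolvent bound of Lemma \ref{l5}, and uniqueness from invertibility plus analyticity of the Laplace transform). No gaps.
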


Combining Lemma \ref{l6}, Proposition \ref{p3} and mimicking the proof of Theorem \ref{t1}, we deduce Theorem \ref{t3}.

\section{Weak solution at finite time}

In a similar way to \cite{KSY,KY1,KY2,LKS},  following Definition \ref{d1} of weak solutions of \eqref{eq1}, we give the definition of weak solutions of the same problem at finite time. Namely, for $T>0$, let us consider the IBVP
\bel{eqq1}
\left\{ \begin{array}{rcll} 
(\rho(x) \partial^{K}_t+\cA ) v(t,x) & = & G(t,x), & (t,x)\in 
(0,T)\times\Omega,\\
 v(t,x) & = & 0, & (t,x) \in (0,T)\times\partial\Omega, \\  
 v(0,x) & = & u_0(x), & x \in \Omega,\ .
\end{array}
\right.
\ee
We give the following definition of weak solutions of \eqref{eqq1}.

\begin{defn}\label{d3} Let $F$ be the extension of the function $G$ by zero to $\R_+\times\Omega$. Then we call  weak solution of \eqref{eqq1}
the restriction on $(0,T)\times\Omega$ of the weak solution $u$ of the IBVP \eqref{eq1} in the sense of Definition \ref{d1}.
\end{defn}

Notice that, according to Definition \ref{d1}, any weak solutions $v$ of \eqref{eqq1} satisfies the following properties:\\
1) $v\in L^1(0,T;L^2(\Omega))$ and the identity
\begin{equation}\label{d3a}
\rho( x)D_t^K[v-u_0](t, x)+\mathcal A v(t, x)=G(t,x),\quad  x\in\Omega,\ t\in(0,T).
\end{equation}
holds true in the sense of distributions in $\R_+\times\Omega$.\\
2) We have $I_K [v-u_0]\in W^{1,1}(0,T;D'(\Omega))$ and the following initial condition
\begin{equation}\label{d3b}
 I_K[v-u_0](0, x)=0,\quad  x\in\Omega,
\end{equation}
is fulfilled. Moreover, applying the result of Theorem \ref{t1}, \ref{t2} and \ref{t3} we can show the unique existence of weak solution of \eqref{eqq1}. Let us also observe that the Definition \ref{d3} of weak solutions depends on the final time $T$. Nevertheless, we can show that  the unique weak solution of \eqref{eqq1} in the sense of Definition \ref{d3} is independent of  $T$ and by the same way of the extension of the source term $G$ under consideration in Definition \ref{d3}. All these properties can be sum up as follows.

\begin{thm}\label{t4} Assume that the condition of Theorem \ref{t1}, \ref{t2} and \ref{t3} be fulfilled and assume that the weight $K$ is given by \eqref{Kvariable} or \eqref{Kdistributed} or \eqref{Kmultiple}. Then for any $G\in L^1(0,T;L^2(\Omega))$ and $u_0\in L^2(\Omega)$, the IBVP \eqref{eqq1} admits a unique solution $v\in L^1(0,T;L^2(\Omega))$ in the sense of Definition \ref{d3}. Moreover, the unique weak solution of \eqref{eqq1} have a Duhamel type of representation given by: 
$$1)\quad v(t,\cdot)=S_{0}(t)u_0+ \int_0^t S_{1}(t-s) G(s,\cdot) ds,\quad t\in(0,T)$$
when $K$ is given by \eqref{Kvariable}. Here $S_0$ \emph{(}resp. $S_1$\emph{)} is defined by \eqref{S0} \emph{(}resp. \eqref{S1}\emph{)}.
$$2)\quad v(t,\cdot)=S_{0,\mu}(t)u_0+ \int_0^t S_{1,\mu}(t-s) G(s,\cdot) ds,\quad t\in(0,T)$$
when $K$ is given by \eqref{Kdistributed}. Here $S_{0,\mu}$ \emph{(}resp. $S_{1,\mu}$\emph{)} is defined by \eqref{Smu0}  \emph{(}resp. \eqref{Smu1}\emph{)}.
$$3)\quad v(t,\cdot)=\mathcal R_0(t)u_0+ \int_0^t \mathcal R_1(t-s) G(s,\cdot) ds,\quad t\in(0,T)$$
when $K$ is given by \eqref{Kmultiple}. Here $\mathcal R_0$ \emph{(}resp. $\mathcal R_1$\emph{)} is defined by \eqref{R0}  \emph{(}resp. \eqref{R1}\emph{)}.\\
Finally, the solution of  the IBVP \eqref{eqq1} in the sense of Definition \ref{d3} is independent of the choice of the final time $T$.
\end{thm}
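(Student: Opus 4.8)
The plan is to reduce the finite-time problem entirely to the infinite-time theory established in Theorems \ref{t1}, \ref{t2} and \ref{t3}, and then to extract both the Duhamel representation and the independence of $T$ from the causal structure of the time convolution. First I would settle existence and uniqueness. Given $G\in L^1(0,T;L^2(\Omega))$, let $F$ denote its extension by zero to $\R_+\times\Omega$, as prescribed in Definition \ref{d3}. Then $F\in L^1(\R_+;L^2(\Omega))$, hence $F\in\mathcal J$, so the appropriate one of Theorems \ref{t1}--\ref{t3} applies and produces a unique weak solution $u\in L^1_{loc}(\R_+;L^2(\Omega))$ of \eqref{eq1} in the sense of Definition \ref{d1}. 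By Definition \ref{d3} the weak solution $v$ of \eqref{eqq1} is exactly the restriction $u|_{(0,T)\times\Omega}$, so its existence and uniqueness in $L^1(0,T;L^2(\Omega))$ follow at once from those of $u$.

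Next I would obtain the Duhamel formulas. According to the relevant theorem, $u$ coincides with the representation \eqref{sol1}, \eqref{di1} or \eqref{mul}. Since $F=G$ on $(0,T)\times\Omega$ and, for $t\in(0,T)$, the Duhamel integral $\int_0^tS_1(t-\tau)F(\tau,\cdot)\,d\tau$ (and its analogues with $S_{1,\mu}$ and $\mathcal R_1$) involves only the values $F(\tau,\cdot)$ with $\tau\in(0,t)\subset(0,T)$, each occurrence of $F$ may be replaced by $G$. Restricting to $t\in(0,T)$ then yields precisely the three asserted formulas.

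The conceptual core is the independence of $v$ with respect to $T$ and to the choice of extension, and this is where I expect the only genuine work. The mechanism is causality: for $t\in(0,T)$ the value $v(t,\cdot)$ depends on $G$ only through $G|_{(0,t)}$. Concretely, for $T_1<T_2$ and $G\in L^1(0,T_2;L^2(\Omega))$, let $u_1,u_2$ be the infinite-time solutions associated with the zero-extensions of $G|_{(0,T_1)}$ and of $G$. These two extensions agree on $(0,T_1)$, so for every $t\in(0,T_1)$ the corresponding convolution terms coincide, whence $u_1=u_2$ on $(0,T_1)\times\Omega$; restricting gives the independence of the final time. Running the same comparison for two different extensions of a single $G$ that both lie in $\mathcal J$ and agree on $(0,T)$ yields independence of the extension as well.

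The routine parts are thus the existence–uniqueness reduction and the restriction of the Duhamel formula; the hard part, such as it is, is the causality comparison. Its one real prerequisite is that the unique weak solution of Definition \ref{d1} actually coincides with the explicit Duhamel representation — which is exactly the equivalence content of Theorems \ref{t1}--\ref{t3}. Once this identification is in hand, causality is immediate from the structure of the time convolution, and the only point requiring care is to confirm that each candidate extension of $G$ belongs to $\mathcal J$ so that the infinite-time theory remains applicable. Without this identification one would instead have to establish causality directly from the distributional formulation \eqref{d1a}--\eqref{d1b}, which is substantially less transparent.
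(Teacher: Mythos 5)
Your proposal is correct and follows essentially the same route as the paper: existence, uniqueness and the Duhamel formulas are obtained by extending $G$ by zero (so that the extension lies in $\mathcal J$) and invoking Theorems \ref{t1}--\ref{t3}, and the independence of $T$ is read off from the causal structure of the convolution in the representations \eqref{sol1}, \eqref{di1}, \eqref{mul}. The paper's proof is exactly this comparison of $v_1$ and $v_2$ on $(0,T_1)$ via the Duhamel formulas, so no further comment is needed.
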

\begin{proof} The proof the first two claims of this theorem are a direct consequence of Theorem \ref{t1}, \ref{t2} and \ref{t3} and the discussion in Section 2, 3, 4 for the representation of solutions. Therefore, we only need to prove that the unique solution  of  the IBVP \eqref{eqq1} in the sense of Definition \ref{d3} is independent of the choice of the final time $T$. For this purpose, let us consider $T_1<T_2$ and $G\in L^1(0,T_2;L^2(\Omega))$. For $j=1,2$, consider $v_j$ the weak solution of the IBVP \eqref{eqq1} with $T=T_j$ in the sense of Definition \ref{d3}. In order to prove that the solutions of \eqref{eqq1} in the sense of Definition \ref{d3} are independent of $T$, we need  to show that the restriction of $v_2$ to $(0,T_1)\times\Omega$ coincides with $v_1$.
In view of the first claims of the theorem, one of the following identities hold true:
$$1)\quad v_j(t,\cdot)=S_{0}(t)u_0+ \int_0^t S_{1}(t-s) G(s,\cdot) ds,\quad t\in(0,T_j),$$
$$2)\quad v_j(t,\cdot)=S_{0,\mu}(t)u_0+ \int_0^t S_{1,\mu}(t-s) G(s,\cdot) ds,\quad t\in(0,T_j),$$
$$3)\quad v_j(t,\cdot)=\mathcal R_0(t)u_0+ \int_0^t \mathcal R_1(t-s) G(s,\cdot) ds,\quad t\in(0,T_j).$$
Thus, in each case we deduce that
$$v_1(t,x)=v_2(t,x),\quad t\in(0,T_1),\ x\in\Omega.$$
This shows that the unique solution  of  the IBVP \eqref{eqq1} in the sense of Definition \ref{d3} is independent of the choice of the final time $T$.\end{proof}

\section*{Acknowledgments}

 This work was supported by  the French National Research Agency ANR (project MultiOnde) grant ANR-17-CE40-0029.
 
\end{document}